\newenvironment{customthm}[1]
  {\innercustomthm}
  {\endinnercustomthm}
\newtheorem{theorem}{Theorem}[subsection]
\newtheorem{corollary}[theorem]{Corollary}
\newtheorem{lemma}[theorem]{Lemma}
\newtheorem{proposition}[theorem]{Proposition}
\theoremstyle{definition}
\newtheorem{definition}[theorem]{Definition}
\theoremstyle{definition}
\newtheorem{remark}[theorem]{Remark}
\newtheorem{example}[theorem]{Example}
\definecolor{winered}{rgb}{0.5,0,0}
\title{Cyclic homology of categorical coalgebras and the free loop space}
\author{Manuel Rivera, Daniel Tolosa}
\begin{document}

\singlespacing

\maketitle
\begin{abstract}{We prove that the cyclic chain complex of the categorical coalgebra of singular chains on an arbitrary topological space $X$ is naturally quasi-isomorphic to the $S^1$-equivariant chains of the free loop space of $X$. This statement does not require any hypotheses on $X$ or on the commutative ring of coefficients. Along the way, we introduce a family of polytopes, coined as Goodwillie polytopes, that controls the combinatorics behind the relationship of the coHochschild complex of a categorical coalgebra and the Hochschild complex of its associated differential graded category.}
\end{abstract}
\section{Introduction}

Given any topological space $X$, its \textit{free loop space} $\mathcal{L}X$ is the space of all continuous maps from the circle $S^1$ to $X$. Rotation of loops endows $\mathcal{L}X$ with a continuous action of the circle group. Furthermore, evaluating loops at a fixed point on the circle gives rise to a fibration $e \colon \mathcal{L}X \to X$. The fiber $e^{-1}(b)= \Omega_bX$ over any $b \in X$, called the space of \textit{based loops at $b$}, comes with a topological monoid structure given by concatenation of loops. All of this structure is invariant under weak homotopy equivalence; namely, a weak homotopy equivalence $X \to X'$ induces a weak homotopy equivalence $\mathcal{L}X \to \mathcal{L}X'$ preserving every part of the structure. A natural question is how the algebraic topology of different parts of this structure are related. For instance, the (co)homology groups of $X$, $\mathcal{L}X$ and $\Omega_bX$ are related through the Serre spectral sequence. 

In this article, we prove that the singular chains on $\mathcal{L}X$, with coefficients in a fixed commutative ring $R$, together with its cyclic symmetry induced by rotation of loops can be derived directly, up to quasi-isomorphism, from the coassociative coalgebra structure of the normalized singular chains on $X$ suitably considered. In particular, we show the $S^1$-equivariant homology of $\mathcal{L}X$ can be determined functorially from the coalgebra of singular chains on $X$. This is achieved by considering the chains on $X$ as a \textit{categorical coalgebra}, applying a version of the \textit{coHochschild complex}, and taking its associated chain complex of \textit{cyclic chains}. This builds upon and extends previous results relating the coalgebra of chains on a pointed space $(X,b)$ to the algebra of chains on $\Omega_bX$ and the latter to the chains on the free loop space \cite{rivera2018cubical, rivera2023algebraic, Goodwillie, jones1987cyclic, adams}. Our result assumes no connectivity hypotheses on $X$, choice of base-point, or restrictions on the commutative ring $R$.

We describe the above result in more detail. A categorical coalgebra consists of a non-negatively graded $R$-coalgebra with a set of ``objects" (in degree $0$) together with a degree $-1$ boundary map that is a coderivation of the coproduct and for which every degree $1$ element is a cycle. The boundary map is not assumed to square to zero, but its failure is controlled by a curvature term that is regarded as part of the structure (\ref{categoricalcoalgebra}). This notion is dual in a precise sense to that of a \textit{differential (non-negatively) graded $R$-category}. In fact, any categorical coalgebra $C$ gives rise to a differential graded (dg) category $\mathbf{\Omega}C$ by applying an appropriate version of the \textit{cobar functor} (\ref{cobarfunctor}). The idea is that morphisms in $\mathbf{\Omega}C$ are chain complexes generated by ordered sequences of ``composable" elements in $C$ with differential determined by the coproduct, boundary, and curvature of $C$. For any space $X$, the normalized singular chains $\mathcal{C}(X)$ with \textit{Alexander-Whitney diagonal approximation} as coproduct may be regarded as a categorical coalgebra by slightly modifiying the usual boundary map (\ref{categoricalchains}). The dg category $\mathbf{\Omega}(\mathcal{C}(X))$ is quasi-equivalent to the dg category whose objects are the points of $X$ and morphisms are the chains on the space of paths with fixed endpoints with composition induced by path concatenation (\ref{adamsthm}).

We describe how any categorical coalgebra $C$ also gives rise to a \textit{mixed complex}, denoted by $(\textnormal{coHoch}_\bullet(C), d, P)$,  through an appropriate version of the coHochschild complex (\ref{cohochschild}, \ref{Hochasmixedcomplex}). Here $d$ is a differential of degree $-1$ and $P$ a differential of degree $+1$ compatible with $d$, which we may think of as encoding cyclic symmetry. Dually, any dg category $A$ gives rise to a mixed complex $(\textnormal{Hoch}_\bullet(A), \delta, B)$ given by the classical Hochschild complex equipped with \textit{Connes' operator} $B$.  Another example of a mixed complex is $(C_\bullet(\mathcal{L}X), \partial, \mathcal{R})$, the complex of normalized singular chains on $\mathcal{L}X$ with boundary operator $\partial$ and degree $+1$ differential $\mathcal{R}$ arising from the circle action on $\mathcal{L}X$ (\ref{chainsonLX}). Furthermore, any mixed complex gives rise to different versions of cyclic chains computing either \textit{positive, negative, or periodic cyclic homology} (\ref{3versions}). 
Our result can now be stated more precisely as follows.

\begin{customthm}{1}[\ref{maintheorem}] \label{thm1}
For any topological space $X$, there is a zig-zag of natural quasi-isomorphisms of mixed complexes
    \begin{eqnarray}\label{zigzag}
    (\textnormal{coHoch}_\bullet (\mathcal{C}(X)), d,P) \xleftarrow{\simeq} (\textnormal{Hoch}_\bullet( \mathbf{\Omega} \mathcal{C}(X)),\delta,B) \xrightarrow{\simeq} (C_\bullet(\fancy{L}X), \partial, \mathcal{R}).
    \end{eqnarray} 
    As a consequence, the positive cyclic homology of the categorical coalgebra $\mathcal{C}(X)$ is naturally isomorphic to $H^{S^1}_\bullet(\mathcal{L}X;R)$, the $S^1$-equivariant homology of $\mathcal{L}X$. 
\end{customthm}
The middle object in the above zig-zag is a ``thicker" model which is less suitable for computations but has more ``space" for compatibilities to hold strictly. In the course of proving the above theorem, we discovered a combinatorial picture explaining this ``thickening". This may be traced down to the following key observation. Recall that any dg category $A$ (that is appropriately $R$-cofibrant) may be resolved as an $A$-bimodule via a version of the two-sided bar resolution $\mathcal{B}(A,A,A)$, where ``free copies of $A$" are indexed by monomials of morphisms in $A$.  When $A=\mathbf{\Omega}C$, for a categorical coalgebra $C$, there is a smaller resolution $\mathcal{Q}(A,C,A)$ where free copies of $A$ are now indexed by elements of $C$ instead of being indexed by monomials of morphisms in $\mathbf{\Omega}C$, as in the bar resolution, which is considerably bigger (\ref{comparingresolutions}). When $C=\mathcal{C}(X)$ and $A=\mathbf{\Omega}C$, we explain in Theorem \ref{thmresolutions} how the natural quasi-isomorphism 
\begin{eqnarray} \label{resolutions}
    \mathcal{Q}(A,C,A) \xrightarrow{\simeq} \mathcal{B}(A,A,A),
\end{eqnarray}
which may be thought of as a manifestation of \textit{dg Koszul duality}, corresponds to a geometric map between two different models for the normalized singular chains on the space of paths in $X$. This  map is described in terms of the combinatorics of a family of polytopes (of independent interest), which we coined as \textit{Goodwillie polytopes} (because of their relation with a construction in \cite{Goodwillie}) and are closely related to \textit{freehedra} \cite{saneblidze, rivera-saneblidze,poliakova2020cellular} (\ref{gpolytopes}). As a consequence, we obtain a geometric picture for a chain homotopy inverse of the first quasi-isomorphism in \ref{zigzag} (\ref{freeloopsmodel}).

\subsection*{Motivation} This article is motivated by string topology of manifolds. When $X$ is a manifold, the $S^1$-equivariant homology of $\mathcal{L}X$ relative to constant loops, $H^{S^1}_\bullet(\mathcal{L}X,X)$, carries an involutive Lie bialgebra structure defined by transversally intersecting, cutting, and concatenating families of ``strings" (or $S^1$-equivairant loops) in $X$. This structure should lift to an $\textnormal{IBL}_{\infty}$-algebra (a homotopy coherent version of an involutive Lie bialgebra) at the chain level, which, based on  the results of \cite{naef,NRW}, should be a source of potentially interesting manifold invariants. \textit{Our results are motivated by constructing a small, tractable, and general model for the $S^1$-equivariant chains of $\mathcal{L}X$, based on a decomposition of $X$ into cells such as simplices or cubes, that makes transparent how geoemtric structures on $X$ manifest at the level of strings.} We conjecture that, when $X$ is an oriented smooth manifold, a relative version of the cyclic chains associated to the mixed complex $(\textnormal{coHoch}_\bullet (\mathcal{C}(X)), d,P)$ carries an $\textnormal{IBL}_{\infty}$-algebra structure lifting the string topology Lie bialgebra and extending the structures constructed in \cite{CHV} for simply connected closed manifolds and $R=\mathbb{R}$ to the more general setting of non-simply connected manifolds and arbitrary coefficients.

\subsection*{Organization}
We start the article by discussing in section \ref{polytopes} two families of polytopes that will appear when comparing different models for path spaces and loop spaces. This part of the story is self-contained and of independent interest. We proceed by discussing generalities about categorical coalgebras and dg categories in section \ref{categoricalcoalgebrasanddgcategories}. In section \ref{chainmodels}, we describe a geometric/combinatorial picture for \ref{resolutions} in terms of the polytopes introduced in section \ref{polytopes}. Finally, in section \ref{cyclichomology}, we prove Theorem \ref{thm1}.

\subsection*{Preliminaries} \label{preliminaries}
We denote by $\mathsf{Top}$ the category of topological spaces. We will always consider \textit{Moore paths} when defining path and loop spaces. For instance, the \textit{space of paths in} $X$ is defined as the set
$$PX = \{ (\gamma, l) | l \in [0,\infty)\text{ and } \gamma \colon [0,l] \to X \text{ is a continuous map}\}$$
equipped with the natural topology induced by the compact-open topology. The \textit{free loop space} is defined as the subspace $\mathcal{L}X=\{ (\gamma, l)\in PX | \gamma(0)=\gamma(l) \} \subset PX$. These constructions give rise to functors $\mathsf{Top} \to \mathsf{Top}$.

Throughout the article, $R$ denotes a fixed commutative ring with unit and we write $\otimes=\otimes_R$. A \textit{graded coassociative $R$-coalgebra} $(C, \Delta)$ consists of a graded $R$-module $C$ and an $R$-linear map $\Delta \colon C \to C \otimes_R C$ of degree $0$ satisfying $(\Delta \otimes_R \text{id}_C ) \circ \Delta =  (\text{id}_C \otimes_R \Delta ) \circ \Delta$. We say $(C, \Delta)$ is \textit{counital} if there is an $R$-linear map $\varepsilon \colon C \to R$ of degree $0$ such that $(\varepsilon \otimes_R \text{id}_C ) \circ \Delta = \text{id}_C = (\text{id}_C \otimes_R \varepsilon) \circ \Delta$. We denote by $C^{\text{op}}$ the graded coalgebra whose underlying $R$-module is $C$, and its coproduct is given by $\Delta^{\text{op}} = t\circ \Delta$ where $t:C\otimes C \rightarrow C\otimes C$ is defined by $t(c\otimes c')= (-1)^{\abs{c}\abs{c'}} c'\otimes c $. A \textit{dg} $R$-\textit{module}, or \textit{chain complex}, $(M,\partial)$ consists of a graded $R$-module $M$ together with an $R$-linear map $\partial \colon M \to M$ of degree $-1$, called a \textit{differential}, satisfying $\partial^2=0$. We denote the category of dg $R$-modules with $R$-linear maps of degree $0$ commuting with differentials (\textit{chain maps}) as morphisms by $\mathsf{Ch}_R$. A \textit{dg $R$-coalgebra} $(C, \partial, \Delta)$ consists of a graded $R$-module $C$ such that $(C,\partial)$ is a dg $R$-module, $(C,\Delta)$ is a graded coassociative coalgebra, and $\partial$ is a coderivation of $\Delta$, i.e. $(\partial \otimes \text{id}_C + \text{id}_C \otimes \partial) \circ \Delta= \Delta \circ \partial$. Any set $S$ gives rise to a cocommutative $R$-coalgebra $R[S]=(R\langle S \rangle, \Delta, \varepsilon)$, where $R\langle S \rangle$ denotes the free $R$-module generated by $S$ with coproduct and counit determined by declaring $\Delta(s)=s \otimes s$ and $\varepsilon (s)=1_R$, respectively, for any basis element $s \in S$. 

When applying maps between graded objects we follow the \textit{Koszul sign rule}.

\subsection*{Acknowledgements}
The authors would like to acknowledge support of NSF grant DMS 210554. The first author would like to thank Kai Cieliebak, Julian Holstein, Samson Saneblidze,  and Mahmoud Zeinalian for helpful discussions. 

\section{Polytopes}\label{polytopes}
In this section we describe two families of polytopes. The polytopes in the first family are called \textit{freehedra}. These were introduced in \cite{saneblidze} and discussed further in \cite{rivera-saneblidze} in the context of combinatorial models for the free loop space fibration; see also \cite{poliakova2020cellular}. The polytopes in the second family will be called \textit{Goodwillie polytopes} and, to our knowledge, the present article gives a detailed account of these for the first time. There is an evident connection between freehedra and Goodwillie polytopes that may be understood as a combinatorial picture behind \textit{differential graded Koszul duality}. 

\subsection{Freehedra}\label{def:freehedra}
Let $I^n$ be the $n$-dimensional unit cube in $\mathbb{R}^n$ and denote by 
\[V_{I^n}=\{ (a_1, \dots , a_n) \in \mathbb{R}^n \,\vert \, a_i \in \{0,1\}\}\] its set of vertices. Subdivide each of the $n-1$ faces of the form
\[\set{ (a_1, \dots , a_n) \in I^n \, \vert a_i =0},\]
for $i=2, \ldots, n$,
into two new faces along the hyperplane determined by $a_i=0$ and $a_1 = {(i-1)/n}$. This subdivision of the cube determines a cell complex, denoted by $F_n$, embedded inside $\mathbb{R}^n$. We call $F_n$ the \textit{$n$-freehedron}. We call  \textit{facets} the codimension $1$ faces of a polytope or cell complex.

Note that $F_n$ has $3n-1$ facets. We label each of these as follows:
\begin{align*}
    C_{0,i,n}^{'} &:= \set{(a_1, \dots , a_n) \in I^n \, \vert a_i =0 , \text{ and } a_1 \geq {(i-1)/n} } \text{ for }i= 2,\dots, n \\
    C_{0,i,n}^{''} &:= \set{(a_1, \dots , a_n) \in I^n \, \vert a_i =0 , \text{ and } a_1 \leq {(i-1)/n} } \text{ for }i= 2,\dots, n\\
    C_{0,1,n} &:= \set{(a_1, \dots , a_n) \in I^n \, \vert a_1 =0}\\
    C_{1,i,n} &:= \set{(a_1, \dots , a_n) \in I^n \, \vert a_i =1}.
\end{align*}
The codimension $2$ faces of the freehedron are determined by the non-empty intersections of the facets, codimension $3$ faces are determined by the non-empty triple intersections of the facets, and so on. This provides a labeling of all faces of the freehedron (up to rearrangement of terms in the intersection). This data determines an abstract polytope with poset structure given by face inclusions. We denote this abstract polytope by $\fancy{F}_n$, and also call it the $n$-freehedron.

We recall an alternate description of $\fancy{F}_n$ following \cite{poliakova2020cellular}, but using slightly different language and notation. In this description, faces are labeled by what we call \textit{$n$-strings} and face inclusions are determined by \textit{face transformations} as we now define.
\begin{definition} \label{strings}
\normalfont  Let $n$ be a non-negative integer.  An \textit{$n$-string} is an ordered sequence 
    \[s = \{ s_0 | \cdots |s_{l-1} \} s_l \{ s_{l+1} | \cdots | s_k \}\]
    such that
    \begin{itemize}
    \item $k$ and $l$ are integers with $0 \leq k \leq n$ and $0 \leq l \leq k$
   \item each $s_i$ is a nonempty subset of $\{0,1,\dots, n\}$ 
        \item for each $i=0,\ldots,k-1$, we have $\max s_i = \min s_{i+1}$
        \item $\abs{s_i}\geq 2$ if $i\neq l$ ($\abs{s_l}=1$ is allowed)
        \item $\min s_0 =0$ and $\max s_k =n$
      
    \end{itemize}
    In the above definition, if $l=0$ we write $s=\{ \, \} s_0\{s_1| \cdots |s_k\}$ and if  $l=k$ we write $s=\{s_0| \cdots |s_{k-1}\}s_k\{ \, \}$.  If $k=l=0$, we write $s=\{ \, \}s_0\{ \, \}$. We will omit the curly brackets in the set notation when explicitly describing an $s_i$ to avoid confusion, e.g. we denote the $4$-string $\{ \{0,1\} |\{1,3\}\} \{3\} \{ \{3,4\} \}$ simply by $\{0,1 | 1,3 \}3 \{3,4\}.$

 A \textit{face transformation} on $s = \{ s_0 | \cdots |s_{l-1} \} s_l \{ s_{l+1} | \cdots | s_k \}$ is defined to be one of the following operations:
    \begin{enumerate}
        \item Drop: for some $s_j$ in $s$ remove an element $x \in s_j$ with $\min s_j < x< \max s_j.$
        \item Inner break: replace some $s_j$, $j \neq l$, with $s_j^1|s_j^2$ where $s_j^1 = \{ a\in s_j \vert a \leq x \}$, $s_j^2 = \{ a\in s_j \vert a \geq x \}$ for some $x \in s_j$ with $\min s_j < x < \max s_l$, and neither $s_j^1$ or $s_j^2$ are singletons. 
        \item Right outer break: replace \[s = \{ s_0 | \cdots |s_{l-1} \} s_l \{ s_{l+1} | \cdots | s_k \}\] with \[s = \{ s_0 | \cdots |s_{l-1} \} s_l^1 \{ s_{l}^2 | s_{l+1} | \cdots | s_k \}\] where $s_l^1 = \{a\in s_l \vert  a \leq x \}$, $s_l^2 = \{  a\in s_l \vert a \geq x\}$ for some $x \in s_l$ with $x < \max s_l$, and the set $s^1_l$ (but not $s^2_l$) is allowed to be a singleton.
        \item Left outer break: replace \[s = \{ s_0 | \cdots |s_{l-1} \} s_l \{ s_{l+1} | \cdots | s_k \}\] with \[s = \{ s_0 | \cdots |s_{l-1} | s_l^1 \} s_{l}^2 \{ s_{l+1} | \cdots | s_k \}\] 
        where $s_l^1 = \{a\in s_l \vert  a \leq x \}$, $s_l^2 = \{  a\in s_l \vert a \geq x\}$ for some $x \in s_l$ with $x < \max s_l$, and the set $s^2_l$ (but not $s^1_l$) is allowed to be a singleton.
    \end{enumerate}
\end{definition}

We label the top dimensional $n$-cell of $F_n$ by the $n$-string $\{ \, \}0,\ldots ,n\{ \, \}$, (so that $k=l=0$ and $s_0$ is the set $\{1,\ldots,n\}$). To each $(n-1)$-dimensional face of $F_n$ we assign an $n$-string in the following way:
\begin{align}\label{faces_and_breaks}
    C_{0,i,n}^{'} &\longmapsto \{0 , \ldots, i-1 \}i-1, \dots, n \{ \, \} \text{ for }i= 2,\dots, n \\
    C_{0,i,n}^{''} &\longmapsto \{ \, \}0,\ldots,i-1\{i-1, \dots, n\} \text{ for }i= 2,\dots, n \\
    C_{0,1,n} &\longmapsto \{ \, \} 0\{0, \dots, n\} \\
    C_{1,1,n} &\longmapsto \{0, \ldots, n\}n\{ \, \} \\
    C_{1,i,n} &\longmapsto \{ \, \}0,\dots,\widehat{i-1},\dots ,  n\{ \, \}   \text{ for } i=2,\dots , n
\end{align}
This extends to faces of all dimensions through the following correspondence between the operations of intersection and face transformations:
\begin{align*}
    \cap C_{0,i,n}^{'} &\longmapsto \text{Left outer break at }x= i-1 \text{ if $i-1 \in s_l$, inner break otherwise}\\
    \cap C_{0,i,n}^{''} &\longmapsto \text{Right outer break at }x= i-1 \text{ if $i-1 \in s_l$, inner break otherwise} \\
    \cap C_{0,1,n} &\longmapsto \text{Right outer break at }\min s_l\\
    \cap C_{1,1,n} &\longmapsto \text{Left outer break at }\max s_l\\
    \cap C_{1,i,n} &\longmapsto \text{Drop } i \\
\end{align*}
Under this correspondence, a face $s'$ is a codimension 1 face of $s$ if it can be obtained from $s$ via a face transformation. This assignment gives rise to an alternate description of $\fancy{F}_n$ in terms of $n$-strings and face transformations. See figure \ref{fig:F2-with-labels} below for an example when $n=2$.
\begin{figure}[h]
\centering
\resizebox{.45\textwidth}{!}{%
\begin{circuitikz}
\tikzstyle{every node}=[font=\normalsize]
\draw [](5,11.75) to[short] (5,6.75); 
\draw [](5,6.75) to[short] (10,6.75);
\draw [](10,11.75) to[short] (10,6.75);
\draw [](5,11.75) to[short] (10,11.75);
\draw (5,11.75) to[short, -*] (5,11.75); 
\draw (5,6.75) to[short, -*] (5,6.75); 
\draw (10,6.75) to[short, -*] (10,6.75); 
\draw (10,11.75) to[short, -*] (10,11.75);
\draw (7.5,6.75) to[short, -*] (7.5,6.75); 
\node [font=\normalsize] at (7.5,9.25) {};
\node [font=\normalsize] at (7.5,12.05) {$C_{1,2,2}$};
\node [font=\normalsize] at (10.5,9.25) {$C_{1,1,2}$};
\node [font=\normalsize] at (4.5,9.25) {$C_{0,1,2}$};
\node [font=\normalsize] at (6.25,6.4) {$C_{0,2,2}^{''}$};
\node [font=\normalsize] at (8.75,6.4) {$C_{0,2,2}^{'}$};
\draw [->, >=Stealth] (4.35,12.4) to (4.85,11.90); 
\draw [->, >=Stealth] (10.65,12.4) to (10.15,11.9); 
\draw [->, >=Stealth] (10.65,6.1) to (10.15,6.6); 
\draw [->, >=Stealth] (4.35,6.1) to (4.85,6.60); 
\draw [->, >=Stealth] (7.5,6.1) to (7.5,6.6); 
\node [font=\normalsize] at (4,12.75) {$C_{1,2,2} \cap C_{0,1,2}$};
\node [font=\normalsize] at (11,12.75) {$C_{1,1,2} \cap C_{1,2,2}$};
\node [font=\normalsize] at (4,5.75) {$C_{0,1,2} \cap C^{''}_{0,2,2}$};
\node [font=\normalsize] at (11,5.75) {$C_{1,1,2} \cap C^{'}_{0,2,2}$};
\node [font=\normalsize] at (7.5,5.75) {$C^{'}_{0,2,2}\cap C^{''}_{0,2,2}$};
\end{circuitikz}
}%
\resizebox{.45\textwidth}{!}{%
\begin{circuitikz}
\tikzstyle{every node}=[font=\normalsize]
\draw [](5,11.75) to[short] (5,6.75); 
\draw [](5,6.75) to[short] (10,6.75);
\draw [](10,11.75) to[short] (10,6.75);
\draw [](5,11.75) to[short] (10,11.75);
\draw (5,11.75) to[short, -*] (5,11.75); 
\draw (5,6.75) to[short, -*] (5,6.75); 
\draw (10,6.75) to[short, -*] (10,6.75); 
\draw (10,11.75) to[short, -*] (10,11.75);
\draw (7.5,6.75) to[short, -*] (7.5,6.75); 
\node [font=\small] at (7.5,9.25) {$\{\} 0,1,2 \{\}$}; 
\node [font=\small] at (7.5,12.05) {$\{\}0,2\{\}$}; 
\node [font=\small] at (11.15,9.25) {$\{0,1,2\}2\{ \}$}; 
\node [font=\small] at (3.8,9.25) {$\{\} 0 \{ 0,1,2 \}$}; 
\node [font=\small] at (6.25,6.4) {$\{\} 0,1 \{ 1,2 \} $}; 
\node [font=\small] at (8.75,6.4) {$\{ 0,1 \}1,2\{ \}$};
\node [font=\small] at (4,12.75) {$\{\}0\{0,2\}$}; 
\node [font=\small] at (11,12.75) {$\{0,2\}2\{ \}$}; 
\node [font=\small] at (4,5.75) {$\{ \}0 \{ 0 ,1 | 1,2 \}$}; 
\node [font=\small] at (11,5.75) {$\{ 0,1 | 1,2 \}2\{\}$}; 
\node [font=\small] at (7.5,5.75) {$\{0,1 \} 1 \{ 1,2 \}$}; 
\draw [->, >=Stealth] (4.35,12.4) to (4.85,11.90); 
\draw [->, >=Stealth] (10.65,12.4) to (10.15,11.9); 
\draw [->, >=Stealth] (10.65,6.1) to (10.15,6.6); 
\draw [->, >=Stealth] (4.35,6.1) to (4.85,6.60); 
\draw [->, >=Stealth] (7.5,6.1) to (7.5,6.6); 
\end{circuitikz}
}%
\caption{Two labelings of the freehedron $F_2$.}
\label{fig:F2-with-labels}
\end{figure}
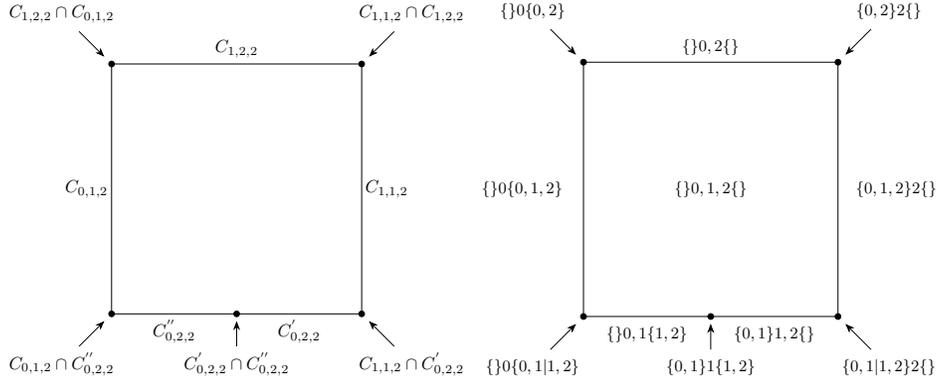


\subsection{Goodwillie polytopes}\label{gpolytopes}
Given a real number $\epsilon$ with $0\leq \epsilon \leq 1/2$, consider the set of points 
\[
U_{G_n^\epsilon} =  \left\{ \left(\frac{i-1}{n}, a_2,\dots, a_n \right) \in \mathbb{R}^n \,\vert \, i\in \{ 2,\dots,n \}, a_i = -1, a_j \in \{ \epsilon , 1-\epsilon\} \text{ for } j \neq i \right\}
\]
Denote $V_{G^{\epsilon}_n} :=V_{I^n} \cup U_{G_n^\epsilon}$ and define the polytope $G_n^\epsilon$ as the convex hull of the set $V_{G^{\epsilon}_n}$. If $ 0< \epsilon, \epsilon '< 1/2$, then $G_n^{\epsilon}$ and $G_n^{\epsilon'}$ are combinatorially equivalent. In this case, we denote the underlying abstract polytope by $\fancy{G}_n$ and call it the $n$-th \textit{Goodwillie polytope}. For any $0<\epsilon<1/2$, $G_n^{\epsilon}$ is a realization of $\fancy{G}_n$ as a (convex) polytope in $\mathbb{R}^n$.  Furthermore, note that $G_n^{0}$, $G_n^{\frac{1}{4}}$, and $G_n^{\frac{1}{2}}$ are pairwise combinatorially distinct for $n>2$.
\begin{figure}[h]
    \centering
    \includegraphics[width= .8\textwidth]{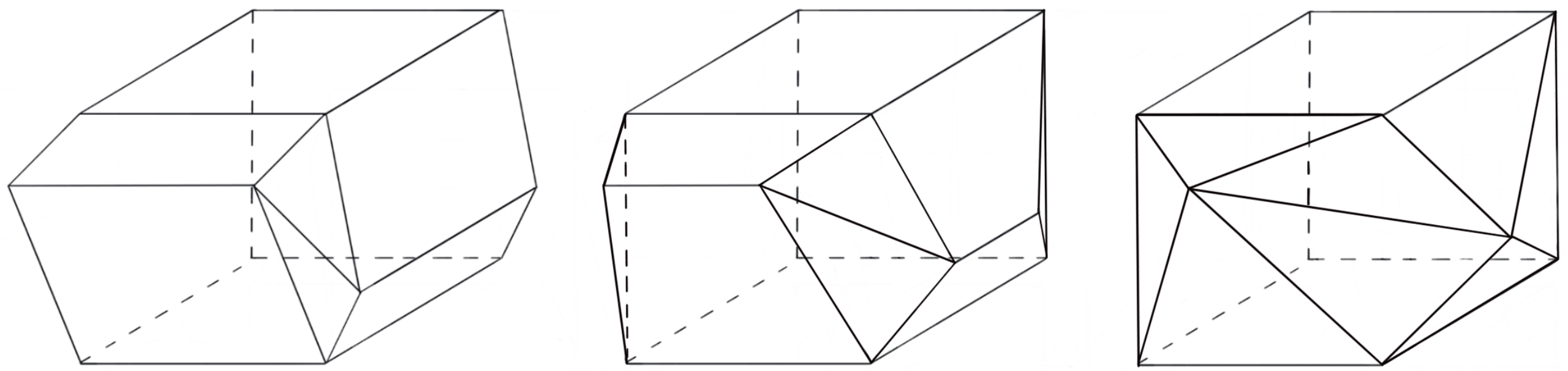}
    \caption{(left) $G_3^{0}$, (center) $G_3^{\epsilon}$, where $0< \epsilon < \frac{1}{2}$, and (right) $G_3^{\frac{1 }{  2}}$.}
    \label{fig:G3_epsilon}
\end{figure}

If  $0 \leq \epsilon < 1/2$, then $|U_{G^\epsilon_n}|= (n-1)2^{n-2}$. Hence, $G_n^{\epsilon}$ has $|V_{G^{\epsilon}_n}|= |V_{I^n} \cup U_{G^{\epsilon}_n}|=2^n +(n-1)2^{n-2}=2^{n-2}(3+n)$ vertices, just like $F_n$.

\begin{figure}[!ht]
\centering
\resizebox{.7\textwidth}{!}{%
\begin{circuitikz}
\tikzstyle{every node}=[font=\LARGE]
\draw [short] (2.5,15.5) .. controls (2.5,10) and (2.5,10) .. (2.5,4.25); 
\draw [short] (0,9.25) .. controls (10,9.25) and (10,9.25) .. (20,9.25);
\draw [short] (0,14.25) .. controls (10,14.25) and (10,14.25) .. (20,14.25);
\draw [short] (6.25,15.5) .. controls (6.25,10) and (6.25,10) .. (6.25,4.25);
\draw [short] (10,15.5) .. controls (10,10) and (10,10) .. (10,4.25);
\draw [short] (15,15.5) .. controls (15,10) and (15,10) .. (15,4.25);
\draw [short] (20,15.5) .. controls (20,10) and (20,10) .. (20,4.25);
\draw [short] (0,15.5) .. controls (0,10) and (0,10) .. (0,4.25);
\draw [short] (0,4.25) .. controls (10,4.25) and (10,4.25) .. (20,4.25);
\draw [short] (0,15.5) .. controls (10,15.5) and (10,15.5) .. (20,15.5);
\node [font=\LARGE] at (1.25,11.8) {$F_n$};
\node [font=\LARGE] at (1.25,6.7) {$G_n^\epsilon$};
\node [font=\LARGE] at (4.4,14.9) {0};
\node [font=\LARGE] at (8.1,14.9) {1};
\node [font=\LARGE] at (12.5,14.9) {2};
\node [font=\LARGE] at (17.5,14.9) {3};
\draw (4.5,11.75) to[short, -*] (4.5,11.75); 
\draw (4.5,6.75) to[short, -*] (4.5,6.75); 
    \draw [line width=1pt, short] (7.5,11.75) .. controls (8.25,11.75) and (8.25,11.75) .. (8.75,11.75);
    \draw (7.5,11.75) to[short, -*] (7.5,11.75);
    \draw (8.75,11.75) to[short, -*] (8.75,11.75);
\draw [line width=1pt, short] (7.5,6.75) .. controls (8.25,6.75) and (8.25,6.75) .. (8.75,6.75);
\draw (7.5,6.75) to[short, -*] (7.5,6.75);
\draw (8.75,6.75) to[short, -*] (8.75,6.75);
\draw [line width=1pt, short] (11.25,13) to (11.25,10.5); 
\draw [line width=1pt, short] (11.25,10.5) to (13.75,10.5); 
\draw [line width=1pt, short] (13.75,13) to (13.75,10.5); 
\draw [line width=1pt, short] (11.25,13) .. controls (12.5,13) and (12.5,13) .. (13.75,13); 
\draw (12.5,13) to[short, -*] (12.5,13); 
\draw (11.25,13) to[short, -*] (11.25,13); 
\draw (13.75,13) to[short, -*] (13.75,13); 
\draw (11.25,10.5) to[short, -*] (11.25,10.5); 
\draw (13.75,10.5) to[short, -*] (13.75,10.5); 
\draw [line width=1pt, short] (11.25,5.5) .. controls (11.25,6.5) and (11.25,6.5) .. (11.25,7.5); 
\draw [line width=1pt, short] (11.25,5.5) to (13.75,5.5); 
\draw [line width=1pt, short] (13.75,5.5) .. controls (13.75,6.5) and (13.75,6.5) .. (13.75,7.5); 
\draw [line width=1pt, short] (11.25,7.5) to (12.5,8.75); 
\draw [line width=1pt, short] (12.5,8.75) to (13.75,7.5); 
\draw (11.25,7.5) to[short, -*] (11.25,7.5); 
\draw (12.5,8.75) to[short, -*] (12.5,8.75); 
\draw (13.75,7.5) to[short, -*] (13.75,7.5); 
\draw (13.75,5.5) to[short, -*] (13.75,5.5); 
\draw (11.25,5.5) to[short, -*] (11.25,5.5); 
\draw [line width=0.8pt, short] (16.25,12.25) to (16.25,10.5); 
\draw [line width=0.8pt, short] (16.25,10.5) to (18,10.5); 
\draw [line width=0.8pt, short] (18,10.5) to (18,12.25); 
\draw [line width=0.8pt, short] (16.25,12.25) to (18,12.25); 
\draw [line width=0.8pt, short] (16.25,12.25) to (17.5,13); 
\draw [line width=0.8pt, short] (18,12.25) to (19.25,13); 
\draw [line width=0.8pt, short] (18,10.5) to (19.25,11.25); 
\draw [line width=0.8pt, short] (19.25,13) to (19.25,11.25); 
\draw [line width=0.8pt, short] (17.5,13) to (19.25,13); 
\draw [line width=0.8pt, dashed] (17.5,13) to (17.5,11.25); 
\draw [line width=0.8pt, dashed] (16.25,10.5) to (17.5,11.25); 
\draw [line width=0.8pt, dashed] (17.5,11.25) to (19.25,11.25); 
\draw [line width=0.8pt, short] (16.25,11.75) to (18,11.75); 
\draw [line width=0.8pt, short] (18,11) to (19.25,11.75); 
\draw (16.25,10.5) to[short, -*] (16.25,10.5); 
\draw (16.25,12.25) to[short, -*] (16.25,12.25); 
\draw (16.25,11.75) to[short, -*] (16.25,11.75); 
\draw (18,11.75) to[short, -*] (18,11.75); 
\draw (18,12.25) to[short, -*] (18,12.25); 
\draw (18,10.5) to[short, -*] (18,10.5); 
\draw (17.5,11.25) to[short, -*] (17.5,11.25); 
\draw (19.25,11.25) to[short, -*] (19.25,11.25); 
\draw (17.5,13) to[short, -*] (17.5,13); 
\draw (19.25,13) to[short, -*] (19.25,13); 
\draw (18,11) to[short, -*] (18,11); 
\draw (19.25,11.75) to[short, -*] (19.25,11.75); 
\draw [line width=0.8pt, short] (16.25,7.25) to (16.5,6.75); 
\draw [line width=0.8pt, short] (16.5,6.75) to (16.25,5.5); 
\draw [line width=0.8pt, short] (16.25,5.5) to (18,5.5); 
\draw [line width=0.8pt, short] (17,6.75) to (18,5.5); 
\draw [line width=0.8pt, short] (17,6.75) to (18,7.25); 
\draw [line width=0.8pt, short] (16.25,7.25) to (18,7.25); 
\draw [line width=0.8pt, short] (16.25,7.25) to (16.25,5.5); 
\draw [line width=0.8pt, short] (16.5,6.75) to (17,6.75); 
\draw [line width=0.8pt, short] (18,5.5) to (18.45,6.15);
\draw [line width=0.8pt, short] (18,7.25) to (18.45,6.15);
\draw [line width=0.8pt, short] (18,5.5) to (19.25,6.25);
\draw [line width=0.8pt, short] (19.25,6.25) to (19.05,6.48);
\draw [line width=0.8pt, short] (19.25,6.25) to (19.25,8);
\draw [line width=0.8pt, short] (18.45,6.15) to (19.05,6.48); 
\draw [line width=0.8pt, short] (17,6.75) to (18.45,6.15);
\draw [line width=0.8pt, short] (18,7.25) to (19.25,8);
\draw [line width=0.8pt, short] (19.25,8) to (19.05,6.48);
\draw [line width=0.8pt, short] (16.25,7.25) to (17.5,8);
\draw [line width=0.8pt, short] (17.5,8) to (19.25,8);
\draw [line width=0.8pt, dashed] (17.5,8) to (17.5,6.25);
\draw [line width=0.8pt, dashed] (16.25,5.5) to (17.5,6.25);
\draw [line width=0.8pt, dashed] (17.5,6.25) to (19.25,6.25);
\draw (16.25,7.25) to[short, -*] (16.25,7.25); 
\draw (16.5,6.75) to[short, -*] (16.5,6.75); 
\draw (16.25,5.5) to[short, -*] (16.25,5.5); 
\draw (17,6.75) to[short, -*] (17,6.75); 
\draw (18,5.5) to[short, -*] (18,5.5); 
\draw (18,7.25) to[short, -*] (18,7.25); 
\draw (17.5,6.25) to[short, -*] (17.5,6.25); 
\draw (19.25,6.25) to[short, -*] (19.25,6.25); 
\draw (17.5,8) to[short, -*] (17.5,8); 
\draw (19.05,6.48) to[short, -*] (19.05,6.48); %
\draw (19.25,8) to[short, -*] (19.25,8); %
\draw (18.45,6.15) to[short, -*] (18.45,6.15); 
\end{circuitikz}
}%
\caption{Freehedra and Goodwillie polytopes in low dimensions.}
\end{figure}
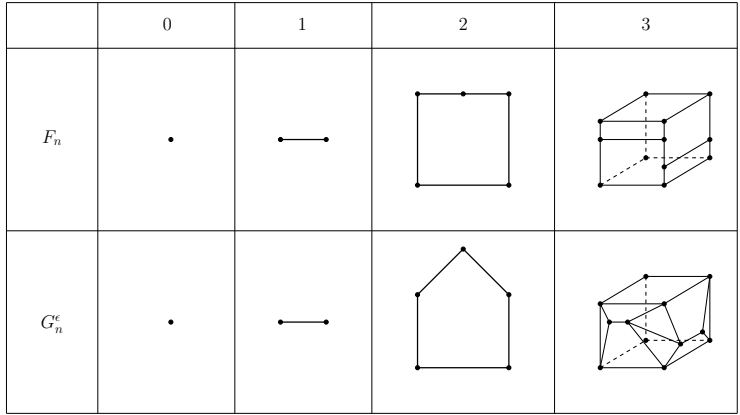
The Goodwillie polytopes decompose as a union of prisms glued along their faces, as the next result shows. 
\begin{proposition}\label{decomp}
    For any $\epsilon \in [0,\frac{1}{2})$, the polytope $G_n^{\epsilon}$ may be decomposed as a union
    $$G_n^{\epsilon} = \bigcup\limits_{S \subseteq \{2,\dots,n\}}G_n^{\epsilon,S},$$
    where each $G_{n}^{\epsilon, S}$ is a polytope combinatorially equivalent to $\Delta^{\abs{S}+1} \times I^{n-\abs{S}-1}$ and, for any two $S,T \subseteq \{2,\ldots,n\}$, $G_{n}^{\epsilon, S} \cap G_{n}^{\epsilon, T}$ is a facet of both $G_{n}^{\epsilon, S}$ and $G_{n}^{\epsilon, T}$. If $\epsilon>0$, each facet of $G_n^{\epsilon}$ is a facet of some $G^{\epsilon,S}_n$ for exactly one $S \subseteq \{2,\ldots,n\}.$
\end{proposition}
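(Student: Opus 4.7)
The plan is to construct each piece $G_n^{\epsilon,S}$ explicitly as a convex subpolytope of $G_n^\epsilon$ and then verify the three required properties (combinatorial type, covering, intersections) in turn.

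For each subset $S \subseteq \{2,\ldots,n\}$, I would take $G_n^{\epsilon,S}$ to be the convex hull of the vertices in $V_{G_n^\epsilon}$ of the following two types: first, all cube vertices $v \in V_{I^n}$ satisfying $v_i = 0$ for every $i \in S$; second, all tip vertices $u \in U_{G_n^\epsilon}$ whose distinguished coordinate $u_i = -1$ has index $i \in S$. A direct count gives $(|S|+2)\cdot 2^{n-|S|-1}$ such vertices, matching the vertex count of $\Delta^{|S|+1} \times I^{n-|S|-1}$. To establish the combinatorial equivalence, I would use the coordinate projection $\pi \colon \mathbb{R}^n \to \mathbb{R}^{\{2,\ldots,n\} \setminus S}$ onto the ``cube directions'' outside $S$. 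The image $\pi(G_n^{\epsilon,S})$ is the cube $I^{n-|S|-1}$, and the fiber over each cube vertex is the convex hull of $|S|+2$ points in the complementary coordinate subspace $\mathbb{R}^{\{1\}\cup S}$; I would check these are affinely independent (the first coordinate $a_1 \in \{0, (i-1)/n, 1\}$ together with the $-1$-entries easily distinguishes them), so each fiber is an $(|S|+1)$-simplex. Compatibility of these simplicial fibers over adjacent cube vertices then yields the product structure $\Delta^{|S|+1} \times I^{n-|S|-1}$.

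For the covering $G_n^\epsilon = \bigcup_S G_n^{\epsilon,S}$ and the intersection property, I would argue by induction on $n$. The base case $n = 2$ amounts to the visible decomposition of $G_2^\epsilon$ into the square $I^2$ (for $S=\emptyset$) and the triangle with vertices $(0,0), (1,0), (1/2,-1)$ (for $S=\{2\}$), glued along the edge $\{(a_1,0) : a_1 \in [0,1]\}$. For the inductive step, I would partition $G_n^\epsilon$ by the sign pattern of $a_2,\ldots,a_n$: the region where all $a_j \geq 0$ is the cube $I^n = G_n^{\epsilon,\emptyset}$, while the region where $a_j \leq 0$ exactly for those $j$ in a nonempty subset $S$ is $G_n^{\epsilon,S}$. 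The intersection $G_n^{\epsilon,S} \cap G_n^{\epsilon,T}$ is then the convex hull of the common vertex set, a common face of each piece that, by direct inspection, is a facet of both when $|S\triangle T|=1$. For the final claim when $\epsilon > 0$, I would classify the facets of $G_n^\epsilon$ by their supporting hyperplanes, distinguishing ``cube facets'' (supported by $a_i=1$ or $a_1 \in \{0,1\}$) from ``tip facets'' (tangent to the tips), and verify that each facet type realizes as an external facet of a unique piece $G_n^{\epsilon,S}$ determined by which coordinates are active.

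The main obstacle is the induction step for covering and intersection: showing rigorously that the sign-pattern regions coincide with the convex hulls defined above, and that the interiors of distinct pieces are disjoint, requires careful analysis of the supporting hyperplanes of $G_n^\epsilon$ near the tips. Moreover, a uniform argument across all $\epsilon \in [0,1/2)$ is delicate because of the combinatorial jumps at $\epsilon=0$ and $\epsilon=1/2$; the facet-uniqueness argument in the last step depends on $\epsilon > 0$ precisely to keep the ``tip facets'' from collapsing onto cube facets, and the bookkeeping of which supporting hyperplanes separate which pieces must be tracked explicitly in the induction.
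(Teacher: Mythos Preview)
Your approach is correct in spirit but considerably more roundabout than the paper's, and it contains an internal inconsistency that would need to be resolved before the argument goes through.

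The paper simply \emph{defines} the piece $G_n^{\epsilon,S}$ as the sign-pattern region
\[
G_n^{\epsilon,S} = \{(a_1,\ldots,a_n) \in G_n^\epsilon : a_i \leq 0 \text{ for } i \in S,\ a_i \geq 0 \text{ for } i \notin S\},
\]
from which the covering $G_n^\epsilon = \bigcup_S G_n^{\epsilon,S}$ and the face-intersection property follow immediately from the definition, with no induction needed. The combinatorial type is then identified by projecting onto the coordinates indexed by $\{1\} \cup S$ (the ``simplex direction''), which is dual to your projection onto $\{2,\ldots,n\}\setminus S$; the paper carries this out at $\epsilon = 0$, where the vertices are easy to list, and observes that the combinatorial type of each $G_n^{\epsilon,S}$ is constant in $\epsilon$. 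The final facet-uniqueness statement for $\epsilon>0$ is disposed of in one line: none of the hyperplanes supporting facets of $I^n$ meet $U_{G_n^\epsilon}$.

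The inconsistency in your version is this: you first define $G_n^{\epsilon,S}$ as the convex hull of a specified vertex subset of $V_{G_n^\epsilon}$, and later, in your inductive step, identify it with the sign-pattern region. For $\epsilon > 0$ and $|S| \geq 2$ these two descriptions do not agree. A tip vertex with $-1$ at index $i \in S$ has its remaining $S$-coordinates in $\{\epsilon, 1-\epsilon\}$, hence strictly positive, so it lies \emph{outside} the region $\{a_j \leq 0 : j \in S\}$; conversely, the vertices of the sign-pattern region for such $S$ are not all vertices of $G_n^\epsilon$ but lie on higher-dimensional faces. Your vertex count $(|S|+2)\cdot 2^{n-|S|-1}$ also does not match your stated vertex set unless you silently restrict the non-distinguished $S$-coordinates of the tip vertices, and then those restricted vertices are again not in the sign region when $\epsilon>0$. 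The cleanest fix is to adopt the sign-pattern definition from the outset, which also renders the induction on $n$ unnecessary.
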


\begin{proof}
    For each $S\subseteq\set{2,\dots, n}$ define
    \[G_n^{\epsilon,S} := \{  (a_1,\dots,a_n) \in G_n^{\epsilon} \,\vert \, a_i \leq 0 \text{ if } i \in {S}, a_i \geq 0 \text{ if } i \notin {S}  \},\]
   so that
    \[G_n^{\epsilon} = \bigcup\limits_{{S}\subseteq \{2,\dots,n\} } G_n^{\epsilon,S}\, .\] Note that each $G^{\epsilon,S}_n$ is the convex hull of the set $V_{G^{\epsilon}_n}  \cap G^{\epsilon,S}_n$ and the combinatorial type of $G^{\epsilon,S}_n$ does not change as $\epsilon$ varies in $[0,\frac{1}{2})$ (even though $G^0_n$ and $G^{\epsilon}_n$ are combinatorially different for $\epsilon >0$ and $n>2$).
   Suppose $S=\{i_1, \ldots, i_k\} \subseteq \{2, \ldots, n\}$ and $i_1< \cdots <i_k$, and let $\pi_S \colon G_n^{0, S} \to\mathbb{R}^{k+1}$ be the projection map $\pi_S(a_1,\ldots,a_n)= (a_1, a_{i_1}, \ldots a_{i_k})$. The set $\pi_S(G_n^{0, S})$ is the convex hull of $k+2$ affinely independent points, so it defines a polytope combinatorially equivalent to $\Delta^{k+1}$. The remaining coordinates, corresponding to indices not in $S$, parameterize a cube of dimension $n-k-1$. Thus $G_n^{0,S}$ (and consequently $G_n^{\epsilon,S}$ for any $\epsilon \in (0,\frac{1}{2})$) is combinatorially equivalent to $\Delta^{\abs{S}+1} \times I^{n-\abs{S}-1}$. See Figure \ref{decompfigure} for a picture when $n=3$. From the definition of $G^{\epsilon, S}$, it clearly follows that $G_{n}^{\epsilon, S} \cap G_{n}^{\epsilon, T}$ is a facet of both $G_{n}^{\epsilon, S}$ and $G_{n}^{\epsilon, T}$.

The second statement follows from the fact that, if $\epsilon >0$, none of the hyperplanes determined by the facets of $I^n$ intersect the set $U_{G^{\epsilon}_n}$.
\end{proof}
\begin{figure}[h!] \label{decompfigure}
    \centering
    \includegraphics[width= \textwidth]{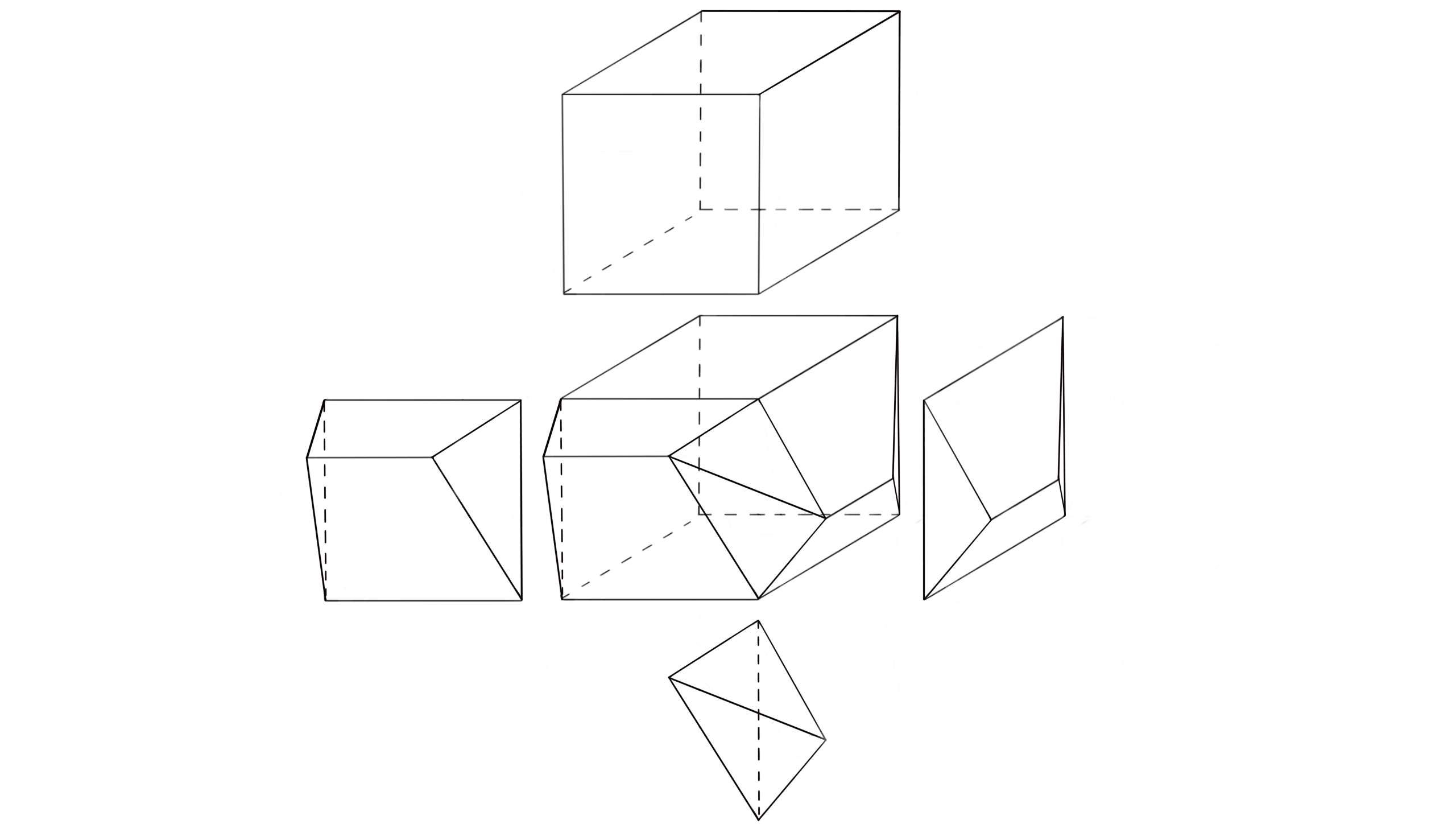}
    \caption{Example of the decomposition of $G_n^{0<\epsilon< \frac{1}{2}}$ for $n=3$.}
    \label{fig:G3_subdivision}
\end{figure}

\begin{corollary}
For any $\epsilon \in (0,\frac{1}{2})$, the polytope $G_n^{\epsilon}$ has the same number of facets as vertices. 
\end{corollary}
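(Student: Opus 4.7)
The strategy is to use the decomposition from Proposition \ref{decomp} to count facets directly, then compare to the vertex count $2^{n-2}(n+3)$ already computed above.

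First I would fix $\epsilon \in (0, 1/2)$ and use Proposition \ref{decomp} to write
\[G_n^{\epsilon} = \bigcup_{S \subseteq \{2,\dots,n\}} G_n^{\epsilon,S}, \qquad G_n^{\epsilon,S} \cong \Delta^{|S|+1} \times I^{n-|S|-1},\]
with the property that every facet of $G_n^{\epsilon}$ is a facet of exactly one $G_n^{\epsilon,S}$. Writing $k=|S|$, the product polytope $\Delta^{k+1} \times I^{n-k-1}$ has a total of
\[(k+2) + 2(n-k-1) = 2n - k\]
facets (the $k+2$ facets of $\Delta^{k+1}$ crossed with the cube, plus the $2(n-k-1)$ facets coming from pairs of opposite faces of $I^{n-k-1}$).

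Next I would classify the facets of each $G_n^{\epsilon,S}$ as \emph{internal} or \emph{boundary}, where internal means shared with a neighboring piece $G_n^{\epsilon,T}$. The defining inequalities of $G_n^{\epsilon,S}$ single out the coordinate hyperplanes $a_i = 0$ for $i \in \{2,\dots,n\}$: if $i \in S$ (resp.\ $i \notin S$) then $a_i = 0$ is the boundary of the constraint $a_i \leq 0$ (resp.\ $a_i \geq 0$), and this facet is shared with $G_n^{\epsilon, S \triangle \{i\}}$. This accounts for exactly $n-1$ internal facets of $G_n^{\epsilon,S}$. Hence the number of facets of $G_n^{\epsilon,S}$ that are facets of $G_n^{\epsilon}$ equals
\[(2n - k) - (n-1) = n + 1 - k.\]

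Summing over all subsets $S \subseteq \{2,\dots,n\}$ and using Proposition \ref{decomp} (which guarantees no double-counting when $\epsilon > 0$), the total number of facets of $G_n^{\epsilon}$ is
\[\sum_{k=0}^{n-1} \binom{n-1}{k}(n+1-k) = (n+1)\,2^{n-1} - (n-1)\,2^{n-2} = 2^{n-2}(n+3),\]
where the standard identities $\sum_k \binom{n-1}{k} = 2^{n-1}$ and $\sum_k k\binom{n-1}{k} = (n-1)\,2^{n-2}$ were used. This matches the vertex count $|V_{G_n^{\epsilon}}| = 2^{n-2}(n+3)$ established just before the statement, completing the proof. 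The only real subtlety is verifying carefully that the $n-1$ hyperplanes $a_i = 0$ indeed cut out codimension-one faces of each $G_n^{\epsilon,S}$ and that every other facet of $G_n^{\epsilon,S}$ lies on the boundary of $G_n^{\epsilon}$; both points follow from the hypothesis $\epsilon > 0$ (so that no vertex in $U_{G_n^{\epsilon}}$ lies on any facet hyperplane of $I^n$) together with the product structure supplied by Proposition \ref{decomp}.
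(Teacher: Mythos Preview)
Your proposal is correct and follows essentially the same approach as the paper: both use the decomposition of Proposition~\ref{decomp}, count the facets of each prism $G_n^{\epsilon,S}\cong\Delta^{|S|+1}\times I^{n-|S|-1}$, subtract the $n-1$ internal facets shared with neighboring pieces, and sum over all $S\subseteq\{2,\dots,n\}$ to obtain $(n+3)2^{n-2}$. Your write-up is in fact slightly more careful than the paper's, which contains a typographical slip in the facet count of the prism (it writes $|S|$ where $|S|+2$ is needed, though the final answer stated there is correct).
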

\begin{proof}
    Let $\#P$ denote the number of facets of a polytope $P$. When $0< \epsilon < \frac{1}{2}$, by the decomposition in Proposition \ref{decomp}, and the fact that every $G_n^{\epsilon,S}$ shares a facet with precisely $n-1$ other terms in the decomposition, we get
    \[
    \begin{array}{c}
       \# G_n^\epsilon = \sum\limits_{S\subseteq \{2, \dots , n\} } (\# G_n^S - (n-1))= \sum\limits_{S\subseteq \{2, \dots , n\} } (\#(\Delta^{|S|+1}\times I^{n-|S|-1}) - (n-1))  \\
    = \sum\limits_{S\subseteq \{2, \dots , n\} } (|S| + 2{n-|S|-1}) - (n-1))= \sum\limits_{k=0 }^{n-1} \binom{n-1}{k} (n-k-1) = (n+3)2^{n-2}.
    \end{array}
    \]
\end{proof}

Goodwillie polytopes ``project onto'' freehedra in a way that is compatible with their combinatorial structure. We now describe this relationship precisely. For any $\epsilon \in [0,\frac{1}{2})$, let $p_{\epsilon} \colon G_n^{\epsilon} \rightarrow G_n^0$ be the linear map determined by the map on vertices $V_{G^{\epsilon}_n} \to V_{G^{0}_n}$ sending $(a_1, \ldots, a_n)$ to $(a_1, \overline{a_2}, \ldots, \overline{a_n})$ where $\overline{a_j}= 0$ or $1$ if $a_j= \epsilon$ or $1-\epsilon$, respectively, and  $\overline{a_j}=a_j$ otherwise. In particular, $p_0=\text{id}$. Let $p \colon G_n^0 \rightarrow F_n$ be the continuous map that projects all points of $G_n^0$ with negative coordinates to $0$ on the negative coordinates and is the identity otherwise. The composition $\rho_{\epsilon} = p \circ p_{\epsilon} \colon G_n^{\epsilon} \rightarrow F_n$ exhibits $F_n$ as a deformation retract of $G_n$ and induces a map of abstract polytopes $\rho \colon \fancy{G}_n \rightarrow \fancy{F}_n$.\label{def:rho}

\section{Categorical coalgebras and dg categories} \label{categoricalcoalgebrasanddgcategories}
\subsection{Categorical coalgebras} The main algebraic object we use for our constructions and results is a ``many-object'' version of a conilpotent dg coalgebra, which fits into the framework of categorical Koszul duality when the underlying commutative ring $R$ is a field, as developed in \cite{holstein-lazarev}. We follow the philosophy of using coalgebras to model non-simply connected homotopy types.

\begin{definition} \label{categoricalcoalgebra}
An \textit{$R$-categorical coalgebra} consists of a tuple $(C, \partial, \Delta,  h)$ where
\begin{enumerate}
    \item $(C,\Delta)$ is a graded coassociative counital coalgebra that is flat as an $R$-module
    \item $\partial:C \rightarrow C$ is a degree $-1$ coderivation of the coproduct $\Delta$
    \item $h: C \rightarrow R$ is a linear map of degree $-2$ satisfying $h\circ \partial =0$ and \[\partial^2 = (h \otimes \id) \circ (\Delta - \Delta^{\text{op}}),\] i.e. \textit{$h$ is a curvature for $(C,\partial,
    \Delta)$}
    \item The set \[\mathcal{S}(C) = \{ c \in C : \Delta(x)=x\otimes x \text{ and } \varepsilon(x)=1 \},\]
    of ``set-like'' elements in $C$ is non-empty and
    \[C_0 \cong R[\mathcal{S}(C)]\]
    \item The natural projection $\epsilon: C\rightarrow C_0$ satisfies $ \epsilon \circ \partial= 0 $
\end{enumerate}
\end{definition}
A categorical coalgebra $(C,\partial,\Delta,h)$ has a natural $C_0$-bi-comodule structure with structure maps given by
\[\begin{tikzcd}
	\rho_r:C & {C\otimes C} & {C\otimes C_0} \\
	\rho_l:C & {C\otimes C} & {C_0 \otimes C}
	\arrow["\Delta", from=1-1, to=1-2]
	\arrow["{\text{id} \otimes \epsilon}", from=1-2, to=1-3]
	\arrow["\Delta", from=2-1, to=2-2]
	\arrow["{\epsilon \otimes \text{id} }", from=2-2, to=2-3].
\end{tikzcd}\]

We denote by $C_0$-$\mathsf{biComod}$ the category of all $C_0$-bi-comodules whose underlying $R$-module is flat. This becomes a monoidal category when equipped cotensor product $\square_{C_0}$.

A morphism of categorical coalgebras $(C,\partial,\Delta,h)$ and $(C',\partial',\Delta', h')$ consists of a pair $f=(f_0,f_1)$ where
\begin{enumerate}
    \item $f_0:(C,\Delta)\rightarrow (C',\Delta')$ is a morphism of graded coalgebras 
    \item $f_1: C \rightarrow C_0^\prime$ is a $C_0^{\prime}$-bicomodule map of degree $-1$
\end{enumerate}
satisfying
\begin{align*}
    & f_0\circ \partial = \partial' \circ f_0 + (\overline{f_1} \otimes f_0)\circ(\Delta - \Delta^{\text{op}}) \text{ and}\\
    & h'\circ f_0 = h+ \overline{f}_1 \circ \partial + (\overline{f}_1\otimes \overline{f}_1)\circ \Delta , 
\end{align*}
where $\overline{f}_1 = \varepsilon' \circ f_1$, and $\varepsilon'$ is the counit of $C'$. The composition of two morphisms $(f_0,f_1),(g_0,g_1)$ of categorical coalgebras is defined as
\begin{equation}
    (g_0,g_1)\circ(f_0,f_1)= (g_0 \circ f_0, g_1\circ f_0 + g_0 \circ f_1).
\end{equation}
We denote by $\mathsf{cCoalg}_R$ the category of categorical coalgebras.

\begin{example}\label{chainsondeltan} \textit{The categorical coalgebra of simplicial normalized chains on the $n$-simplex.} Let $\mathbb{\Delta}^n$ be the standard $n$-simplex considered as a simplicial set. Denote by $(C_\bullet(\mathbb{\Delta}^n), \partial, \Delta)$ be the dg $R$-coalgebra of normalized simplicial chains on $\mathbb{\Delta}^n$ with 
\[ \partial= \partial_k \colon C_k(\mathbb{\Delta}^n) \to  C_{k-1}(\mathbb{\Delta}^n) \]
the usual simplicial boundary map given by the alternating sum of the face maps
\[ \partial_k = \sum_{i=0}^k (-1)^i\partial_{k,i},\]
and 
\[ \Delta \colon C_\bullet(\mathbb{\Delta}^n) \to C_\bullet(\mathbb{\Delta}^n) \otimes C_\bullet(\mathbb{\Delta}^n) \]
the Alexander-Whitney diagonal approximation. For any $0 \leq k \leq n$, the $R$-module $C_k(\mathbb{\Delta}^n)$ has a canonical basis given by the non-degenerate simplices of $\mathbb{\Delta}^n_k$ all of which have non-degenerate faces. For any non-degenerate simplex $\sigma \in \mathbb{\Delta}^n_k$ define a new map
\[ \widetilde{\partial_k}(\sigma)= \sum_{i=1}^k \partial_{k,i}(\sigma) \]
by removing the first and last terms of $\partial_k$. This defines a new differential
\[ \widetilde{\partial} \colon C_\bullet(\mathbb{\Delta}^n) \to  C_{\bullet-1}(\mathbb{\Delta}^n),\]
which now satisfies $\epsilon \circ \widetilde{\partial}=0$. In fact, $(C_\bullet(\mathbb{\Delta}^n), \widetilde{\partial}, \Delta, h=0)$ defines a categorical coalgebra. See \cite[Example 4.8]{holstein-lazarev}.  
\end{example}

\begin{example}\label{categoricalchains} \textit{The categorical coalgebra of singular chains on a space.} The normalized singular chains on a topological space may be regarded as a categorical coalgebra as we now explain. Recall that the normalized singular chains functor 
\[C_{\bullet}: \mathsf{Top} \rightarrow \mathsf{dgCoalg}_R\]
assigns to a space $X$ its dg coalgebra of normalized singular chains $(C_{\bullet}(X),\partial , \Delta)$, where the coproduct is given by the Alexander-Whitney diagonal approximation. This does not define a categorical coalgebra with curvature $0$ since, in general, $\epsilon \circ \partial \neq 0$. The modification in the example above of removing the first and last term of the differential does not induce a well defined map since, in general, a non-degenerate simplex may have degenerate faces.  However, one may proceed as follows. Let \[e: R\langle \mathsf{Top}(\Delta^1,X) \rangle \rightarrow R\] be the linear map determined by sending degenerate $1$-simplices to $0 \in R$, and sending non-degenerate $1$-simplices to $1 \in R$. This induces a linear map on normalized chains $\widetilde{e}:C_1(X)\rightarrow R$. Define a linear map $\widetilde{\partial} \colon C_\bullet(X) \to C_{\bullet-1}(X)$ and a cochain $h: C_2(X) \rightarrow R$ by
\[
\begin{array}{cc}
    \widetilde{\partial} = \partial - (\id \otimes e - e\otimes \id) \circ \Delta    \\
      h = (e \otimes e) \circ \Delta + e \circ \partial
\end{array}
\]
A straightforward check yields that $\mathcal{C}(X)=(C_\bullet(X),\widetilde{\partial},\Delta,h)$ defines a categorical coalgebra. Furthermore, this construction defines a functor 
\[ \mathcal{C} \colon \mathsf{Top} \to \mathsf{cCoalg}_R.\]
\end{example}

\subsection{Differential graded categories} A \textit{differential graded (dg) category} is a category enriched over the monoidal category $(\mathsf{Ch}_R, \otimes_R)$ of dg $R$-modules. A \textit{non-negatively graded dg category} is a category enriched over the monoidal category $(\mathsf{Ch}^{\geq 0}_R, \otimes_R)$ of differential non-negatively graded $R$-modules. A morphism $f \colon A \to A'$ of dg categories is called a \textit{quasi-equivalence} if it induces quasi-isomorphisms on all dg $R$-modules of morphisms and if the induced functor of categories $H_0(f) \colon H_0(A) \to H_0(A')$, obtained by applying the $0$-th homology functor on morphisms, is an equivalence of categories.  We denote by $\mathsf{dgCat}_R$ ($\mathsf{dgCat}^{\geq 0}_R$) the category of small (non-negatively graded) dg categories over $R$. 

\begin{example} \label{categoriesofpaths}\textit{The dg category of paths on a space.} Any topological space $X$ gives rise to a dg category $\mathbf{P}^{\square}X$ as follows. Consider the category $\mathbf{P}X$ of (Moore) paths on a space $X$, whose objects are the points of $X$  and whose spaces of morphisms are defined as
\[\mathbf{P}X(x_0,x_1) = \{ (\gamma, l) \in PX : \gamma (0)=x_0, \gamma(l)=x_1 \}.\] Composition is given by concatenation of paths and adding corresponding parameters, and the identity at $x \in X$ is given by $(c_x, 0)$ where $ c_x \colon \{0\} \to X$ is the constant loop at $x$. This may be regarded as a topological category, i.e. a category enriched over the monoidal category $(\mathsf{Top}, \times)$,  by equipping each morphism set with the topology induced by the compact-open topology. Applying the monoidal functor of normalized singular cubical chains $C_{\bullet}^{\square} \colon \mathsf{Top} \to \mathsf{Ch}_R$ on each morphism space of $\mathbf{P}X$ we obtain a dg category over $R$, which we will call the \textit{dg category of paths on} $X$ and denote by $\mathbf{P}^{\square}X$. This construction yields a functor \[\mathbf{P}^{\square}: \mathsf{Top}\rightarrow \mathsf{dgCat}_R^{\geq 0}\] that sends weak homotopy equivalences of spaces to quasi-equivalences of dg categories.
\end{example}


\subsection{Cobar functor}\label{cobarfunctor} The categorical coalgebra of chains and the dg category of paths are related via the \textit{cobar functor}
\[ \mathbf{\Omega}: \mathsf{cCoalg}_R \rightarrow \mathsf{dgCat}_R^{\geq 0},\]
which we now recall. 

Let $(C,\Delta , \partial , h)$ be a categorical coalgebra.  For any $x \in \mathcal{S}(C)$ denote by $i_x: R \to C_0=R[\mathcal{S}(C)]$ be the linear map determined by $i_x(1_R)= x$. This gives rise to a $C_0$-bicomodule structure on $R$ through the  maps $$R \cong R \otimes R \xrightarrow{i_x \otimes \text{id}_{R}} C_0 \otimes R$$ and $$R \cong R \otimes R \xrightarrow{\text{id}_R \otimes i_x} R \otimes C_0.$$ We denote this $C_0$-bicomodule by $R_x$ and its generator by $\text{id}_x.$

Let $\overline{C}= \bigoplus\limits_{i\geq1} C_i$, so that $C= \overline{C} \oplus C_0$, and let $s^{-1}\overline{C}$ the graded $R$-module obtained by shifting $\overline{C}$ by $-1$. We have the following three degree $-1$ maps
\begin{enumerate}
\item $\overline{\partial}\colon s^{-1}\overline{C} \to s^{-1}\overline{C}$,
\item $\overline{\Delta}\colon s^{-1}\overline{C} \to s^{-1}\overline{C} \otimes s^{-1}\overline{C}$, and
\item $\overline{h}\colon s^{-1}\overline{C} \xrightarrow{s^{+1}} \overline{C} \xrightarrow{\rho_r} C \otimes C_0 \xrightarrow{h \otimes \textit{id}} R \otimes C_0 \cong C_0$
\end{enumerate}
induced by $\partial, \Delta$ and $h$, respectively. 

The objects of $\mathbf{\Omega}C$ are defined to be all the elements of $\mathcal{S}(C)$. For any two $x,y \in \mathcal{S}(C)$ define a non-negatively graded $R$-module by
$$\mathbf{\Omega}C(x,y)= \bigoplus_{i=0}^{\infty} R_x \underset{C_0}{\msquare} (s^{-1}\overline{C})^{\square i} \underset{C_0}{\msquare} R_y,$$
where $(s^{-1}\overline{C})^{\square i} $ denotes the $i$-fold cotensor product of $C_0$-bicomodules and $(s^{-1}\overline{C})^{\square 0}=C_0$.
We will use the notation $\{c_1| \cdots |c_p\}$
to denote a generator
\[ \text{id}_x \square s^{-1}c_1 \square \cdots \square s^{-1}c_p \square \text{id}_y \in \mathbf{\Omega}C(x,y). \] 
%
The differential $$D_{x,y}: \mathbf{\Omega}C(x,y)_k \to \mathbf{\Omega}C(x,y)_{k-1}$$ is defined by extending
\[\overline{h} + \overline{\partial} + \overline{\Delta} \colon R_x \square s^{-1}\overline{C} \square R_y \to
(R_x \square C_0 \square R_y )\oplus
(R_x \square s^{-1}\overline{C} \square R_y) \oplus (R_x \square (s^{-1}\overline{C})^{\square 2} \square R_y)
\]
as a ``derivation" to monomials of arbitrary length. Note that when we apply these maps the Koszul sign rule introduces signs. It follows directly from the definition of a categorical coalgebra that $D_{x,y} \circ D_{x,y}=0$. 
The composition in $\mathbf{\Omega}C$ is given by concatenation of monomials. For every $x \in \mathcal{S}(C)$, $1_{R} \in  R_x \cong R_x \square C_0 \square R_x \subset \mathbf{\Omega}C(x,x)_0$ is the identity morphism.

Given a morphism $f= (f_0,f_1): C \to C'$ between categorical coalgebras, define a morphism
\[\mathbf{\Omega}f= \mathbf{\Omega}(f_0,f_1)\colon \mathbf{\Omega}C \to \mathbf{\Omega}(C') \]
of dg categories as follows. Since $f_0: C\to C'$ is a map of coalgebras, $f_0$ restricts to a map of sets $\mathcal{S}(C) \to \mathcal{S}(C')$, which defines the functor $\mathbf{\Omega}(f_0,f_1)$ on objects. For any two $x,y \in \mathcal{S}(C) $
define 
\[ \mathbf{\Omega}(f_0,f_1)_{x,y} \colon \mathbf{\Omega}C(x,y) \to \mathbf{\Omega}(C')(f_0(x), f_0(y)) \]
by extending the map 
\begin{eqnarray*} R_x  \square  s^{-1} \overline{C} \square R_y \longrightarrow (R_{f_0(x)} \square s^{-1} \overline{C} \square R_{f_0(y)}) \oplus (R_{f_0(x)} \square C_0' \square  R_{f_0(y)})
\\
\{c\} \longmapsto \{f_0(c)\} + \text{id}_{f_0(x)} \square f_1(c) \square \text{id}_{f_0(y)}
\end{eqnarray*}
``multiplicatively" to monomials $\{c_1 | \cdots | c_p \}$ of arbitrary length. 
Note that $R_{f_0(x)} \square C' _0 \square  R_{f_0(y)}$ is a non-trivial $R$-module if and only if $f_0(x)=f_0(y)$, in which case it is isomorphic to $R$. Hence, $\text{id}_{f_0(x)} \square f_1(c) \square \text{id}_{f_0(y)}$ may be identified with a scalar. It follows directly from the definition of morphisms between categorical coalgebras that each $\mathbf{\Omega}(f_0,f_1)_{x,y}$ is a chain map and that composition is compatible with differentials. We shall consider categorical coalgebras under the following notion of weak equivalence.
\begin{definition}
A morphism of categorical coalgebras $f \colon C \to C'$ is called a $\mathbf{\Omega}$-\textit{quasi-equivalence} if $\mathbf{\Omega}f \colon \mathbf{\Omega}C \to \mathbf{\Omega}C'$ is a quasi-equivalence of dg categories.
\end{definition}

\subsection{The Hochschild complex of a dg category} Let $A$ be a dg $R$-category all of whose morphisms complexes $A(x,y)$ are $R$-flat (i.e. $A$ is locally $R$-flat). Denote the coalgebra $R[\text{Obj}(A)]$ by $A_0$. We regard $A$ as a monoid $\mathcal{M}({A})$ in the monoidal category of $(A_0$-$\mathsf{biComod}, \square_{A_0})$ as follows. The underlying dg $R$-module is given by
    \be
        \mathcal{M}({A}) = \bigoplus_{x,y \in \text{Obj}(A)}{A}(x,y).
    \ee
 The source and target maps of $A$ induce an $A_0$-bicomodule structure 
\[ \mathcal{M}(A) \to A_0 \otimes \mathcal{M}(A) \] and
\[\mathcal{M}(A) \to \mathcal{M}(A) \otimes A_0 .
\]
The monoid structure 
\[\mathcal{M}(A) \underset{A_0}{\msquare} \mathcal{M}(A) \to \mathcal{M}(A) \]
is induced by the composition of morphisms in $A$ and the unit map
$A_0\to \mathcal{M}(A)$ is determined by $x \mapsto \text{id}_x \in A(x,x)_0$ for all $x \in \text{Obj}(A).$

We will also make use of the following notion for subsequent constructions. Let $C$ be a dg $R$-coalgebra, $A$ be a monoid in the monoidal category $(C\text{-}\mathsf{biComod}, \square_C)$ and $E,F$ be dg right and left $C$-comodules, equipped with right and left dg $A$-module structures, respectively. The \textit{tensor product of $E$ and $F$ over $A$} is defined to be the dg $R$-module
    \be\label{def:tensor-over-monoid}
        E\underset{A}{\bigotimes} F = \text{coker} (\rho_E \underset{C}{\square} \id_F - \id_E \underset{C}{\square} \rho_F : E\underset{C}{\square} A \underset{C}{\square} F),
    \ee
    where $\rho_E \colon E \to E \otimes C$ and $\rho_F \colon F \to C \otimes F$ are the right and left $C$-comodule structure maps of $E$ and $F$, respectively.

\begin{definition}
    Let $A$ be a dg category and let $(M,d_M)$ and $(N,d_N)$ be right and left dg modules, respectively, over $\mathcal{M}(A)$ in the monoidal category $(A_0\text{-}\mathsf{biComod}, \square_{A_0})$. Define the \textit{two-sided bar construction of $M$ and $N$ over $A$} as the dg $A_0$-bicomodule 
    \[\mathcal{B}(M,\mathcal{M}(A),N)\]
    whose underlying graded module is 
    \be
        \bigoplus\limits_{i=0}^{\infty} \Big( M \underset{A_0}{\square} \big(s^{+1}\overline{\mathcal{M}(A)}\big)^{\square i} \underset{A_0}{\square} N \Big),
    \ee
    where $\overline{\mathcal{M}(A)} = \mathcal{M}(A)/u(A_0)$ and $u:A_0\rightarrow \mathcal{M}(A)$ the unit map. We denote generators
    \[m\square s^{+1}a_1 \square \cdots \square s^{+1}a_p\square n,\]
    where $m\in M, n\in N$, and $a_i \in \overline{\mathcal{M}(A)}$ for each $i$, by the usual ``bar'' notation
    \[ m[a_1 \vert \cdots \vert a_p] n .\]
    The grading is defined by 
    \[ m[a_1 \vert \cdots \vert a_p] n \in \mathcal{B}(M,\mathcal{M}(A),N)_r \text{ if } r=\abs{n}+\abs{m}+ \sum\limits_{i=1}^p \abs{a_i} + p ,\]
     where $\abs{n},\abs{m},\abs{a_i}$, denotes the degree in $N,M$, and $A$, accordingly.
     The differential
     \be
        \delta_{M,\mathcal{M}(A),N} : \mathcal{B}(M,\mathcal{M}(A),N)_\bullet \rightarrow \mathcal{B}(M,\mathcal{M}(A),N)_{\bullet -1}
     \ee
     is defined by
     \be
     \delta_{\mathsf{M},\mathcal{M}(A),\mathsf{N}}= d_{\mathsf{M}} \square \text{id}_{\mathcal{M}(A)} \square \text{id}_{\mathsf{N}} + \text{id}_{\mathsf{M}} \square d_{\mathcal{M}(A)}\square \text{id}_{\mathsf{N}} + \text{id}_{\mathsf{M}}  \square \text{id}_{\mathcal{M}(A)} \square d_{\mathsf{N}} + \theta, \ee
    where $d_{\mathsf{M}}, d_{\mathsf{N}}$, and $d_{\mathcal{M}(A)}$ are the differentials of $\mathsf{M}, \mathsf{N}$, and $\mathcal{M}(A)$, respectively, and $\theta$ is given by the following formula
\begin{align*}
    \theta( m [a_1 \big| \cdots \big| a_p]n)= ~ & m\cdot a_1 [a_2\big| \cdots \big| a_p] n \\&+ \sum_{i=1}^{p-1}  
    \pm m [a_1\big| \cdots \big| a_i \cdot a_{i+1} \big| \cdots \big| a_p]n \\ &\pm m [a_1\big| \cdots \big| a_{p-1}]a_p\cdot m,
\end{align*}
where, as usual, the signs are given by the Koszul sign rule.
A routine computation yields $\delta_{\mathsf{M},\mathcal{M}(A),\mathsf{N}}^2=0$. 
\end{definition}
\begin{definition}
    Let $A$ be a dg category. The \textit{Hochschild complex of} $A$ is the dg $R$-module
    \be
        (\textnormal{Hoch}_\bullet(A), \delta) = \mathcal{B}(\mathcal{M}(A), \mathcal{M}(A), \mathcal{M}(A) ) \underset{\mathcal{M}(A) \otimes \mathcal{M}(A)^{\text{op}}}{\bigotimes} \mathcal{M}(A).
    \ee
  We unravel this definition in more detail. As a graded $R$-module, this is the same as
    \be
    \bigoplus\limits_{i=0}^{\infty} \Big( \big(s^{+1}\overline{\mathcal{M}}(A)\big)^{\square i} \underset{A_0 \otimes A_0^{\text{op}}}{\msquare}  \mathcal{M}(A)  \Big), 
    \ee
 so generators are given by monomials written as
    \[ [a_1 \vert \cdots \vert a_p]a_{p+1}, \]
    where $a_{p+1} \in \mathcal{M}(A)$, $a_i \in \overline{\mathcal{M}}(A)$ for $i=1, \dots, p$, and the source and target of consecutive $a_i$'s coincide, together with the source of the zeroth and target of the last terms, i.e. if $\mathbf{s}$ and $\mathbf{t}$ denote source and target, respectively, then
    \[ \mathbf{s}(a_i) = \mathbf{t}(a_{i-1}) \text{ for } i= 1,\dots, p, \text{ and } \mathbf{s}(a_{1})=\mathbf{t}(a_{p+1}).\]
    The total grading is given by
    \be [a_1 \vert \cdots \vert a_p ]a_{p+1} \in \text{Hoch}_r(A) \text{ if } r = \big( \sum\limits_{i=1}^{p+1} \abs{a_i} \big) + p ,
    \ee
    where $\abs{a_i}$ is the degree of $a_i$ in $\mathcal{M}(A)$. The differential
    \[ \delta: \Hoch_\bullet (A) \rightarrow \Hoch_{\bullet -1}(A) \]
    is defined on generators $ [a_1 \vert \cdots \vert a_p ]a_{p+1}$ by
    \begin{align*}
        \delta ( [a_1 \vert \cdots \vert a_p]a_{p+1}) = ~& \sum\limits_{i=1}^{p+1} \pm [a_1 \vert \cdots \vert d_{\mathcal{M}(A)} a_i \vert \cdots \vert a_p]a_{p+1} \\
        & \pm [a_2 \vert \cdots \vert a_p]a_{p+1}a_1 + \sum\limits_{i=1}^{p-1} \pm [a_1 \vert \cdots \vert a_i a_{i+1} \vert \cdots \vert a_p]a_{p+1} \pm [a_1|\cdots a_{p-1}]a_pa_{p+1},
    \end{align*}
    where the signs are given by the Koszul sign rule and where we have written composition as concatenation. 
\end{definition}
\subsection{The coHochschild complex of a categorical coalgebra} \label{cohochschild} We describe a construction, dual to the Hochschild complex of a dg category, that associates a chain complex to any categorical coalgebra. This construction is a generalization of the coHochschild complex of a counital dg coalgebra as discussed in \cite{Doi, hesscohochschild, hess-shipley}.

\begin{definition}\label{def:q-complex}
    Let $(C, \partial_C, \Delta_C,h_C)$ be a categorical coalgebra and let $(M,d_M)$ and $(N,d_N)$ be right and left modules, respectively, over $\mathcal{M}(\mathbf{\Omega}C)$ in the category $(C_0\text{-}\mathsf{biComod}, \square_{C_0})$. (Note that, by definition of $\mathbf{\Omega}$, $(\mathbf{\Omega}C)_0=C_0$.)
    Define a graded $R$-module by
    \be
    \mathcal{Q}(\mathsf{M}, C, \mathsf{N}):= \mathsf{M} \underset{C_0}{\square} C \underset{C_0}{\square}  \mathsf{N} 
    \ee
    and a degree $-1$ linear map
    \be  d_{\mathcal{Q}}=
d_{\mathsf{M}} \square \text{id}_{C} \square \text{id}_{\mathsf{N}} + \text{id}_{\mathsf{M}} \square \partial_C \square \text{id}_{\mathsf{N}} + \text{id}_{\mathsf{M}}  \square \text{id}_{C} \square d_{\mathsf{N}} + \theta',  \ee
where
\be \theta'(m \square c \square n)=
\pm ( m \cdot \{c'\} ) \square c'' \square n \pm m \square  c' \square (\{c''\} \cdot n).\ee
    and the symbols $c',c''$ are terms coming from the coproduct of $C$, expressed in Sweedler notation.
\end{definition}
\begin{definition}
   For any categorical coalgebra $C$ define the \textit{coHochschild complex of} $C$ by
    \be (\coHoch_\bullet(C),d) =  \mathcal{M}(\mathbf{\Omega}C) \underset{\mathcal{M}(\mathbf{\Omega}C) \otimes \mathcal{M}(\mathbf{\Omega}C)^{\text{op}}}{\bigotimes}  \mathcal{Q}(\mathcal{M}(\mathbf{\Omega}C), C, \mathcal{M}(\mathbf{\Omega}C)), \ee
    where $d$ is induced by $d_{\mathcal{M}(\mathbf{\Omega}C)}$ and $d_{\mathcal{Q}}$.
\end{definition}
We unravel this definition in more detail. The underlying graded $R$-module is given by
    \be C\underset{C_0\otimes C_0^{\text{op}}}{\square } \mathcal{M}(\mathbf{\Omega}C), \ee
    so it is generated by monomials $c_0 \square s^{-1}c_1 \square \cdots \square s^{-1}c_p$,
    which we write as \[c_0 \{ c_1 | \cdots | c_p \},\] where $c_0 \in C$, $c_i \in \overline{C}=C_{>0}$ for $i=1,\dots, p$, and $c_p \otimes c_0 \in C \underset{C_0}{\square} C$. The total grading is given by
    \be  c_0 \{ c_1 | \cdots | c_p \} \in \coHoch_r(C) \text{ if } r= \big(\sum\limits_{i=0}^{p} \abs{c_i}\big) -p ,  \ee
    where $\abs{c_i}$ is the degree of $c_i$ in $C$. The differential 
    \[ d \colon \textnormal{Hoch}_\bullet(C) \to \textnormal{Hoch}_{\bullet-1}(C)\]  is defined on generators by
    \begin{align*} d( c_0 \{ c_1 | \cdots | c_p \})=& ~\partial_Cc_0 \{ c_1 |\cdots | c_p \}
+\sum_{i=1}^p \pm c_0 \{ c_1 | \cdots|\partial_Cc_i | \cdots | c_p\} \\
&+\sum_{i=1}^{p} \pm c_0 \{c_1 | \cdots | \overline{h_C}(c_i) | \cdots |c_p\}
\\
&+ \sum_{(c_0)} \pm c_0' \{ c_0''| c_1 | \cdots | c_p\}
+ \sum_{i=1}^p \sum_{(c_i)} \pm c_0 \{ c_1 | \cdots |c_i' | c_i''|  \cdots  |c_p\}\\
&+ \sum_{(c_0)} \pm c_0'' \{ c_1 | \cdots| c_p |c_0'\}. 
\end{align*} 
where the signs are given by the Koszul sign rule, and we use Sweedler notation for the coproduct $\Delta_C$.
Even if the graded $R$-module $\mathcal{Q}(M, C, N)$ may not be a chain complex in general, when $M=N=\mathcal{M}(\mathbf{\Omega}C)$ a routine computation (using the curvature term) yields that $d_{\mathcal{Q}}$ does square to zero. Passing to $\coHoch_\bullet(C)$ we obtain an honest differential $d$, making $\coHoch_\bullet(C)$ into a dg $R$-module. This construction is functorial with respect to morphisms of categorical coalgebras.

\subsection{A natural chain contraction between the Hochschild and coHochschild complexes} \label{comparingresolutions}
We define a chain contraction between the coHochschild complex of a categorical coalgebra $C$ and the Hochschild complex of the dg category $\mathbf{\Omega}C$. This will be induced by a more fundamental chain contraction between \[\mathcal{Q}(\mathcal{M}(\mathbf{\Omega}C), C, \mathcal{M}(\mathbf{\Omega}C))\text{ and }\mathcal{B}(\mathcal{M}(\mathbf{\Omega}C) , \mathbf{\Omega}C, \mathcal{M}(\mathbf{\Omega}C) ).\] 

\begin{definition}\label{def:alpha-pi}
  For any categorical coalgebra $C$, we define maps 
\[
\begin{tikzcd}
	 {\mathcal{B}(\mathcal{M}(\mathbf{\Omega}C), \mathcal{M}(\mathbf{\Omega}C), \mathcal{M}(\mathbf{\Omega}C))}\arrow[loop left, distance=3em, start anchor={[yshift=-1ex]west}, end anchor={[yshift=1ex]west}]{}{H}& {\mathcal{Q}(\mathcal{M}(\mathbf{\Omega}C), C, \mathcal{M}(\mathbf{\Omega}C)).}
	\arrow["\pi", shift left=2, from=1-1, to=1-2]
	\arrow["\alpha", shift left=2, from=1-2, to=1-1]
\end{tikzcd}
\]
as follows.

The map $\pi$ is defined on generators $a_0[a_1 \big| \cdots \big| a_p]a_{p+1} $ by
\be \pi( a_0 [a_1 \big| \cdots \big| a_p]  a_{p+1})= 0 \text{ if $p>1$ }, \ee
and when $p=1$, writing $a_1=\{c_1| \cdots |c_q\}$, 
\begin{equation}
   \pi(a_0 [ \{c_1 | \cdots |c_q\}]a_2) = 
   \begin{cases}
       \sum_{i=1}^q a_0 \{c_1| \cdots |c_{i-1}\} \square [\{c_i\}] \square \{c_{i+1} | \cdots | c_q\} a_2 \text{, if } q>0,\\
       \pi(a_0 [ \id_x ] a_2) = a_0 \square x \square a_2 \text{, if } q=0 \text{ and }  x \in \mathcal{S}(C) \subset C_0.
   \end{cases}
\end{equation}
The map $\alpha$ is defined on generators $a \square c \square b$ by
\begin{align*}
    \alpha(a \square c \square b)= &~a [\{ c\} ] b + \sum a [\{c'\} \big| \{c''\}]b \\
    &+ \sum a [\{c'\} \big| \{c''\} \big| \{c'''\}]b + \cdots,
\end{align*} 
where we have used Sweedler notation for the coproduct of $C$. Note that $\alpha $ is well defined since the iterated coproduct eventually vanishes by degree reasons.

Define \[H:\mathcal{B}(\mathcal{M}(\mathbf{\Omega}C), \mathcal{M}(\mathbf{\Omega}C), \mathcal{M}(\mathbf{\Omega}C))  \to \mathcal{B}(\mathcal{M}(\mathbf{\Omega}C), \mathcal{M}(\mathbf{\Omega}C), \mathcal{M}(\mathbf{\Omega}C)) \] to be the degree $+1$ linear map given on a generator $a_0[a_1 \big| \cdots \big| a_p]a_{p+1}$ as follows. Write $a_1=\{c_1|\cdots|c_m\}$ and let
\[ H( a_0[\{c_1|\cdots|c_m\} \big| a_2 \big| \cdots \big| a_p]a_{p+1}) =0  \text{ if $m<2$ } ,\]
\begin{eqnarray*}
H(a_0[ \{c_1|c_2\} \big| a_2 \big| \cdots \big| a_p]a_{p+1})= a_0[\{c_1\} \big| \{c_2\} \big| a_2 \big| \cdots \big| a_p]a_{p+1} + \\
\sum a_0[\{c_1'\} \big| \{c_1''\} \big| \{c_2\} \big| a_2 \big| \cdots \big| a_p]a_{p+1} + \\
\sum a_0[\{c_1'\} \big| \{c_1''\} \big|\{c_1'''\}\big| \{c_2\} \big| a_2 \big| \cdots \big| a_p]a_{p+1} + \cdots \text{ if $m=2$, }
\end{eqnarray*}
and, if $m >2$, let
\begin{eqnarray*}
H( a_0[\{c_1|\cdots|c_m\} \big| a_2 \big| \cdots \big| a_p]a_{p+1}) =
\\
  \sum_{i=1}^{m}a_0 \cdot \{c_1| \cdots |c_{i-1}\} [ \{c_i \} \big| \{c_{i+1}| \cdots | c_m\} \big| a_2 \big| \cdots \big| a_p]a_{p+1} +
\\
\sum_{i=1}^{m-1}a_0 \cdot \{c_1| \cdots |c_{i-1}\} [ \{c_i' \} \big| \{c_i''\} \big| \{c_{i+1}| \cdots | c_m\} \big| a_2 \big| \cdots \big| a_p]a_{p+1} +
\\
\sum_{i=1}^{m-1}a_0\cdot \{c_1| \cdots |c_{i-1}\} [ \{c_i' \} \big| \{c_i''\} \big| \{c_i'''\} \big| \{c_{i+1}| \cdots | c_m\} \big| a_2 \big| \cdots \big| a_p]a_{p+1} + \cdots
\end{eqnarray*}

The following proposition follows from a routine (but tedious) computation, which may be found in \cite{tolosa-thesis}.

\begin{proposition}\label{chaincontraction} The maps $\alpha$ and $\pi$ are natural chain maps and $\pi \circ \alpha=\textnormal{id}_{\mathcal{Q}}$. Furthermore, $H$ is a chain homotopy between $\alpha \circ \pi $ and $\textnormal{id}_{\mathcal{B}}$, i.e. the data $(\pi, \alpha, H)$ defines a natural chain contraction of chain complexes. 
\end{proposition}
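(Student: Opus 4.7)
The plan is to verify the three assertions in the proposition by direct computation on generators, with the identity $\pi \circ \alpha = \text{id}_\mathcal{Q}$ providing the conceptual anchor and the remaining two statements amounting to careful bookkeeping of the atomic pieces of the differentials. Naturality in $C$ is transparent from the formulas, since $\alpha$, $\pi$, and $H$ are built functorially out of the structure maps $\partial_C$, $\Delta_C$, $h_C$ and the concatenation product in $\mathbf{\Omega}C$.

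First, $\pi \circ \alpha = \text{id}_\mathcal{Q}$ is immediate: $\alpha$ sends a generator $a \square c \square b$ to a sum of bar monomials of every bracket-length $p \geq 1$, but $\pi$ annihilates everything of bracket-length $p \neq 1$; the unique surviving term is $a[\{c\}]b$, and the single-bracket formula for $\pi$ on a length-one cobar letter returns $a \square c \square b$.

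To show $\alpha$ is a chain map, I would expand both $\delta_\mathcal{B} \circ \alpha$ and $\alpha \circ d_\mathcal{Q}$ on a generator $a \square c \square b$ by decomposing each differential into its atomic pieces: the internal differential $d_{\mathcal{M}(\mathbf{\Omega}C)}$, which itself splits into the contributions of $\overline{\partial}$, $\overline{\Delta}$, and $\overline{h}$ from the cobar construction; the internal boundary $\partial_C$ on the middle slot in $\mathcal{Q}$; the left/right actions $\theta'$ in $\mathcal{Q}$; and the bar multiplication $\theta$ in $\mathcal{B}$. The matching proceeds term-by-term via three structural facts: (i) coassociativity of $\Delta_C$ ensures that the iterated coproduct sums underlying $\alpha$ telescope correctly, so that $\partial_C$ applied in the middle of $\mathcal{Q}$ corresponds to $\overline{\partial}$ applied at any of the singleton letters in the output of $\alpha$; (ii) composition in $\mathbf{\Omega}C$ is concatenation of monomials, so $\theta(\{c_{(1)}\} \mid \{c_{(2)}\}) = \{c_{(1)} \mid c_{(2)}\}$ cancels the $\overline{\Delta}$-part of the cobar differential applied to $\{c\}$; (iii) the curvature identity $\partial_C^2 = (h_C \otimes \text{id}) \circ (\Delta_C - \Delta_C^{\text{op}})$ reconciles the contribution of $\overline{h}$ with the surviving cross-terms. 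The chain map property of $\pi$ is established similarly: since $\pi$ vanishes on bracket-lengths $\neq 1$, one only needs to check the terms of $\delta_\mathcal{B}$ that either preserve length $1$ or produce a length-$1$ output from length-$0$ or length-$2$ input, and these match the pieces of $d_\mathcal{Q}$ via the same three facts applied in reverse.

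For the homotopy identity $\delta_\mathcal{B} H + H \delta_\mathcal{B} = \alpha \pi - \text{id}_\mathcal{B}$, the conceptual content of $H$ is to undo one step of bar multiplication: on a monomial whose first bar-letter is a cobar word $\{c_1 \mid \cdots \mid c_m\}$, $H$ breaks this word into singletons at every position and prepends the iterated coproducts of each split letter. Applying $\delta_\mathcal{B}$ after $H$ then yields a telescoping sum in which the $\theta$-multiplications between adjacent split pieces cancel pairwise against the $\overline{\Delta}$-pieces of the cobar differential; the two surviving endpoints of the telescope produce the original monomial on one side and the value of $\alpha \circ \pi$ on the input on the other. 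The main obstacle, and the reason the full computation is lengthy, is the sign bookkeeping under the Koszul rule together with the boundary cases $m = 0, 1, 2$ in the definition of $H$ and the careful accounting of how curvature contributions $\overline{h}$ produce the scalar identity terms that must be tallied against $\text{id}_\mathcal{B}$. The complete term-by-term verification is carried out in \cite{tolosa-thesis}.
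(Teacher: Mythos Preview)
Your proposal is correct and takes the same approach as the paper: the paper itself offers no argument beyond the sentence ``follows from a routine (but tedious) computation, which may be found in \cite{tolosa-thesis},'' and you likewise defer the full term-by-term verification to that reference while supplying a more detailed sketch of how the cancellations organize. Your outline of the mechanism---coassociativity for the $\overline{\partial}$ terms, concatenation-versus-$\overline{\Delta}$ telescoping, and the curvature identity handling $\overline{h}$---is accurate and more informative than what the paper provides.
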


\end{definition}
The maps $\pi$ and $\alpha$ induce chain maps 
\begin{eqnarray}\label{overlinepi}
\overline{\pi}= \pi \underset{\mathcal{M}(\mathbf{\Omega}C) \otimes \mathcal{M}(\mathbf{\Omega}C)^{\text{op}}}{\bigotimes} \text{id}_{\mathcal{M}(\mathbf{\Omega}C)} \quad\text{ and }\quad \overline{\alpha}= \alpha \underset{\mathcal{M}(\mathbf{\Omega}C) \otimes \mathcal{M}(\mathbf{\Omega}C)^{\text{op}}}{\bigotimes} \text{id}_{\mathcal{M}(\mathbf{\Omega}C)}
\end{eqnarray}

between Hochschild and coHochschild complexes. The chain homotopy $H$ also induces a degree $+1$ linear map \[\overline{H}= H  \underset{\mathcal{M}(\mathbf{\Omega}C) \otimes \mathcal{M}(\mathbf{\Omega}C)^{\text{op}}}{\bigotimes} \text{id}_{\mathcal{M}(\mathbf{\Omega}C)}.\]
The following is an immediate consequence of  Proposition \ref{chaincontraction}.

\begin{corollary}\label{hoch-cohoch} For any categorical coalgebra $C$, we have a natural chain contraction of chain complexes
\[
\begin{tikzcd}
	 {(\textnormal{Hoch}_\bullet( \mathbf{\Omega}(C), \delta) }\arrow[loop left, distance=3em, start anchor={[yshift=-1ex]west}, end anchor={[yshift=1ex]west}]{}{\overline{H}}& {(\textnormal{coHoch}_\bullet(C), d).}
	\arrow["\overline{\pi}", shift left=2, from=1-1, to=1-2]
	\arrow["\overline{\alpha}", shift left=2, from=1-2, to=1-1]
\end{tikzcd}
\]
\end{corollary}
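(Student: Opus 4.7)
The plan is to obtain the corollary by applying the functor
\[
(-) \underset{\mathcal{M}(\mathbf{\Omega}C)\otimes\mathcal{M}(\mathbf{\Omega}C)^{\textnormal{op}}}{\bigotimes} \mathcal{M}(\mathbf{\Omega}C)
\]
to the chain contraction $(\pi,\alpha,H)$ supplied by Proposition \ref{chaincontraction}, and then invoking functoriality and naturality. Concretely, by the very definitions of $\textnormal{Hoch}_\bullet(\mathbf{\Omega}C)$ and $\textnormal{coHoch}_\bullet(C)$ given in Sections 3.4 and 3.5, the target complexes are exactly the images of $\mathcal{B}(\mathcal{M}(\mathbf{\Omega}C),\mathcal{M}(\mathbf{\Omega}C),\mathcal{M}(\mathbf{\Omega}C))$ and $\mathcal{Q}(\mathcal{M}(\mathbf{\Omega}C),C,\mathcal{M}(\mathbf{\Omega}C))$ under this tensor product, with induced differentials given by $\delta$ and $d$ respectively.

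First I would check that $\pi$, $\alpha$, and $H$ are maps of $\mathcal{M}(\mathbf{\Omega}C)$-bimodules in the monoidal category $(C_0\textnormal{-}\mathsf{biComod},\square_{C_0})$; inspection of the formulas in Definition \ref{def:alpha-pi} shows that all three maps leave the outermost entries $a_0$ and $a_{p+1}$ (respectively $a$ and $b$) untouched and only act on the inner content, so they commute with the left and right $\mathcal{M}(\mathbf{\Omega}C)$-actions. This bimodule compatibility is precisely what is needed in order for $\pi$, $\alpha$ and $H$ to descend to well-defined maps $\overline{\pi}$, $\overline{\alpha}$, and $\overline{H}$ after taking the coequalizer defining $(-)\bigotimes_{\mathcal{M}(\mathbf{\Omega}C)\otimes \mathcal{M}(\mathbf{\Omega}C)^{\textnormal{op}}}\mathcal{M}(\mathbf{\Omega}C)$ (cf.\ \ref{def:tensor-over-monoid}), as already noted in \ref{overlinepi}.

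Next I would observe that tensoring over a monoid preserves chain maps, identities, and chain homotopies: applying the functor to the identities $\pi\circ\alpha=\textnormal{id}_{\mathcal{Q}}$ and $\textnormal{id}_{\mathcal{B}}-\alpha\circ\pi=[\delta_{\mathcal{B}},H]$ from Proposition \ref{chaincontraction} immediately yields
\[
\overline{\pi}\circ\overline{\alpha}=\textnormal{id}_{\textnormal{coHoch}_\bullet(C)} \qquad\text{and}\qquad \textnormal{id}_{\textnormal{Hoch}_\bullet(\mathbf{\Omega}C)}-\overline{\alpha}\circ\overline{\pi}=[\delta,\overline{H}].
\]
Naturality in $C$ follows because Proposition \ref{chaincontraction} asserts that $(\pi,\alpha,H)$ is natural and because the constructions $\textnormal{Hoch}_\bullet\circ\mathbf{\Omega}$ and $\textnormal{coHoch}_\bullet$ are functorial in $C$ through $\mathbf{\Omega}$ and the bar/$\mathcal{Q}$ machinery.

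The only genuinely substantive input is Proposition \ref{chaincontraction} itself (which the authors defer to \cite{tolosa-thesis}); the passage from that proposition to the present corollary is entirely formal, amounting to the verification of bimodule-equivariance of $\pi$, $\alpha$, and $H$ and the elementary fact that tensoring preserves chain contractions. I would therefore present the proof as: (i) recall the definitions exhibiting $\textnormal{Hoch}_\bullet(\mathbf{\Omega}C)$ and $\textnormal{coHoch}_\bullet(C)$ as the relevant tensor products; (ii) check bimodule-equivariance of the triple $(\pi,\alpha,H)$; (iii) push Proposition \ref{chaincontraction} through the tensor product to obtain the desired chain contraction $(\overline{\pi},\overline{\alpha},\overline{H})$.
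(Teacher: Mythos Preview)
Your approach is correct and coincides with the paper's: the authors simply declare the corollary an ``immediate consequence of Proposition~\ref{chaincontraction}'' after defining $\overline{\pi},\overline{\alpha},\overline{H}$ as the maps obtained by tensoring $\pi,\alpha,H$ with $\textnormal{id}_{\mathcal{M}(\mathbf{\Omega}C)}$ over $\mathcal{M}(\mathbf{\Omega}C)\otimes\mathcal{M}(\mathbf{\Omega}C)^{\textnormal{op}}$, and your proposal spells out exactly why this passage is legitimate. One small correction: your claim that $H$ ``leaves the outermost entry $a_0$ untouched'' is not literally true---the formula for $H$ when $m>2$ replaces $a_0$ by $a_0\cdot\{c_1|\cdots|c_{i-1}\}$---but the conclusion that $H$ is a left $\mathcal{M}(\mathbf{\Omega}C)$-module map still holds by associativity of the monoid action, so the argument goes through unchanged.
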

\section{Chain models for path spaces} \label{chainmodels}
\subsection{Chains}
In this section, we introduce two models for the dg $R$-module of chains on a space. In the first model, chains are parameterized by the cartesian product $F_n \times I^m$ of a freehedron and a cube. In the second model, chains are parameterized by the cartesian product $\Delta^n \times I^m$ of a simplex and a cube. These arise as the natural setting to relate (co)Hochschild constructions and loop spaces.

Following the labelling of the faces of a freehedron described in \ref{faces_and_breaks}, we introduce the following notation for formal sums of the faces of $F_n$. For any $n$-string \[s = \{ s_0 | \cdots |s_{l-1} \} s_l \{ s_{l+1} | \cdots | s_k \}\] let
    \begin{align*}
        &d^{0}_i(s) = \sum \pm \{ s_0 | \cdots |s_{l-1} \} s_l \{ s_{l+1} | \cdots | s_{i}^{1} | s_{i}^{2}|\cdots| s_k \}, \quad 0\leq i\leq k, ~i\neq l,l+1 , \\
        &d^{0}_{l}(s) = \sum \pm \{ s_0 | \cdots |s_{l-1} | s_l^1\} s_l^2 \{ s_{l+1} | \cdots | s_k \},\\
        &d^{0}_{l+1}(s) = \sum \pm \{ s_0 | \cdots |s_{l-1}\} s_l^1\{ s_l^2 | s_{l+1} | \cdots | s_k \},\\
    \end{align*}
    where the sums are over possible partitions of $s_{i}$ of the form $ s_{i}^{1} = \{ a\in s_i | a\leq x\}$, and $s_{i}^{2} = \{ a\in s_i | a\geq x\}, $ with $\min(s_{i}) < x< \max(s_{i})$. Similarly, we define the formal sum
    \begin{equation*}
        d^{1}_{i}(s) = \sum\limits_{j}   \pm \{ s_0 |\cdots | \hat{s}_i^{j} |\cdots |s_{l-1} \} s_l \{ s_{l+1} | \cdots | s_k \}, \quad i=0,\dots , k,
    \end{equation*}
    where $\hat{s}_i^{j}$ denotes the subset of the ordered set $s_i$ obtained by omitting the $j$-th element. We now explain how the signs above are determined. The key observation is that any  subset $s_j$ of $\{0,\ldots,n\}$ corresponds exactly to a non-degenerate simplex in the simplicial set $\mathbb{\Delta}^n$, so a string $s = \{ s_0 | \cdots |s_{l-1} \} s_l \{ s_{l+1} | \cdots | s_k \}$ corresponds to a generator of \begin{eqnarray}\label{Qdelta} \mathcal{Q}( \mathcal{M}(\mathbf{\Omega}C_\bullet(\mathbb{\Delta^n})), C_\bullet(\mathbb{\Delta^n}), \mathcal{M}(\mathbf{\Omega}C_\bullet(\mathbb{\Delta^n}))),
    \end{eqnarray} 
  where $C_\bullet(\mathbb{\Delta}^n)$ is the categorical coalgebra of example \ref{chainsondeltan}. In fact, note that $s_j$ determines a simplex in $C^{\Delta}_{|s_j|}(\mathbb{\Delta}^n)$ and the condition $\text{max }s_i=\text{min }s_{i+1}$ is equivalent to $s_i \otimes s_{i+1}$ being in $C_\bullet(\mathbb{\Delta^n}) \square_{C^{\Delta}_0(\mathbb{\Delta^n})} C_\bullet(\mathbb{\Delta^n})$. Under this identification, each map $d^0_i$ corresponds to applying the Alexander-Whitney coproduct (with appropriate grading shifts) and each $d^1_i$ corresponds to applying the map $\widetilde{\partial}$ (with appropriate grading shifts). Thus the signs above exactly correspond to the signs of each term in the differential of \ref{Qdelta}, which are determined by the Koszul sign rule. In fact, we can put $d^{0}$ and $d^{1}$ together in a formal sum
    \be
    \partial^F  = \sum\limits_{i=0}^k  \big( d^{1}_i- d^{0}_i \big),
    \ee
    so that, under the above correspondence, $\partial^F$ corresponds exactly to the differential of \ref{Qdelta}. Thus, verifying
    \be
    \partial^F \circ \partial^F  = 0
    \ee
    is analogous to checking the differential of \ref{Qdelta} squares to zero. Lastly, let
    \begin{eqnarray}\label{Fboundary}
        \partial^{F\times \square} = \partial^F \times \id_{I^m} - \id_{F_n}\times \partial^{\square},
    \end{eqnarray}
   where $\partial^{\square}$ is the cubical boundary map. 
   
   We now define certain codegeneracy maps between freehedra in order to normalize this construction. Observe that the vertices of any freehedron $F_n$ are labeled by expressions 
    \[ \{ s_0 | \cdots | s_{l-1}\} s_l \{ s_{l+1} | \cdots | s_k \}\] 
    where $\abs{s_i} = 2$ if $i\neq l$ and $\abs{s_l}=1$. Let $\zeta^i \colon [n] \to [n-1], i=0,...,n-1$, be the codegeneracy maps of the simplicial category, i.e.
    \begin{equation*}
        \zeta^i(j) = \begin{cases}
            j \quad \text{ if } j\leq i\\
            j-1 \quad \text{ if } j >i,
        \end{cases}
    \end{equation*}
    extended to subsets $s_j \subseteq \{0,...,n\}$ in the obvious way. We can extend these maps further to a map from the vertices of $F_n$ to the vertices of $F_{n-1}$ given by
    \[ \zeta^i(\{ s_0 | \cdots | s_{l-1}\} s_l \{ s_{l+1} | \cdots | s_k \} ):= \{ \zeta^i(s_0) | \cdots | \zeta^i(s_{l-1})\} \zeta^i(s_l) \{ \zeta^i(s_{l+1}) | \cdots | \zeta^i(s_k) \}, \]
    where if $s_j = \{ i,i+1 \}$ then $\zeta^i(s_j)=\{ i\}$ is omitted from the expression, e.g.
    \[
    \zeta^1(\{\}0\{01|12\}) = \{\}0\{01\}.
    \]
    A choice of a continuous cellular extension to the freehedra produces maps
    \[
    \zeta^i : F_n \rightarrow F_{n-1} \quad i\in 0, ... , n-1.
    \]
   The maps just defined induce $R$-linear maps
    \[
    \zeta_i : R\langle \mathsf{Top} (F_n \times I^m, X)\rangle \rightarrow R\langle \mathsf{Top} (F_{n+1} \times I^m, X) \rangle \quad i\in 0, ... , n-1
    \]
    by precomposition with $\zeta^i \times \id_I^{m}$, which are compatible with $\partial^{F \times \square}$. Similarly, we denote by 
    \[
    \zeta^{\square}_i : R\langle\mathsf{Top} (F_n \times I^m, X)\rangle \rightarrow R\langle\mathsf{Top} (F_n\times I^{m+1}, X)\rangle
    \]
    the cubical degeneracy maps. 
    \begin{definition}
    For any topological space $X$, define a dg $R$-module by setting
    \be
        C^{F \times \square}_r(X) := \bigoplus\limits_{n+m=r} R\langle\mathsf{Top}(F_n \times I^m, X) \rangle / \langle \text{Im}(\zeta, \zeta^{\square}) \rangle
    \ee
    where $F_n$ is the $n$-dimensional freehedron and $I^m$ is the $m$-dimensional cube, with differential induced by
    \[ \partial^{F\times \square} : C^{F \times \square}_\bullet(X) \rightarrow C^{F \times \square}_{\bullet -1}(X),  \]
   as described in \ref{Fboundary}.   
    \end{definition}
We now define a second version of the complex of normalized chains.
\begin{definition}\label{def:prismatic-chains}
 For any topological space $X$, define a dg $R$-module $(C^{\Delta \times \square}_\bullet(X),\partial^{\Delta \times \square})$ as follows. The underlying graded $R$-module is defined as
    \be
        C^{\Delta \times \square}_r(X) := \bigoplus\limits_{n+m=r} R\langle \mathsf{Top} (\Delta^n \times I^m, X)\rangle/ \langle \text{Im}(\zeta^\Delta, \zeta^I)\rangle
    \ee
    where $\Delta^n$ is the nth simplex, $I^m$ is the $m$-dimensional cube, and $\zeta^\Delta, \zeta^I$ are the degree $+1$ linear maps induced by the usual simplicial and cubical degeneracy maps. The differential
    \be
        \partial^{\Delta \times \square} : C^{\Delta \times \square}_\bullet(X) \rightarrow C^{\Delta \times \square}_{\bullet - 1}(X)
    \ee
    is induced by the map 
    \be
        \partial^{\Delta \times \square} = \partial^\Delta \times \id_{I^m} - \id_{\Delta^n}\times \partial^{\square},
    \ee
    where $\partial^\Delta$ is the simplicial boundary map and $\partial^{\square}$ the cubical boundary map. 
\end{definition} 

\subsection{Models for path spaces}
We construct two maps (\ref{Gmap0} and \ref{Fmap0} below) relating the constructions of section \ref{comparingresolutions}, the two chain models described in the previous section, and path spaces. We will make use of a map first introduced by Adams \cite{adams}. We follow the explicit description given in \cite{RM}. Begin by defining a collection of continuous maps
\[
\{  \Theta_n \colon I^{n-1} \rightarrow \mathbf{P}(\Delta^n)(v_0,v_1)\}_{n \in \mathbb{N}} 
\]
where $\Delta^n$ is the topological $n$-simplex and $\mathbf{P}(\Delta^n)(v_0,v_n)$ is the space of paths in $\Delta^n$ from the $0$-th vertex $v_0$ to the $n$-th vertex $v_n$, see \ref{categoriesofpaths}.

Given $y,z \in \mathbb{R}^n$ denote by
\[
\gamma (y,z) \colon [0, \abs{z-y}] \rightarrow \mathbb{R}^{n+1}
\]
the straight line path form $y$ to $z$ parameterized by arc-length, concretely
\[
\gamma(y,z)(s) = y + \frac{s}{\abs{z-y}}(z-y).
\]
For any $w =(w_1,...,w_{n-1}) \in I^{n-1}$ we define $p_1(w),...,p_{n-1}(w) \in \Delta^n$ inductively by
\begin{align*}
    & p_1(w)= v_0 + w_1 (v_1 - v_0),\\
    & p_j(w) = p_{j-1}(w) + w_j ( v_j - p_{j-1}(w)) .
\end{align*}
Define $l_n\colon I^{n-1} \rightarrow \mathbb{R}$ by
\[
l_n(w) = \abs{ p_1(w) -v_0 } + \abs{ p_2(w) - p_1(w) } + \cdots + \abs{ v_n - p_{n-1}(w) },
\]
and
\[
\Theta_n(w) \colon [0 , l_n(w)]\rightarrow \mathbf{P}(\Delta^n)(v_0,v_n)
\]
as the piecewise linear path given by concatenating the straight line segments connecting the ordered sequence of points $v_0,p_1(w),...,p_{n}(w), v_n$, i.e.
\[
\Theta_n(w) = \gamma (v_0,p_1(w)) * \gamma(p_1(w),p_2(w)) * \cdots * \gamma(p_{n}(w),v_n) .
\]
We define a morphism of dg categories
\[
\Theta \colon \mathbf{\Omega}(\mathcal{C}(X)) \rightarrow \mathbf{P}^{\square}X,
\]
where $\mathbf{P}^{\square}$ is defined in \ref{categoriesofpaths}, as follows. The two dg categories have the same objects so the map between objects is defined to be the identity. Given any morphism \[\sigma= \{ \sigma _1 | \ldots | \sigma_k \} \in \mathbf{\Omega}(\mathcal{C}(X))(x,y),\] where each $\sigma_i \colon \Delta^{n_i} \to X$ is a singular simplex representing a normalized class in $\mathcal{C}_{n_i}(X)$,  define
\[\Theta(\sigma) = (P(\sigma_1) \circ \Theta_{n_1}) * \cdots * (P(\sigma_k) \circ \Theta(n_k)) \in \mathbf{P}(X)(x,y). \]
In the above formula, $P \colon \mathsf{Top} \to \mathsf{Top}$ denotes the path space functor described in the preliminaries section in \ref{preliminaries}, so that we have induced maps $P(\sigma_i) \colon P (\Delta^{n_i}) \to P(X)$.  

A fundamental result, extending a classical theorem of Adams \cite{adams}, is the following many-object version of a theorem of Rivera and Zeinalian \cite{rivera2018cubical}. See also \cite{riverasaneblidzepath, MRZ, holstein-lazarev}.

\begin{theorem} \label{adamsthm}
    For any topological space $X$, $\Theta \colon \mathbf{\Omega}(\mathcal{C}(X)) \to \mathbf{P}^{\square}X$ is a natural quasi-equivalence of dg categories. 
\end{theorem}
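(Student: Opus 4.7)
The plan is to reduce this many-object statement to the one-object Rivera-Zeinalian theorem via a change-of-basepoint argument. Both $\mathbf{\Omega}\mathcal{C}(X)$ and $\mathbf{P}^\square X$ have the set of points of $X$ as their object set, so essential surjectivity of $H_0(\Theta)$ is automatic, and the task reduces to verifying that for every pair $x,y \in X$ the chain map
\[
\Theta_{x,y} \colon \mathbf{\Omega}\mathcal{C}(X)(x,y) \longrightarrow C_\bullet^{\square}\bigl(\mathbf{P}X(x,y)\bigr)
\]
is a quasi-isomorphism. When $x$ and $y$ lie in distinct path components of $X$, both complexes vanish: the right-hand side trivially because $\mathbf{P}X(x,y)=\emptyset$, and the left-hand side because any generating monomial $\{\sigma_1|\cdots|\sigma_k\}$ in the cotensor product $R_x \square s^{-1}\overline{\mathcal{C}(X)}^{\square k}\square R_y$ requires a chain of singular simplices whose successive endpoints link $x$ and $y$, contradicting the hypothesis.

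When $x$ and $y$ lie in the same path component, choose a Moore path $\gamma\in\mathbf{P}X(x,y)$, regarded simultaneously as a singular $1$-simplex $\widetilde{\gamma}$ in $C_1(X)$, so that $\{\widetilde{\gamma}\}$ is a degree-$0$ cycle in $\mathbf{\Omega}\mathcal{C}(X)(x,y)_0$ thanks to the modified differential of Example~\ref{categoricalchains}. Right concatenation with $\gamma$ induces a homotopy equivalence $\Omega_x X\xrightarrow{\simeq}\mathbf{P}X(x,y)$, and right composition with $\{\widetilde{\gamma}\}$ in $\mathbf{\Omega}\mathcal{C}(X)$ gives a chain map $\mathbf{\Omega}\mathcal{C}(X)(x,x)\to\mathbf{\Omega}\mathcal{C}(X)(x,y)$ which will be shown to be a quasi-isomorphism since $[\{\widetilde{\gamma}\}]$ represents an invertible element in $H_0$. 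Because $\Theta$ is a dg functor and $\Theta(\{\widetilde{\gamma}\})=\gamma$ by the definition of $\Theta_1$, these concatenations intertwine $\Theta$ strictly, producing a commutative square
\[
\begin{tikzcd}
\mathbf{\Omega}\mathcal{C}(X)(x,x) \ar[r, "\simeq"] \ar[d, "\Theta_{x,x}"'] & \mathbf{\Omega}\mathcal{C}(X)(x,y) \ar[d, "\Theta_{x,y}"] \\
C_\bullet^\square(\Omega_x X) \ar[r, "\simeq"] & C_\bullet^\square(\mathbf{P}X(x,y)).
\end{tikzcd}
\]
The left vertical arrow is the classical Rivera-Zeinalian quasi-isomorphism \cite{rivera2018cubical}, so the right vertical arrow is a quasi-isomorphism by two-out-of-three.

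The main technical obstacle is showing that right composition with $\{\widetilde{\gamma}\}$ is genuinely a chain-level quasi-isomorphism. The key point is that $H_0(\mathbf{\Omega}\mathcal{C}(X))$, viewed as an ordinary $R$-linear category, is isomorphic to the fundamental groupoid algebra of $X$; this can be proved directly by identifying $H_0$ as the quotient of tensor products of $1$-simplices by the concatenation relations imposed by $2$-simplices, then recognizing this presentation as that of $R[\pi_1(X;x,y)]$. Once $[\{\widetilde{\gamma}\}]$ is known to be invertible in $H_0$, the mapping cone of $(-)\cdot\{\widetilde{\gamma}\}$ is acyclic by a standard argument promoting $H_0$-invertibility to a quasi-isomorphism of dg bimodules over the locally $R$-flat category $\mathbf{\Omega}\mathcal{C}(X)$. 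Naturality of the entire construction in $X$ is immediate from the naturality of $\mathcal{C}$, $\mathbf{P}^\square$, and the explicit formula defining $\Theta$ in terms of the Adams maps $\Theta_n$.
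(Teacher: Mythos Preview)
Your reduction to the diagonal case is sound: the change-of-basepoint square commutes because $\Theta$ is a dg functor, the bottom horizontal map is induced by a homotopy equivalence, and the top horizontal map is a quasi-isomorphism once you know $[\{\widetilde{\gamma}\}]$ is invertible in $H_0(\mathbf{\Omega}\mathcal{C}(X))$, which does identify with the fundamental groupoid algebra. The genuine gap is the claim that the left vertical arrow $\Theta_{x,x}\colon \mathbf{\Omega}\mathcal{C}(X)(x,x)\to C_\bullet^\square(\Omega_x X)$ \emph{is} the Rivera--Zeinalian quasi-isomorphism of \cite{rivera2018cubical}. That theorem concerns the cobar construction of the \emph{pointed connected} dg coalgebra $C_*(X,x)$ (degree zero equal to $R$), whose underlying graded algebra is the free tensor algebra on $s^{-1}C_{>0}(X)$ with no compatibility conditions on the vertices of the simplices. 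By contrast, $\mathbf{\Omega}\mathcal{C}(X)(x,x)$ is built from cotensor products over $C_0(X)=R[X]$: its generators are \emph{composable} sequences of simplices whose chain of endpoints starts and ends at $x$. These are genuinely different dg algebras, with no evident comparison map intertwining the two Adams-type maps, so the one-object theorem does not plug into your square. Your argument therefore shows only that the maps $\Theta_{x,y}$ for $x,y$ in a fixed path component are quasi-isomorphisms \emph{simultaneously or not at all}; it does not settle any individual case.

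The paper's proof does not attempt such a reduction. It identifies $\mathbf{\Omega}\mathcal{C}(X)(x,y)$, for arbitrary $x,y$, with the normalized cubical chains on an explicit cubical set $\mathbf{\Omega}^\square X(x,y)$ (built from necklaces of singular simplices), and then shows that the induced map $|\mathbf{\Omega}^\square X(x,y)|\to \mathbf{P}X(x,y)$ is a weak equivalence by realizing it as the map on fibers between two quasi-fibrations over $X$ with contractible total spaces. This is the argument of \cite{riverasaneblidzepath}, and it handles the diagonal and off-diagonal cases uniformly; the diagonal case is no easier than the general one from this point of view.
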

\textit{Sketch of Proof.} The theorem boils down to proving that, for any $x,y \in X$, the map $\Theta$ induces a quasi-isomorphism of chain complexes \[ \Theta_{x,y} \colon \mathbf{\Omega}(\mathcal{C}(X))(x,y) \to \mathbf{P}^{\square}X(x,y).\]
The chain complex $\mathbf{\Omega}(\mathcal{C}(X))(x,y)$ is \textit{isomorphic} to the normalized cubical chains on a \textit{cubical set} $\mathbf{\Omega}^{\square}X(x,y)$. The map $\Theta_{x,y}$ is induced by a continuous map of spaces 
\[ \omega_{x,y} \colon |\mathbf{\Omega}^{\square}X(x,y)| \to \mathbf{P}X(x,y).\] The map $\omega_{x,y}$ can be shown to be a weak homotopy equivalence by constructing a map between two quasi-fibrations over $X$ with contractible total space for which  $\omega_{x,y}$ sits as the map between the fibers. We refer to \cite[Theorem 1]{riverasaneblidzepath} for details. \qed
\\

For simplicity, from now on denote  \[\mathcal{A}(X)=\mathcal{M}( \mathbf{\Omega}(\mathcal{C}(X)))\] considered as a monoid in the monoidal category $(\mathcal{C}_0(X)\text{-}\mathsf{biComod}, \square)$. Similarly, denote \[\mathcal{P}(X) = \mathcal{M}(\mathbf{P}^{\square}(X))\]
also considered as a monoid in $(\mathcal{C}_0(X)\text{-}\mathsf{biComod}, \square)$ via concatenation of paths. The map discussed above may be regarded as a monoid map
\[\Theta \colon \mathcal{A}(X) \to \mathcal{P}(X)\]
in $(\mathcal{C}_0(X)\text{-}\mathsf{bicomod},\square)$. We will also consider the topological space
\[
    P^*X = \{ (\gamma,l,\gamma(z)) \in PX \times X | (\gamma,l) \in PX \text{ and } z \in [0,l]\}.
\]
For any $\gamma = (\gamma,l,\gamma(z)) \in P^*X$ we call $\gamma(z)$ the \textit{marked point} of $\gamma$. We denote \[P^*X(x,y)= \{(\gamma,l,\gamma(z)) \in P^*X | \gamma(0)=x, \gamma(l)=y\}.\]

 The dg $R$-modules  $C_\bullet^{F \times \square}(P^*X)$ and $C_\bullet^{\Delta \times \square}(P^*X)$ have natural $\mathcal{C}_0(X)$-bicomodules given by evaluating a path at is starting point or end point. Furthermore, these have a natural $\mathcal{P}(X)$-bimodule structure given by concatenation of paths. Hence, Adams' map $\Theta$ induces a natural dg $\mathcal{A}(X)$-bimodule structures on these two $\mathcal{C}_0(X)$-bicomodules. 

We now construct a map of dg  $\mathcal{A}(X)$-bimodules
\begin{eqnarray}\label{Gmap0}
\mathcal{G}:\mathcal{B}(\mathcal{A}(X),\mathcal{A}(X),\mathcal{A}(X)) \rightarrow \mathcal{P}(X)\square {C_\bullet^{\Delta \times \square}(P^*X)} \square \mathcal{P}(X).
\end{eqnarray}
On any generator $ a_0 [a_1 \vert \dots \vert a_n]a_{n+1}$ in $\mathcal{B}(\mathcal{A}(X),\mathcal{A}(X),\mathcal{A}(X))$, this map is given by
\[\mathcal{G}(a_0[a_1 | \cdots | a_n]a_{n+1})= \Theta(a_0) \square \widetilde{\mathcal{G}}([a_1 | \cdots | a_n]) \square \Theta(a_{n+1}),\]
where, on any $u=(u_0,\dots,u_n) $ in $\Delta^n$ (given in barycentric coordinates) and $w=(w_1, \dots, w_{n}) $ in $  I^{|a_1|} \times \cdots \times I^{|a_{n}|}$, we have
\[\widetilde{\mathcal{G}}([a_1 \vert \dots \vert a_n])(u,w) = \left( \gamma_1 * \cdots *\gamma_{n}  , \gamma_1 * \dots * \gamma_n \left( \sum\limits_{j=1}^{n} l_j - \sum\limits_{0 \leq i < j \leq n} u_il_j   \right) \right),\]
for $\gamma_j=(\gamma_j, l_j)=\Theta(a_j)(w_j)$. The map $\mathcal{G}$ is a version of the map $\lambda$ defined in \cite[Section V.1]{Goodwillie}.

We also construct a map of dg  $\mathcal{A}(X)$-bimodules
\begin{eqnarray}\label{Fmap0}
    \mathcal{F}:\mathcal{Q}(\mathcal{A}(X), \mathcal{C}(X),\mathcal{A}(X)) \rightarrow \mathcal{P}(X)\square {C_\bullet^{F \times \square}(P^*X)} \square \mathcal{P}(X).
    \end{eqnarray}
On any generator $a\square c \square b $ in $\mathcal{Q}(\mathcal{A}(X), \mathcal{C}(X),\mathcal{A}(X))$, this map is given by 
\[ \mathcal{F}(a \square c \square b)= 
\Theta(a) \square \widetilde{\mathcal{F}}(c) \square \Theta(b),
\]
where $\widetilde{\mathcal{F}} (c) \colon F_{|c|} \to P^*X$ is defined as follows.  Since $F_{\abs{c}}$ is a subdivided cube, we may describe a point in $F_{\abs{c}}$ as a point in the cube $I^{\abs{c}}$, say $(t,u)=(t,u_1, \dots, u_{n-1})$. Then 
\be
\widetilde{\mathcal{F}}(c)(t,u) = \left( \gamma_c, \gamma_c(l_c t)  \right),
\ee
where $\gamma_c=(\gamma_c,l_c) = \Theta(c)(u)$. The map $\mathcal{F}$ is a version of the map $\Upsilon$ constructed in \cite[Theorem 2]{rivera-saneblidze}. Routine checks yield that $\mathcal{G}$ and $\mathcal{F}$ are chain maps. 

\subsection{A geometric model for the map $\pi$}
We model the map
\[\pi \colon \mathcal{B}(\mathcal{A}(X), \mathcal{A}(X),\mathcal{A}(X)) \to \mathcal{Q}(\mathcal{A}(X), \mathcal{C}(X),\mathcal{A}(X)) \]
of \ref{def:alpha-pi} in terms of a natural map \[\Upsilon \colon C^{F \times \square}_\bullet (P^*X) \to C^{\Delta \times \square}_\bullet(P^*X)\]
arising from the combinatorics of the Goodwillie polytopes. The maps $\pi$ and $\Upsilon$ will be compatible via $\mathcal{F}$ and $\mathcal{G}$.

The map $\Upsilon$ will be induced by an explicit continuous map $\upsilon: G_n \rightarrow F_n$ inducing the map of abstract polytopes $\rho: \fancy{G}_n \rightarrow \fancy{F}_n $ described \ref{def:rho}, which we now describe. The subdivision of $G_n$ into prisms described in section \ref{gpolytopes} is indexed by all subsets $S$ of $\{2, ..., n\}$, where
\[
G_n^S \iso \Delta^{m} \times I^{k},
\]
$m=\abs{S}+1$, and $k=n-m$. We define a family of maps 
\[
\upsilon_{S} : \Delta^{\abs{S}+1} \times I^{n-(\abs{S}+1)} \rightarrow F_n
\] 
that ``glue together'' and determine $\upsilon$.
Recall that each $\Theta_{n}(w)$ (defined in previous section) has an associated continuous function $l_n \colon I^{n-1} \to \mathbb{R}$ giving the length of the parameterizing interval. 
Writing $S = \{s_1 ,..., s_r\} \subset \{2,...,n\}$ with $s_1< \cdots< s_r$ we obtain a partition  $\{1,...,s_1 \},\{ s_1+1,...,s_2\},...,\{s_{r-1},...,s_r \},\{s_r,...,n\}$ of $\{1,...,n\}$. Let $n_1,...,n_{\abs{S}+1}$ be the cardinality of the sets in this partition, respectively. 
If $(u,w) \in \Delta^{m} \times I^{k}$, where $u=(u_0,...,u_m)$ is given in barycentric coordinates, define
\[
\upsilon_{S}(u,w) = \left( \frac{1}{\sum\limits_{j=1}^{m} l_{n_j}(w)} \left( \sum\limits_{j=1}^{m} l_{n_j}(w) ~ - \sum\limits_{0\leq i <j 
\leq m} u_i ~ l_{n_j}(w) \right) , w^S \right),
\]
where $w^S$ is the image of $w$ under the embedding of $I^k$ into $I^n$ that inserts a zero in the $i$-th coordinate for every $i+1$ in $S$. Concretely, if $w=(w_1,...,w_k)$, and $w^S=(w_1^S,...,w_n^S)$, then 
\begin{equation}\label{def:w_S} 
w_i^S = \begin{cases}
    0 \quad \text{ if } i+1 \in S,\\
    w_j \quad  j=i~ - \sum\limits_{\substack{j \leq i \\ j \in S}} 1, \text{ otherwise.}
\end{cases}
\end{equation}
\begin{definition}
For any topological space $Y$, we define a natural chain map
    \[
    \Upsilon \colon C^{F \times \square}_\bullet(Y) \rightarrow C^{\Delta\times \square}_\bullet(Y)
    \]
  by declaring
    \[
    \Upsilon(\sigma ) = \sum\limits_{S \subset \{2,...,n\}}  \Upsilon_S(\sigma)  :=\sum\limits_{S \subset \{2,...,n\}}  \sigma \circ (\upsilon_S \times \id_{I^m} ),
    \]
     for any $\sigma: F^n \times I^m \rightarrow   Y$.
\end{definition}
\begin{theorem} \label{thmresolutions} Let $X$ be a topological space. The maps defined above fit into the following commutative diagram of natural quasi-isomorphisms of dg $\mathcal{A}(X)$-bimodules
\[\begin{tikzcd}
 {\mathcal{Q}(\mathcal{A}(X), \mathcal{C}(X),\mathcal{A}(X)) } & {\mathcal{P}(X) \square C^{F \times \square}_\bullet(P^*X) \square \mathcal{P}(X)} \\
	{\mathcal{B}(\mathcal{A}(X),\mathcal{A}(X),\mathcal{A}(X))} & {\mathcal{P}(X) \square C_\bullet^{\Delta \times \square}(P^*X) \square \mathcal{P}(X) } 
	\arrow["\alpha"', from=1-1, to=2-1]
	\arrow["{\mathcal{F}}", from=1-1, to=1-2]
	\arrow["{\mathcal{G}}"', from=2-1, to=2-2]
	\arrow["\Upsilon", from=1-2, to=2-2]
\end{tikzcd}\]
\end{theorem}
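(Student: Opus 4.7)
The plan is to verify the square commutes strictly on generators, and then deduce the four quasi-isomorphism claims. Naturality in $X$ is automatic since each constituent map is built from functorial constructions ($\mathcal{C}$, $\mathbf{\Omega}$, $\mathbf{P}^{\square}$, Adams' $\Theta$, the bar and $\mathcal{Q}$ constructions, and the cellular maps $\upsilon_S$). Strict commutativity is the heart of the theorem: it encodes precisely why the Goodwillie polytope was engineered as in Section \ref{gpolytopes}, and once it is in hand, three of the four maps are already known (or readily shown) to be quasi-isomorphisms, forcing the fourth by a $2$-out-of-$3$ argument.

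For commutativity, I take a generator $a\,\square\,c\,\square\,b \in \mathcal{Q}(\mathcal{A}(X),\mathcal{C}(X),\mathcal{A}(X))$ where $c$ is represented by a singular $n$-simplex $\sigma\colon \Delta^n \to X$. The map $\alpha$ expands $a\,\square\,c\,\square\,b$ as $\sum_{k\geq 0}\sum a[\{c^{(0)}\}|\cdots|\{c^{(k)}\}]b$, where by iterating the Alexander--Whitney formula the inner sum is indexed by $k$-element subsets $\{i_1<\cdots<i_k\}\subset\{1,\dots,n-1\}$, with each $c^{(j)}$ the subsimplex $\sigma|_{[v_{i_j},\dots,v_{i_{j+1}}]}$. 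Applying $\mathcal{G}$ to each summand produces a chain $\Delta^{k+1}\times I^{n-k-1}\to P^*X$ whose underlying path is the Adams concatenation $\Theta(\{c^{(0)}\})*\cdots*\Theta(\{c^{(k)}\})$ and whose marked point lies at arc-length $\sum_j l_{n_j}(w)-\sum_{i<j}u_i l_{n_j}(w)$. On the other route, $\mathcal{F}(a\,\square\,c\,\square\,b)$ is a chain on $P^*X$ parameterized by $F_n$, and $\Upsilon$ decomposes $F_n$ into the prismatic pieces $G_n^S\cong \Delta^{|S|+1}\times I^{n-|S|-1}$ indexed by $S\subset\{2,\dots,n\}$ via the cellular maps $\upsilon_S$ (Proposition \ref{decomp}). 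Under the bijection $\{i_1<\cdots<i_k\}\leftrightarrow \{i_1+1,\dots,i_k+1\}\subset\{2,\dots,n\}$, the two decompositions match cell by cell: the first coordinate of $\upsilon_S$ is by design the normalization (by $l_c = \sum_j l_{n_j}$) of $\mathcal{G}$'s marked-point formula, which cancels against the factor $l_c$ appearing in $\widetilde{\mathcal{F}}(c)(t,u) = (\gamma_c,\gamma_c(l_c t))$, and the zero-insertion rule \eqref{def:w_S} forces the Adams path $\Theta_n$ for $\sigma$ to reduce to the concatenation of Adams paths for the subsimplices $\sigma^{(j)}$. A check of Koszul signs using the freehedral face-transformation conventions of Section \ref{def:freehedra} completes the termwise equality.

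For the quasi-isomorphism claim, $\alpha$ is a quasi-isomorphism by Proposition \ref{chaincontraction}, since $(\pi,\alpha,H)$ is a natural chain contraction. The map $\mathcal{F}$ is a quasi-isomorphism by essentially the argument of \cite[Theorem 2]{rivera-saneblidze}, adapted to the many-object setting: a filtration by freehedral length yields an associated graded where the induced map is controlled by Adams' $\Theta$, which is a quasi-isomorphism on each morphism space by Theorem \ref{adamsthm}. The map $\mathcal{G}$ is a quasi-isomorphism by the analogous filtration-by-bar-length argument (a categorical version of \cite[Section V.1]{Goodwillie}); on the target side, the forgetful map $P^*X \to PX$ is a fiberwise homotopy equivalence, so the augmentation produces an $\mathcal{A}(X)$-bimodule resolution of $\mathcal{P}(X)$, and the associated graded of $\mathcal{G}$ is a tensor power of $\Theta$. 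With $\alpha$, $\mathcal{F}$, $\mathcal{G}$ all quasi-isomorphisms and the strict equality $\Upsilon\circ\mathcal{F}=\mathcal{G}\circ\alpha$ in hand, $\Upsilon$ is a quasi-isomorphism by $2$-out-of-$3$ applied to the composition.

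The main obstacle is the combinatorial bookkeeping in the commutativity step: matching Goodwillie's prism decomposition $G_n^S$ of Proposition \ref{decomp} against the iterated Alexander--Whitney expansion in $\alpha(c)$, and verifying strict termwise equality of the arc-length parameterizations under $\upsilon_S$ and $\mathcal{G}$ with correct Koszul signs. This is precisely where the Goodwillie polytopes earn their name: they are the cellular geometric realization that turns the algebraic chain contraction $(\pi,\alpha,H)$ of Proposition \ref{chaincontraction} into an equality of explicit chains on $P^*X$, with each prism $G_n^S$ labeling exactly one summand of the iterated coproduct.
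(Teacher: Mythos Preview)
Your commutativity argument is essentially identical to the paper's: both expand $\alpha(a\,\square\,c\,\square\,b)$ via iterated Alexander--Whitney coproducts, index the resulting summands by subsets $S\subset\{2,\dots,n\}$, and verify $\widetilde{\mathcal{G}}_S(c)=\Upsilon_S(\widetilde{\mathcal{F}}(c))$ by direct comparison of the arc-length formulas, using the zero-insertion rule \eqref{def:w_S} to identify $\Theta(c)(w^S)$ with the concatenation $\gamma_1*\cdots*\gamma_{r+1}$.

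The quasi-isomorphism strategy differs. You argue $\mathcal{F}$ and $\mathcal{G}$ directly via filtration/spectral-sequence arguments and deduce $\Upsilon$ by $2$-out-of-$3$; the paper instead argues $\Upsilon$ directly by acyclic models (both $C^{F\times\square}_\bullet$ and $C^{\Delta\times\square}_\bullet$ compute singular homology), argues $\mathcal{G}$ via a small auxiliary diagram comparing augmentations $\mathcal{B}(\mathcal{A},\mathcal{A},\mathcal{A})\to\mathcal{A}$ and $\mathcal{P}\,\square\,C^\square_\bullet(P^*X)\,\square\,\mathcal{P}\to\mathcal{P}$ through $\Theta$, and then deduces $\mathcal{F}$. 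The paper's route is somewhat cleaner: acyclic models for $\Upsilon$ is a one-line appeal, whereas your filtration argument for $\mathcal{F}$ (``freehedral length'') is only sketched and would require care to set up compatibly on the target $\mathcal{P}(X)\,\square\,C^{F\times\square}_\bullet(P^*X)\,\square\,\mathcal{P}(X)$. Your argument for $\mathcal{G}$ is close in spirit to the paper's, since both ultimately rest on the bar augmentation and the contractibility of $P^*X\to PX$.
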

\begin{proof}
    Let $ a \square c \square b $ be a generator of $ \in \mathcal{Q}(\mathcal{A}(X), \mathcal{C}(X),\mathcal{A}(X)) $. 
     We begin by describing a correspondence between terms in the sums defining $\mathcal{G}\alpha$ and $\Upsilon \mathcal{F}$. First, observe that the terms of $\mathcal{G}\alpha(a \square c \square b)$ are given by all possible iterations of the Alexander-Whitney coproduct applied to $c \in \mathcal{C}_n(X)$, and hence can be labeled by all possible expressions of the form 
     \[0 \cdots s_1 | s_1 \cdots s_2| \cdots | s_{r-1} \cdots s_r | s_r \cdots n\] 
     with $0<s_1< \cdots < s_r < \cdots < n$. These labels are in bijection with the subsets of $\{2,\dots , n \}$ through 
     \[0 \cdots s_1 | s_1 \cdots s_2| \cdots | s_{r-1} \cdots s_r | s_r \cdots n \longleftrightarrow \{ s_1 + 1, s_2+1, \ldots, s_r+1 \} .\] 
     Denote the summand of $\mathcal{G}\alpha(a \square c \square b)$ labeled by the set $S \subset \{2,...,n\}$ under the above bijection by $\mathcal{G}\alpha(a \square c \square b)_S$. 
     Concretely, say $S= \{ s_1 +1 < s_2+1 < ... < s_r +1  \}$. Then the associated summand is
     \[
     \mathcal{G}\alpha(a \square c \square b)_S = \mathcal{G}(a [ \{c' \} | \cdots | \{ c^{(r)} \} | \{ c^{(r+1)} \}  ]b)
     = \Theta(a) \square \widetilde{\mathcal{G}}([ \{c' \} | \cdots | \{ c^{(r+1)} \}  ])\square \Theta(b)
     \]
     where $\abs{c'} = s_1$, $\abs{c^{(j)}} = \abs{\{ s_{j-1} , s_{j-1} + 1 ,... , s_{j} \}}$ for $1<j\leq r$, and $\abs{c^{(r+1)}}= n - s_r$ are the degrees of the $c^{(j)}$'s in $\mathcal{C}(X)$. Let $n_j = \abs{c^{(j)}}-1$ for all $j$, and denote
    \[
    \widetilde{\mathcal{G}}_S(c) = \widetilde{\mathcal{G}}([ \{c' \} | \cdots | \{ c^{(r+1)} \}  ]) \colon \Delta^{r+1} \times I^{n_1 -1} \times \cdots \times I^{n_{r+1}-1} \rightarrow P^*X 
    \]
    with $n_1 + \cdots + n_{r+1} = n$.
     %
     By definition, the summands of $\Upsilon$ are indexed by the sets $S\subset \{  2,...,n \}$, and
     \[
     \Upsilon_S(\mathcal{F}(a\square c \square b))=\Upsilon_S(\Theta(a)\square \widetilde{\mathcal{F}}(c) \square \Theta(b))
     = \Theta(a)\square \Upsilon_S(\widetilde{\mathcal{F}}(c)) \square \Theta(b)),
     \]
     where
     \[
     \Upsilon_S(\widetilde{\mathcal{F}}(c)) \colon \Delta^{r+1} \times I^{n_1 -1} \times \cdots \times I^{n_{r+1}-1} \rightarrow P^*X.
     \]
     We claim that
     \[
     \widetilde{\mathcal{G}}_S(c) = \Upsilon_S(\widetilde{\mathcal{F}}(c))  
     \]
     for every $S \subset \{2,...,n\}$.
     In fact, if $u=(u_0,...,u_{r+1}) \in \Delta^{r+1}$ is given in barycentric coordinates, and $w=(w_1,...,w_{r+1}) \in I^{n_1 -1} \times \cdots \times I^{n_{r+1}-1}$, we have
     \[
     \widetilde{\mathcal{G}}_S(c)(u,w)= 
         \left( \gamma_1 * \cdots * \gamma_{r+1} ,
         \gamma_1 *\gamma_2 * \cdots * \gamma_{r+1} \left(   \sum\limits_{j=1}^{r+1} l_{n_j}(w_j) - \sum\limits_{0\leq i<j\leq r+1} u_i l_{n_j}(w_j)     \right)
         \right)
     \]
    where $\gamma_j=\Theta(c^j)(w_j)$, $j=1,...,r+1$.\\
    On the other hand,
    \[
    \Upsilon_S(\widetilde{\mathcal{F}}(c)) (u,w)=
             \widetilde{\mathcal{F} } (c )\left( \frac{1}{\sum\limits_{j=1}^{r+1} l_{n_j}(w_j)} \left( \sum\limits_{j=1}^{r+1} l_{n_j}(w_j)-   \sum\limits_{0\leq i<j\leq r+1} u_i l_{n_j}(w_j)      \right) , w^S  \right)
    \]
     where $w^S$ is as defined in \ref{def:w_S}. By the correspondence described combinatorially in \ref{faces_and_breaks}, whence an application of an Alexander-Whitney coproduct on the $i$-th vertex corresponds to projecting the $i+1$-th coordinate of the freehedron to zero, we get that 
     \[\Theta(c)(w^S) = \gamma_1 * ... * \gamma_{r+1}.\] 
     Therefore
     \begin{align*}
         \Upsilon_S(\widetilde{\mathcal{F}}(c)) (u,w) &=
              \left(   \gamma_1 * ... * \gamma_{r+1} , \gamma_1 * ... * \gamma_{r+1} \left(  \sum\limits_{j=1}^{r+1} l_{n_j}(w_j) -   \sum\limits_{0\leq i<j\leq r+1} u_i l_{n_j}(w_j)      \right)    \right)\\
            & = \widetilde{\mathcal{G}}_S(c)(u,w).
     \end{align*}
     This shows that the diagram commutes. From Proposition \ref{chaincontraction}, it follows that $\alpha$ is a quasi-isomorphism. A standard acyclic models argument, shows that  $\Upsilon$ is a quasi-isomorphism. 
     
     We now argue that $\mathcal{G}$ is a quasi-isomorphism. This forces $\mathcal{F}$ to be a quasi-isomorphism as well, which will conclude the proof. This follows by considering the following diagram of natural maps 
\[\begin{tikzcd}
	{\mathcal{B}(\mathcal{A}(X),\mathcal{A}(X),\mathcal{A}(X))} & {\mathcal{P}(X) \square C_\bullet^{\Delta \times \square}(P^*X) \square \mathcal{P}(X)} & {\mathcal{P}(X) \square C_\bullet^{\square}(P^*X) \square \mathcal{P}(X)} \\
	{\mathcal{A}(X)} && {\mathcal{P}(X)}
	\arrow["{\mathcal{G}}", from=1-1, to=1-2]
	\arrow["\simeq"', from=1-1, to=2-1]
	\arrow["\Theta"', from=2-1, to=2-3]
	\arrow["\iota"', hook', from=1-3, to=1-2]
	\arrow["\simeq", from=1-3, to=2-3].
\end{tikzcd}\]
     The left vertical map is the composition \[\mathcal{B}(\mathcal{A}(X),\mathcal{A}(X),\mathcal{A}(X)) \twoheadrightarrow \mathcal{A}(X) \underset{\mathcal{C}_0(X)}{\square} \mathcal{A}(X) \to \mathcal{A}(X),\] of the natural projection map and the product of the monoid structure. This composition of maps is a quasi-isomorphism since the bar construction is a resolution of $\mathcal{A}(X)$ as an $\mathcal{A}(X)$-bimodule. In fact, a contracting homotopy can be constructed by using the unit map $\mathcal{C}_0(X) \to \mathcal{A}(X)$, as usual. The right vertical map is the map on cubical chains induced by forgetting the marked point of a marked path, which is clearly a homotopy equivalence at the space level, and then concatenating paths. The inclusion map $\iota$ is clearly a quasi-isomorphism by acyclic models and $\Theta$ is a quasi-isomorphism by \ref{adamsthm}. This implies $\mathcal{G}$ is a quasi-isomorphism. 
     \end{proof}


\subsection{Models for the free loop space}
Let us define a chain map \[g^F \colon \big(\mathcal{P}(X) \square C^{F \times \square}_\bullet(P^*X)\square \mathcal{P}(X) \big) \otimes_{\mathcal{P}(X)^e}  \mathcal{P}(X)  \to C^{F \times \square}_\bullet(\mathcal{L}X).\] The chain complex $\big(\mathcal{P}(X) \square C^{F \times \square}_\bullet(P^*X)\square \mathcal{P}(X) \big)$ is generated by pairs $\sigma \otimes \kappa$ where $\sigma \colon F_n \times I^m 
\to P^*X(x,y)$ and $\kappa \colon I^k \to \mathbf{P}X(y,x)$. On any such generator, define \[g^F(\sigma \otimes \kappa) \colon F_n \times I^{m} \times I^k \to \mathcal{L}X\] on any $(x,v,w) \in F_n \times I^m \times I^k$ by declaring $g^F(\sigma \otimes \kappa)(x,v,w)$ to be the loop that starts at the marked point of $\sigma(x,v)$ and runs the path  $\sigma(x,v)$ until it reaches $y \in X$, then continues with the path $\kappa(w)$ from $y$ to $x$ and ends by running $\sigma(x,v)$ again from $x$ to its marked point. Similarly, we may define a map \[g^{\Delta} \colon \big(\mathcal{P}(X) \square C^{\Delta \times \square}_\bullet(P^*X)\square \mathcal{P}(X) \big)\otimes_{\mathcal{P}(X)^e}  \mathcal{P}(X) \to C^{\Delta \times \square}_\bullet(\mathcal{L}X).\]

Denote by \[\theta^F\colon \big( \mathcal{A}(X) \square C^{F \times \square}_\bullet(P^*X)  \square \mathcal{A}(X) \big) \otimes_{\mathcal{A}(X)^e} \mathcal{A}(X) \to C^{F \times \square}_\bullet(\mathcal{L}X)\] 
and
\[\theta^\Delta \colon \big( \mathcal{A}(X) \square C^{\Delta \times \square}_\bullet(P^*X)  \square \mathcal{A}(X) \big) \otimes_{\mathcal{A}(X)^e} \mathcal{A}(X) \to C^{\Delta \times \square}_\bullet(\mathcal{L}X),\]
the maps obtained by pre-composing $g^F$ and $g^\Delta$, respectively, with Adams' map $\Theta \colon \mathcal{A}(X) \to \mathcal{P}(X)$.

\begin{theorem} \label{freeloopsmodel}
    For any topological space $X$, there is a commutative diagram of natural quasi-isomorphisms of chain complexes

\[\begin{tikzcd} \label{hochdiag}
	{\textnormal{coHoch}_\bullet(\mathcal{C}(X))} && {\big( \mathcal{A}(X) \square C^{F \times \square}_\bullet(P^*X)  \square \mathcal{A}(X) \big) \otimes_{\mathcal{A}(X)^e} \mathcal{A}(X)} & {C_\bullet^{F\times \square}(\mathcal{L}X)} \\
	{\textnormal{Hoch}_\bullet(\mathbf{\Omega}(\mathcal{C}(X)))} && {\big( \mathcal{A}(X) \square C^{\Delta \times \square}_\bullet(P^*X)  \square \mathcal{A}(X) \big) \otimes_{\mathcal{A}(X)^e} \mathcal{A}(X)} & {C_\bullet^{\Delta \times \square}(\mathcal{L}X)}
	\arrow["\widetilde{\mathcal{F}} \underset{\mathcal{A}(X)^e}{\otimes}{id_{\mathcal{A}(X)}}"', from=1-1, to=1-3]
	\arrow["{\overline{\alpha}}"', from=1-1, to=2-1]
	\arrow["{\widetilde{\mathcal{G}} \underset{\mathcal{A}(X)^e}{\otimes}{id_{\mathcal{A}(X)}}}"', from=2-1, to=2-3]
	\arrow["{\theta^{\Delta}}", from=2-3, to=2-4]
	\arrow["{\Upsilon }", from=1-4, to=2-4]
	\arrow["{\Upsilon \underset{\mathcal{A}(X)^e}{\otimes}{id_{\mathcal{A}(X)}}}", from=1-3, to=2-3]
	\arrow["{\theta^F}", from=1-3, to=1-4]
\end{tikzcd}\]
In particular, we have a natural quasi-isomorphism
\begin{eqnarray}\label{coHochmodel}
    \textnormal{coHoch}_\bullet(\mathcal{C}(X)) \xrightarrow{\Upsilon \circ \theta^F \circ  (\widetilde{\mathcal{F}} \underset{\mathcal{A}(X)^e}{\otimes}{id_{\mathcal{A}(X)}}) } C_\bullet^{\Delta \times \square}(\fancy{L}X) \xrightarrow{\mathcal{E}} C_\bullet(\fancy{L}X),
    \end{eqnarray}
where $C_\bullet(\mathcal{L}X)$ denotes the normalized (simplicial) singular chains on $\mathcal{L}X$ and $\mathcal{E}$ is the quasi-isomorphism induced by the Eilenberg-Zilber subdivision operator.
\end{theorem}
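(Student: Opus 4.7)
The plan is to verify the two squares of the diagram separately and then assemble the claimed zig-zag. The left square is a formal consequence of Theorem \ref{thmresolutions}, while the right square is a pointwise check using the explicit definitions of $\theta^F$, $\theta^\Delta$, and $\Upsilon$. The quasi-isomorphism property of the vertical arrows is already available from earlier results; the substantive new input is the quasi-isomorphism of the horizontal maps $\theta^F$ and $\theta^\Delta$, which realize the passage from paths to loops.

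\textbf{The left square.} I would begin by applying the functor $(-) \otimes_{\mathcal{A}(X)^e} \mathcal{A}(X)$ to the commuting square of dg $\mathcal{A}(X)$-bimodule maps from Theorem \ref{thmresolutions}. By the definitions of $\textnormal{Hoch}_\bullet$ and $\textnormal{coHoch}_\bullet$, this recovers the left square of our diagram. Each of the four arrows in Theorem \ref{thmresolutions} is known to be a quasi-isomorphism, and the bimodules appearing there are sufficiently cofibrant (the bar construction is the standard semi-free resolution; $\mathcal{Q}$ is a smaller free resolution, see \ref{comparingresolutions}; the path-space bimodules become free over $\mathcal{A}(X)$ after twisting by Adams' map $\Theta$), so derived and underived tensor products agree and the induced arrows remain quasi-isomorphisms. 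The vertical map $\overline{\alpha}$ is already a quasi-isomorphism by Corollary \ref{hoch-cohoch}, and $\Upsilon \otimes \mathrm{id}$ is one because $\Upsilon$ is, by Theorem \ref{thmresolutions}.

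\textbf{The right square.} Here I would unfold the definitions on a generator $a \square \sigma \square b$ of the top middle complex with $\sigma \colon F_n \times I^m \to P^*X$. Going right then down, $\Upsilon \circ \theta^F$ first closes $\sigma$ into a family of loops in $\mathcal{L}X$ and then subdivides the freehedron factor into prisms via the maps $\upsilon_S$. Going down then right, $\theta^\Delta \circ (\Upsilon \otimes \mathrm{id})$ first subdivides $F_n$ into prisms and then closes up each prism into a family of loops. Both recipes yield the same family of maps $\Delta^{|S|+1} \times I^{\cdots} \to \mathcal{L}X$ because $\upsilon_S$ only reparameterizes the $F_n$-coordinate and preserves the marked-point datum along which the loop is closed; this is precisely the compatibility already exploited in the proof of Theorem \ref{thmresolutions}.

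\textbf{Quasi-isomorphism of $\theta^F$ and $\theta^\Delta$; conclusion.} The main obstacle is showing that $\theta^F$ and $\theta^\Delta$ are quasi-isomorphisms. I would factor each as the change-of-bimodule map induced by Adams' quasi-equivalence $\Theta$ (Theorem \ref{adamsthm}), followed by the geometric loop-closing map $g^F$ or $g^\Delta$ over $\mathcal{P}(X)$. The first factor is a quasi-isomorphism because $\Theta$ is a quasi-equivalence and the bar-type bimodules are cofibrant, so the derived tensor product is insensitive to the change. For the second factor, the key geometric observation is the homotopy pullback
$$\mathcal{L}X \simeq P^*X \times_{X \times X}^{h} PX,$$
where the right-hand side glues a marked path to a return path along its two endpoints. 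The tensor product $(-) \otimes_{\mathcal{P}(X)^e} \mathcal{P}(X)$ applied to the bar-type bimodule of marked paths models this homotopy pullback at the chain level, and $g^F$, $g^\Delta$ are the explicit comparison maps; both can be shown to be quasi-isomorphisms by an acyclic-models argument carried out fiberwise over the map $P^*X \to X \times X$, following the approach of \cite{Goodwillie}. Once every arrow is a quasi-isomorphism, composing along the top row produces the first quasi-isomorphism in \ref{coHochmodel}. The second is the Eilenberg--Zilber subdivision $\mathcal{E}$, which is classically a quasi-isomorphism between the simplex-cube model $C^{\Delta\times\square}_\bullet(\mathcal{L}X)$ and the ordinary normalized singular chains $C_\bullet(\mathcal{L}X)$, verified by acyclic models.
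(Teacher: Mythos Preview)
Your proposal is correct and follows essentially the same approach as the paper's proof: obtain the left square by applying $(-)\otimes_{\mathcal{A}(X)^e}\mathcal{A}(X)$ to the bimodule square of Theorem~\ref{thmresolutions}, verify the right square directly, and then show $\theta^{\Delta}$ (and hence $\theta^F$) is a quasi-isomorphism by factoring through Adams' map and a geometric loop-closing map known from \cite{Goodwillie}. One small imprecision: the right column of Theorem~\ref{thmresolutions} consists of $\mathcal{P}(X)$-bimodules, and restricting along $\Theta$ does not make them \emph{free} $\mathcal{A}(X)$-bimodules; the paper instead passes to the analogous square with $\mathcal{A}(X)\,\square\,C^{\bullet}(P^*X)\,\square\,\mathcal{A}(X)$ (genuinely free $\mathcal{A}(X)$-bimodules) and notes that the same argument as in Theorem~\ref{thmresolutions} applies there, after which tensoring with $\mathcal{A}(X)$ preserves the quasi-isomorphisms for the straightforward reason that all modules involved are free.
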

\begin{proof}
Consider the diagram
\[\begin{tikzcd}[column sep=10em]
 {\mathcal{Q}(\mathcal{A}(X), \mathcal{C}(X),\mathcal{A}(X)) } & {\mathcal{A}(X) \square C^{F \times \square}_\bullet(P^*X) \square \mathcal{A}(X)} \\
	{\mathcal{B}(\mathcal{A}(X),\mathcal{A}(X),\mathcal{A}(X))} & {\mathcal{A}(X) \square C_\bullet^{\Delta \times \square}(P^*X) \square \mathcal{A}(X) } 
	\arrow["\alpha"', from=1-1, to=2-1]
	\arrow["\textnormal{id}_{\mathcal{A}(X)} \square \widetilde{\mathcal{F}} \square \textnormal{id}_{\mathcal{A}(X)}", from=1-1, to=1-2]
	\arrow[" \textnormal{id}_{\mathcal{A}(X)} \square \widetilde{\mathcal{G}} \square \textnormal{id}_{\mathcal{A}(X)}"', from=2-1, to=2-2]
	\arrow["\Upsilon", from=1-2, to=2-2]
\end{tikzcd}\]
A similar argument to the proof of Theorem \ref{thmresolutions} shows this is a commutative square of quasi-isomorphisms of dg (free) $\mathcal{A}(X)$-bimodules. Applying $(-) \otimes_{ \mathcal{A}(X)^e } \mathcal{A}(X)$ to this square gives the desired (left) square in the theorem.

The  square on the right also commutes by the naturality of the constructions. The fact that the chain map $\mathcal{E}$ (which subdivides the cube factor into simplices) is a quasi-isomorphism follows from a standard acyclic models argument. The map
\[  \textnormal{Hoch}_\bullet(\mathbf{\Omega}(\mathcal{C}(X))) \xrightarrow{\mathcal{E}\circ \theta^\Delta \circ  (\widetilde{\mathcal{G}}\underset{\mathcal{A}(X)^e}{\otimes}{id_{\mathcal{A}(X)}}) } C_\bullet^{\Delta \times \square}(\fancy{L}X) \xrightarrow{\mathcal{E}} C_\bullet(\fancy{L}X)\]
is induced by (a ``many-object" version of) the map denoted by $\lambda$ in \cite[Section V.1]{Goodwillie}, which is shown to be a quasi-isomorphism in that reference. This implies $\theta^{\Delta}$ is a quasi-isomorphism and consequently $\theta^F$ as well. Alternatively, one can also show $g^{\Delta}$ is a quasi-isomorphism directly, by observing that $g^{\Delta}$ is induced by a space level weak homotopy equivalence; which gives another proof of Goodwillie's theorem about $\lambda$ inducing a quasi-isomorphism. 
\end{proof}
We denote the composition of maps in the top row of the above diagram by
\begin{eqnarray} \label{Fmaphoch}
\mathfrak{F} \colon \text{coHoch}_\bullet(\mathcal{C}(X)) \to C_\bullet^{F \times \square}(\mathcal{L}X)
\end{eqnarray}
and the composition of maps in the row below by
\begin{eqnarray} \label{Gmaphoch}
\mathfrak{G} \colon \textnormal{Hoch}_\bullet(\mathbf{\Omega}(\mathcal{C}(X))) \to C_\bullet^{\Delta \times \square}(\mathcal{L}X).
\end{eqnarray}

\section{Cyclic homology and equivariant homology} \label{cyclichomology}
\subsection{Mixed complexes and cyclic objects} \label{mixedcomplexes}
Mixed complexes, first introduced by Kassel \cite{KASSEL1987195}, provide a suitable framework to study cyclic homology. By a \textit{mixed complex} $(M,b,B)$ we will mean a non-negatively graded $R$-module $M$, together with a degree $-1$ linear map $b$ and a degree$+1$ linear map $B$ such that 
 \begin{align}
     &b^2 = 0,\\ 
     &B^2 = 0 \text{, and}\\ 
     &bB + Bb = 0.
 \end{align}
A \textit{morphism of mixed complexes} $f \colon (M,b,B) \rightarrow (M',b',B')$ is an $R$-linear map that is compatible with the differentials $b,b',B,B'$. It is called a \textit{quasi-isomorphism} if it induces an isomorphism in homology $$H_\bullet(M, b) \to H_\bullet(M', b') .$$ 

Cyclic objects give rise to examples of mixed complexes. Recall that a \textit{cyclic object} in a category $\fancy{C}$ consists of a simplicial object $X_\bullet \colon \Delta^{\text{op}} \to \fancy{C}$ with the additional structure of an action of the cyclic group of order $n+1$ on $X_n$ for each $n\geq 0$ satisfying the following relations. Denoting by $t=t_n$  the generator of the corresponding cyclic group we require
\begin{align*}
    &d_i t = \begin{cases}
                        t d_{i-1} \quad 0<i\leq n\\
                        d_n \quad  i=0
                    \end{cases}
                    \\
    &s_{i}t = \begin{cases}
        t s_{i-1} \quad 0<i\leq n\\
        t^2 s_n \quad i=0
    \end{cases}
    \\
    &t^n = \text{id}
\end{align*}
Equivalently, one may describe a cyclic object in $\fancy{C}$ as a functor $\Lambda^{\text{op}} \rightarrow \fancy{C}$,
where $\Lambda$ is Connes' \textit{cyclic category}, see \cite{MR0777584} for a detailed description. 

The \textit{extra degeneracies} of a cyclic object $X$ are defined as the morphisms
\[
s= s_{n+1} = (-1)^{n+1} t_{n+1} s_n: X_n \rightarrow X_{n+1}.
\]
These satisfy
\begin{align*}
    &d_0s=\text{id},\\
    &d_is=sd_{i-1} \text{ for } 0 < i \leq n 
   \end{align*}
In general, $d_{n+1}s \neq sd_n$.

Any cyclic dg $R$-module (a cyclic object in the category $\mathsf{Ch}_R$) gives rise to a mixed complex upon passing to the normalized total complex. In particular, any cyclic topological space (i.e. cyclic object in the category $\mathsf{Top}$) gives rise to a mixed complex through the normalized singular chains functor. We discuss this particular case in more detail. If \[ [n] \mapsto X_n\] is a cyclic topological space, then the normalized singular chains functor (simplicial, cubical, or any version) applied to each space $X_n 
 $, yields a cyclic dg $R$-module \[[n] \mapsto (C_\bullet(X_n),\partial)\] with structure maps $C_\bullet(d_i),C_\bullet(s_j)$, and $C_\bullet(t_n)$ induced by the structure maps of $X_\bullet$. Taking the alternating sums of the face maps of the cyclic structure, we obtain an induced chain map on normalized chains 
\[
d = \sum\limits_{i=0}^n (-1)^i C_\bullet(d_i): C_\bullet (X_n) \rightarrow C_\bullet (X_{n-1})
\]
satisfying $d^2=0$. This may be regarded as a bi-complex that on bi-degree $i,j$ is given by $C_i(X_j)$. We denote its normalized total complex by $\mathbf{M}X$. By definition, the differential of $\mathbf{M}X$ is given by
\[
b = d + \tilde{\partial},
\]
where $\tilde{\partial}(\alpha) = (-1)^{k} \partial(\alpha)$ for any homogenous element $\alpha \in C_k(X_n).$ The dg $R$-module $\mathbf{M}X$ has an additional operator 
\[
B = s\circ (\sum_{i=0}^n ((-1)^nC_{\bullet}(t))^i): \mathbf{M}X_n \rightarrow \mathbf{M}X_{n+1}
\]
encoding the cyclic group actions, where $s$ is the extra degeneracy. A routine check yields that the total complex, together with the degree $-1$ map $b$, and the degree $+1$ map B define a mixed complex $(\mathbf{M}X,b,B)$, and this construction is functorial. We refer to \cite{loday2013cyclic} for further details. 

The above passage from the cyclic dg $R$-module $[n] \mapsto (C_\bullet(X_n),\partial)$ to the mixed complex $(\mathbf{M}X,b,B)$ can be defined in general yielding a normalized total complex functor from cyclic dg $R$-modules to mixed complexes. 

\subsection{Chains on the free loop space as a mixed complex} \label{chainsonLX}

Any topological space $Y$ equipped with an $S^1$-action $r \colon S^1 \times Y \to Y$ gives rise to a mixed complex. More precisely, let $\iota \colon \Delta^1 \to S^1$ be the singular $1$-cycle determined by the continuous map that identifies the two endpoints of $\Delta^1$ so that the homology class $[\iota]$ generates $H_1(S^1) \cong R\langle [\iota] \rangle$. Consider the map of degree $+1$ on normalized singular chains given by the composition
\[\mathcal{R}: C_\bullet(Y) \xrightarrow{\iota \otimes -} C_1(S^1)\otimes C_\bullet(Y) \xrightarrow{\mathcal{S}} C_{\bullet+1}(S^1 \times Y) \xrightarrow{C_{\bullet}(r)} C_{\bullet+1}(Y),\]
where $\mathcal{S}$ denotes an appropriate natural subdivision operator, which we describe below.  It follows that $\mathcal{R}^2=0$ and $\mathcal{R} \partial + \partial \mathcal{R} =0$, where $\partial$ is the boundary differential of the normalized singular chains complex. Thus $(C_\bullet(Y),\partial, \mathcal{R})$ is a mixed complex. For any singular $n$-simplex $\sigma \colon \Delta^n \to Y$, the subdivision operator is defined by
\begin{equation} \label{subdivision} \mathcal{S}( \iota \otimes \sigma ) = \sum_{i=0}^{n} (-1)^{ni} (\iota \times \sigma)|_{{\Delta}^{n+1}_i} \in C_{n+1}(S^1 \times Y),
\end{equation}
where \[\Delta^{n+1}_i = \{(u_0, \ldots, u_n, \phi) \in \Delta^n \times S^1 | \sum_{j=n-i+1}^n u_j \leq \phi \leq \sum_{j=n-i}^n u_j\}.\] Note we have used barycentric coordinates $(u_0, \ldots, u_n)$ on $\Delta^n$. The above formula determines a chain map $\mathcal{S} \colon C_\bullet(S^1)\otimes C_\bullet(Y) \to C_\bullet(S^1 \times Y)$.

In particular, the free loop space $Y= \fancy{L}X$ equipped with the $S^1$-action given by rotation of loops gives rise to a mixed complex $(C_\bullet(\fancy{L}X), \partial, \mathcal{R})$. The same construction as above gives rise to a mixed complex $(C_\bullet^{\Delta \times \square}(\fancy{L}X), \partial, \mathcal{R})$, where $C_\bullet^{\Delta \times \square}(\fancy{L}X)$ is the normalized chain complex generated by singular prisms $\Delta^n \times I^m \to \fancy{L}X$. Furthermore, the Eilenberg-Zilber map $\mathcal{E} \colon C_\bullet^{\Delta \times \square}(\fancy{L}X) \to C_\bullet(\mathcal{L}X)$ is a quasi-isomorphism of mixed complexes. 

For any fixed non-negative integer $n$ we shall denote by $C^{\Delta \times \square}_{n,\bullet}(\fancy{L}X)$ the dg $R$-module obtained by considering the graded $R$-module generated by continuous maps $\Delta^n \times I^m \to \fancy{L}X$ and then modding out by \textit{cubical} degeneracies. The differential \[\partial^{\square} \colon C^{\Delta \times \square}_{n,\bullet}(\fancy{L}X) \to C^{\Delta \times \square}_{n,\bullet-1}(\fancy{L}X)\] is then given by the cubical boundary map ignoring the fixed factor of $\Delta^n$.

The mixed complex $(C_\bullet^{\Delta \times \square}(\fancy{L}X), \partial, \mathcal{R})$ arises as the total complex of a cyclic dg $R$-module structure on the assignment
\[[n] \mapsto (C^{\Delta \times \square}_{n,\bullet}(\fancy{L}X), \partial^{\square}), \] as we now explain. For any
$\sigma: \Delta^n \times I^m \rightarrow \fancy{L}X$
and $\varepsilon \in S^1$, denote by $\sigma_\varepsilon \colon \Delta^n \times I^m \to \fancy{L}X$ the map given by
\[
\sigma_{\varepsilon}(u, v)(\theta) = \sigma(u, v)(\theta+ \epsilon) \]
on any $(u, v, \theta) \in \Delta^n \times I^m\times S^1$. In other words $\sigma_\varepsilon = \varepsilon \cdot \sigma,$ where $\cdot$ denotes the action of $S^1$ on $C_\bullet^{\Delta \times \square}(\fancy{L}X)$ induced by rotation of loops. Writing $(u_0,\dots,u_n, v)$ for the coordinates on $\Delta^n \times I^m$, the cyclic structure maps are given by
\begin{align*}
    &\partial_i(\sigma)(u_0,\dots, u_{n-1},v) = \sigma(u_0,\dots, u_{i-1} ,0,u_{i+1},\dots ,u_{n-1},v) \quad i=0,\dots, n\\
    & s_i(\sigma)(u_0, \dots ,u_{n+1},v) = \sigma(u_0, \dots, u_{i}+u_{i+1}, \dots, u_{n+1},v) \quad i=0,\dots, n\\
    & t(\sigma)(u_0,\dots , u_{n},v) = \sigma_{-u_0}(u_1,\dots, u_n,u_0,v).
\end{align*}
The above maps satisfy the cyclic compatibilies. The extra degeneracy is given explicitly by
\[
s(\sigma)(u_0,\dots , u_{n+1},v)=ts_n(\sigma)(u_0,\dots , u_{n+1},v)= \sigma_{-u_0}(u_1,\dots, u_n, u_{n+1}+ u_0,v).
\]
The \textit{normalized} total complex of the underlying simplicial dg $R$-module structure is exactly $C_\bullet^{\Delta \times \square} (\fancy{L}X)$. The induced Connes' operator \[Q \colon C_\bullet^{\Delta \times \square}(\fancy{L}X) \to C_{\bullet+1}^{\Delta \times \square}(\fancy{L}X)\] is defined by \[Q=s \circ ( \sum_{i=0}^n (-1)^n t)^i \colon C^{\Delta \times \square}_{n,\bullet}(\fancy{L}X) \to C^{\Delta \times \square}_{n+1,\bullet}(\fancy{L}X).\] Explicitly, on any $\sigma \colon \Delta^n \times I^m \to \fancy{L}X$, we have
\[Q(\sigma)(u_0,\ldots,u_{n+1},v)  =  \sum\limits_{i=0}^{n} (-1)^{ni} \sigma_{(-\sum_{j=0}^i u_j)}(u_{i+1},\dots, u_n,u_{n+1}+u_0,u_1\dots, u_i,v).\]
\begin{proposition}
The operators $\mathcal{R} \colon C_\bullet^{\Delta \times \square}(\fancy{L}X) \to C_{\bullet+1}^{\Delta \times \square}(\fancy{L}X) $ and $Q \colon C_\bullet^{\Delta \times \square}(\fancy{L}X) \to C_{\bullet+1}^{\Delta \times \square}(\fancy{L}X)$ agree. 
\end{proposition}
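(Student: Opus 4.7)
The plan is to verify the equality by a direct calculation at the level of singular prisms, matching the summands on the two sides term by term. First I would unravel $\mathcal{R}(\sigma)$ for a generic $\sigma \colon \Delta^n \times I^m \to \fancy{L}X$. By definition, we form $\iota \otimes \sigma$, apply the subdivision $\mathcal{S}$ to obtain the signed sum $\sum_{i=0}^{n}(-1)^{ni}(\iota \times \sigma)|_{\Delta^{n+1}_i}$ of $(n+1)$-prisms in $S^1 \times \fancy{L}X$, and push forward via $r$. The cube factor $I^m$ is inert throughout, so the computation reduces to understanding the simplicial factor.

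The key step is to fix an explicit barycentric parameterization $\beta_i \colon \Delta^{n+1} \to \Delta^{n+1}_i \subset \Delta^n \times S^1$. I would determine the vertices of $\Delta^{n+1}_i$ from the defining inequalities: they are $(e_0,0),\ldots,(e_{n-i},0),(e_{n-i},1),(e_{n-i+1},1),\ldots,(e_n,1)$ (with $\phi$ taken in $[0,1]$). Ordering the vertices in the standard way and extending linearly yields an explicit formula $\beta_i(w) = (u_i(w), \phi_i(w))$, so that the $i$-th summand of $\mathcal{R}(\sigma)$ is the prism $(w,v,\theta) \mapsto \sigma(u_i(w), v)(\theta + \phi_i(w))$, since $r(\phi, \gamma)(\theta) = \gamma(\theta+\phi)$.

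Third, I would compare this with the explicit formula for $Q_i(\sigma) = s \circ t^i(\sigma)$ stated just before the proposition, namely $\sigma_{-\sum_{j=0}^{i} w_j}(w_{i+1},\ldots,w_n, w_{n+1}+w_0, w_1,\ldots,w_i, v)$. The rotation amounts are compatible via the identity $-\sum_{j=0}^{i} w_j \equiv \phi_i(w) \pmod{1}$ in $S^1$, and the $u$-arguments agree after the cyclic reparameterization of $\Delta^{n+1}$ implicit in the definitions of the shuffle subdivision and of the extra degeneracy $s$. The sign $(-1)^{ni}$ appearing on both sides accounts exactly for this reparameterization.

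The main obstacle is bookkeeping the two different coordinate systems on $\Delta^{n+1}_i$: the shuffle subdivision orders the $n+2$ corners of $\Delta^{n+1}_i$ by the lexicographic order of the staircase in $\Delta^n \times [0,1]$, whereas Connes' operator parameterizes it via the iterated cyclic rotation $t^i$ followed by the extra degeneracy $s$, which combines a cyclic permutation of barycentric coordinates with a compensating loop rotation. Verifying that the change of coordinates between these two parameterizations contributes exactly the sign $(-1)^{ni}$ (and that the rotation amounts match in $S^1$) is the only nontrivial content; once checked summand by summand, the equality $\mathcal{R}=Q$ follows from the equality of each corresponding pair of singular prisms in $\fancy{L}X$.
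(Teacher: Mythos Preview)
Your proposal is correct and follows essentially the same approach as the paper: a direct term-by-term comparison showing that the $i$-th summand of $\mathcal{R}(\sigma)$ coincides with $s\circ t^i(\sigma)$ as singular prisms in $\fancy{L}X$. The only difference is presentational: you propose to write down an explicit affine parameterization $\beta_i\colon \Delta^{n+1}\to\Delta^{n+1}_i$ and compare formulas globally, whereas the paper fixes a point $(k_0,\ldots,k_n,v)\in\Delta^n\times I^m$ and traces the line segment in $\Delta^{n+1}$ along which $s t^i(\sigma)$ sweeps out the relevant arc of the $S^1$-orbit of $\gamma=\sigma(k_0,\ldots,k_n,v)$, identifying that arc with the one covered by $(\iota\times\sigma)|_{\Delta^{n+1}_i}$. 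Both arguments reduce to the same bookkeeping you identify as the main obstacle.
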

\begin{proof}
    Let $\sigma:\Delta^n \times I^m \rightarrow \fancy{L}X$, fix a point $(k_0,\ldots, k_n, v) \in \Delta^n \times I$, and denote
    $\sigma(k_0,\dots,k_n,v)$ by $ \gamma$.
    Since \[s(\sigma)(u_0,\dots, u_{n+1},v) = \sigma_{-u_0}(u_1,\dots, u_{n+1}+u_0,v),\] we have 
    $s(\sigma)(0,k_0, \ldots, k_n,v)= \gamma$. Furthermore, as long as 
    \begin{equation}\label{line}
    u_j=k_{j-1} \text{ for } j= 1,\dots , n \text{ , and }
    u_{n+1}+u_0 = k_n,
    \end{equation}
    holds, $s(\sigma)(u_0,\dots,u_{n+1},v)$ is in the $S^1$-orbit of $\gamma$. This means we may vary $u_0$ while staying in the $S^1$-orbit of $\gamma$. More precisely, if $u_{n+1}=k_n-u_0$, then 
    $ 0 \leq u_0 \leq k_{n}$; so that 
    the line segment determined by \ref{line} parameterizes $-\varepsilon \cdot \gamma = \sigma_{-\varepsilon}(k_0,\dots, k_n,v)$ for $0 \leq \varepsilon \leq k_n$. This part of the $S^1$-orbit of $\gamma$ is then determined by the term $s(\sigma)=st^0(\sigma)$ in the sum defining $Q(\sigma)$.
    
In general, $st^i(\sigma)$ restricted to the line segment
    \begin{align*}
        &u_j=k_{j-(i+1)} \text{ for } i<j\leq n\\
        &u_{n+1}+u_0 = k_{n-i}\\
        &u_j=k_{n-i+1} \text{ for } 0<j\leq i
    \end{align*}
    parameterizes $(-\varepsilon)\cdot \gamma = \sigma_{-\varepsilon}(k_0,\dots,k_n,v)$ for $\sum_{j=n-i+1}^{n}k_j \leq \varepsilon \leq \sum_{j=n-i}^{n}k_j$.  This part of the $S^1$-orbit of $\gamma$ is then determined by the term $st^i(\sigma)$ in the sum defining $Q(\sigma)$. From this description it follows that  \[st^i(\sigma)=(-1)^{ni} C_\bullet^{\Delta \times \square}(r) \Big((\iota \times \sigma)|_{\Delta^{n+1}_i}\Big),
    \] the $i$-th term in the sum defining $\mathcal{R}(\sigma)$.\end{proof}

\subsection{The (co)Hochschild complex as a mixed complex} \label{Hochasmixedcomplex}

The Hochschild complex  $(\text{Hoch}_{\bullet}(A),\delta)$ of a dg category $A$ may be endowed with a natural mixed complex structure as follows. Recall that \textit{Connes' operator} on the (normalized) Hochschild complex is the map
 $$B: \text{Hoch}_\bullet(A) \rightarrow \text{Hoch}_{\bullet + 1}(A)$$
defined by
$$B( [a_1 \vert \cdots \vert a_p]a_{p+1} ) = \sum\limits_{i=1}^{p+1} \pm [a_i \vert \cdots \vert a_{p+1} \vert a_1 \vert \cdots \vert a_{i-1}] \id_{s(a_1)}. $$
This map satisfies $B^2 = 0$ and $\delta B + B \delta = 0$. Consequently, $(\text{Hoch}_{\bullet}(A), \delta , B)$ is a mixed complex naturally associated to $A$. This mixed complex arises as the total complex of a cyclic dg $R$-module structure on the assignment

\[ [n] \mapsto \mathcal{Z}_n(A):= \big(\mathcal{M}(A) \underset{A_0}{\square} \overset{n}{\cdots} \underset{A_0}{\square} \mathcal{M}(A) \big) \underset{A_0 \otimes A_0^{\text{op}}}{\square} \mathcal{M}(A),\] 
where $A_0= R[\text{Obj}(A)]$ is the coalgebra generated by the objects of $A$ with diagonal coproduct and $\underset{K}{\square}$ denotes the cotensor product over a coalgebra $K$. In other words, $\mathcal{Z}_n$ is generated by composable strings $(a_1,\ldots,a_{n},a_{n+1})$ of morphisms in $A$ in which the target of $a_{n+1}$ is the source of $a_1$. The faces, degeneracies, and cyclic operators are defined as:
    \begin{align*}
    &d_i(a_1,\ldots,a_{n},a_{n+1}) = (a_1,\ldots, a_ia_{i+1}, \ldots, a_{n},a_{n+1}) \quad \text{for } i=1,\dots ,n \\
&d_{0}(a_1,\ldots,a_{n},a_{n+1}) = (a_2, \ldots, a_{n}, a_{n+1}a_1) \\   
&s_i(a_1,\ldots,a_{n},a_{n+1}) = (a_1, \ldots, a_i, \text{id}_{\mathsf{t}(a_{i})}, a_{i+1}, \ldots, a_{n+1}), \quad \text{for } i=1,\dots ,n
\\
&s_0(a_1,\ldots,a_{n},a_{n+1}) = (a_1, \ldots, a_{n+1},\text{id}_{\mathsf{t}(a_{n+1})})\\
&t(a_1,\ldots,a_{n},a_{n+1})= (a_{n+1},a_1 \ldots, a_n).
\end{align*}
Above we have written composition as concatenation, i.e. $ab= (b \circ a)$.


Dually, the coHochschild complex $(\text{coHoch}_{\bullet}(C),d)$ of a categorical coalgebra $C$ may also be endowed with a natural mixed complex structure as follows. Define a linear map of degree $+1$ \[P: \text{coHoch}_{\bullet}(C) \rightarrow \text{coHoch}_{\bullet +1}(C)\] by \[P(c_0 \{ c_1 \vert \cdots \vert c_p \})= \sum_{i=1}^p \pm \varepsilon(c_0) c_i \{ c_{i+1} \vert \cdots \vert c_p \vert c_1 \vert \cdots \vert c_{i-1} \},\] where $\varepsilon \colon C \to R$ is the counit of $C$. The signs are determined by the Koszul sign rule. This map satisfies $P^2 = 0$ and $d P + P d = 0$. Consequently, $(\text{coHoch}_{\bullet}(C), d , P)$ is a mixed complex naturally associated to $C$. This mixed complex arises by taking the \textit{direct sum} totalization of a co-cyclic dg $R$-module, but this perspective is not necessary for the present article.

\begin{proposition} \label{pimapofmixedcomplexes} For any categorical coalgebra $C$, the map $\overline{\pi} \colon \textnormal{Hoch}_\bullet( \mathbf{\Omega}C) \to \textnormal{coHoch}_{\bullet}(C)$ defined in \ref{overlinepi} is a natural quasi-isomorphism of mixed complexes.
\end{proposition}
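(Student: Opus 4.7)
By Corollary \ref{hoch-cohoch} the map $\overline{\pi}$ is already a natural chain map and quasi-isomorphism between $(\textnormal{Hoch}_\bullet(\mathbf{\Omega}C),\delta)$ and $(\textnormal{coHoch}_\bullet(C), d)$, so the only new content in the proposition is the compatibility with the degree $+1$ operators, i.e.\ the identity
\[P \circ \overline{\pi} \,=\, \overline{\pi} \circ B.\]
The plan is to verify this identity directly on generators of $\textnormal{Hoch}_\bullet(\mathbf{\Omega}C)$, exploiting two structural features: $\overline{\pi}$ vanishes on generators of bar length $\geq 2$, while $B$ strictly increases bar length by one. This immediately reduces the check to a small number of explicit cases.

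The verification splits into two cases. For a generator $[a_1|\cdots|a_p]a_{p+1}$ with $p\geq 1$: the expression $\overline{\pi}(B(\cdots))$ vanishes because $B$ produces terms of bar length $p+1\geq 2$, all killed by $\overline{\pi}$; the other side $P(\overline{\pi}(\cdots))$ also vanishes, trivially for $p\geq 2$ (the input is already killed), and for $p=1$ because $\overline{\pi}([a_1]a_2)$ is a sum of elements of the form $c_j\{\cdots\}$ with $c_j\in\overline{C}$, on which $P$ vanishes since $\varepsilon|_{\overline{C}}=0$. For a bar length $0$ generator, represented by $a = \{c_1|\cdots|c_q\}\in \mathcal{M}(\mathbf{\Omega}C)(x,x)$, one has $B(a) = \pm [a]\,\text{id}_x$ of bar length $1$, and the formula for $\pi$ gives
\[\overline{\pi}(B(a)) \,=\, \sum_{i=1}^{q} \pm\, c_i\{c_{i+1}|\cdots|c_q|c_1|\cdots|c_{i-1}\},\]
the identity entry disappearing from the monomial. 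On the other hand, extending the $q=0$ formula $\pi(a_0[\text{id}_x]a_2) = a_0\,\square\, x\,\square\, a_2$ via the unit $C_0\hookrightarrow C$ yields $\overline{\pi}(a) = x\{c_1|\cdots|c_q\}$, and since $x\in C_0$ is grouplike one has $\varepsilon(x)=1$, so
\[P(\overline{\pi}(a)) \,=\, \sum_{i=1}^{q} \pm\, c_i\{c_{i+1}|\cdots|c_q|c_1|\cdots|c_{i-1}\},\]
which matches the previous sum term by term.

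The main obstacle I foresee is sign bookkeeping: one must verify that the Koszul signs produced by the cyclic rotation defining $B$ (via the extra degeneracy composed with the norm on the underlying cyclic module of $\textnormal{Hoch}_\bullet(\mathbf{\Omega}C)$) agree precisely with those produced by $P$ on the coHochschild side. A careful but mechanical induction on $q$ settles this. A conceptually cleaner alternative would be to promote $\overline{\pi}$ to a morphism of underlying (co)cyclic dg $R$-modules, from which compatibility with both the simplicial differential and the cyclic operator, and hence with $B$ and $P$, would follow formally from the total-complex functor; however, this route requires first spelling out the co-cyclic structure on $\textnormal{coHoch}_\bullet(C)$ that is only alluded to in the paragraph preceding the proposition, so for an efficient proof the direct case analysis above seems preferable.
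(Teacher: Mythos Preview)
Your proposal is correct and follows essentially the same approach as the paper: a direct case analysis on the bar length of a Hochschild generator, observing that both $P\overline{\pi}$ and $\overline{\pi}B$ vanish on generators of bar length $\geq 1$ (using $\varepsilon|_{\overline{C}}=0$ in the $p=1$ case), and then matching the two sides explicitly in the only non-trivial case of bar length $0$. Your case split ($p\geq 1$ versus $p=0$) is marginally cleaner than the paper's, which writes the bar-length-$0$ case as $[\mathrm{id}_x]\{k_1|\cdots|k_r\}$ (i.e.\ $p=1$, $q=0$, via the normalization identification), but the content is identical.
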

\begin{proof}
    Let $a=[a_1 | \dots | a_p]a_{p+1}$ be a generator of $\textnormal{Hoch}_\bullet( \mathbf{\Omega}C)$. If $p>1$ then $P \overline{\pi} (a) = \overline{\pi} B(a) = 0 $. If $p=1$, then $a$ is of the form 
    \[
    a = [a_1]a_2 = [\{c_1| \dots | c_q\}]a_2 .
    \]
    If $q>0$, then 
    \[
    \overline{\pi} B(a)=\overline{\pi} ( [a_1 | a_2]\id_{s(a_1)} - [a_2 | a_1]\id_{s(a_2)} ) =0
    \] 
    On the other hand
    \[
    \overline{\pi} (a) = \sum\limits_{i=1}^p \pm c_i \{c_{i+1}| \dots | c_q \} a_2 \{ c_1 | \dots | c_{i-1}\} 
    \]
    where each $c_i$ has degree $>0$, so in particular $\varepsilon(c_i)=0$ for all $i$, and therefore $B\overline{\pi}(a)=0$. The only non-trivial case arises when $p=1$ and $q=0$, i.e. when $a$ is of the form
    \[
    a = [\id_x]\{k_1 | \dots | k_r\}
    \]
    for some $x \in C_0$. Then
    \begin{align*}
          P \overline{\pi} ([\id_x]\{k_1 | \dots | k_r\}) &=  P(x\{k_1 | \dots | k_r\})\\
        &=  \sum\limits_{i=1}^r \pm \varepsilon(x) k_i \{k_{i+1} | \dots | k_r | k_1 |\dots |k_{i-1} \} \\ 
        & = \overline{\pi} ([\id_x \{ k_1 | \dots | k_r \}]\id_{x})\\
        & = \overline{\pi} B ([\id_x]\{k_1 | \dots | k_r\} ).
    \end{align*}
\end{proof}
\begin{corollary}\label{invariance}
Any $\mathbf{\Omega}$-quasi-equivalence  $f \colon C \to C'$ of categorical coalgebras induces a quasi-isomorphism of mixed complexes $\textnormal{coHoch}_\bullet(f) \colon  \textnormal{coHoch}_\bullet(C) \to \textnormal{coHoch}_\bullet(C').$
\end{corollary}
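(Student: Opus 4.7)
The plan is to deduce the corollary from Proposition \ref{pimapofmixedcomplexes} together with the Morita-style invariance of Hochschild homology under quasi-equivalences of (sufficiently cofibrant) dg categories. More precisely, the natural chain contraction $(\overline{\pi},\overline{\alpha},\overline{H})$ of Corollary \ref{hoch-cohoch} is strictly natural in $C$, so we obtain a commutative square
\[
\begin{tikzcd}
\textnormal{Hoch}_\bullet(\mathbf{\Omega}C) \arrow[r, "\textnormal{Hoch}_\bullet(\mathbf{\Omega}f)"] \arrow[d, "\overline{\pi}"'] & \textnormal{Hoch}_\bullet(\mathbf{\Omega}C') \arrow[d, "\overline{\pi}"] \\
\textnormal{coHoch}_\bullet(C) \arrow[r, "\textnormal{coHoch}_\bullet(f)"'] & \textnormal{coHoch}_\bullet(C').
\end{tikzcd}
\]
By Proposition \ref{pimapofmixedcomplexes}, both vertical arrows are quasi-isomorphisms of mixed complexes. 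Hence it suffices to show that the top horizontal arrow is a quasi-isomorphism of mixed complexes.

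The first step is to observe that $\textnormal{Hoch}_\bullet(\mathbf{\Omega}f)$ is automatically a morphism of mixed complexes: Connes' operator $B$ is defined in terms of the cyclic structure maps $(d_i,s_j,t)$ on $\mathcal{Z}_\bullet(\mathbf{\Omega}C)$ described in \ref{Hochasmixedcomplex}, and these structure maps are strictly natural in the dg category, so any dg functor induces a map of the underlying cyclic dg $R$-modules and therefore commutes with $B$ on the nose.

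The second step is to prove that a quasi-equivalence $F \colon A \to A'$ of dg categories whose morphism complexes are $R$-flat (which holds for $A=\mathbf{\Omega}C$ and $A'=\mathbf{\Omega}C'$ since $C$ and $C'$ are $R$-flat by the definition of a categorical coalgebra) induces a quasi-isomorphism $\textnormal{Hoch}_\bullet(F) \colon \textnormal{Hoch}_\bullet(A) \to \textnormal{Hoch}_\bullet(A')$. This is the standard Morita/quasi-equivalence invariance of Hochschild homology. I would argue it by filtering each $\textnormal{Hoch}_\bullet$ by the length $p$ of bar monomials $[a_1\vert\cdots\vert a_p]a_{p+1}$, yielding an associated spectral sequence whose $E^1$-term is built from tensor products of morphism complexes over $A_0$ (respectively $A'_0$). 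Since $F$ induces a quasi-isomorphism on each morphism complex, and the Künneth-type $E^1$ comparison is controlled by flatness, one obtains an $E^1$-isomorphism and the convergence gives the result. One must also handle the fact that $F$ need not be a bijection on objects, only essentially surjective on $H_0$; this is dealt with by showing that $\textnormal{Hoch}_\bullet(A)$ is invariant under restricting to a full, essentially-wide dg subcategory, using that isomorphisms in $H_0(A')$ lift to units plus boundaries and produce chain homotopies between different choices of object representatives.

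The main obstacle is precisely this last point: handling the failure of $\mathbf{\Omega}f$ to be a bijection on objects. The cleanest route is to factor $\mathbf{\Omega}f$ as the inclusion of a full dg subcategory followed by an equivalence on objects, and to check both types of map induce quasi-isomorphisms on $\textnormal{Hoch}_\bullet$. The first is immediate from essential surjectivity on $H_0$ and a zig-zag of chain homotopies constructed from homotopy inverses of chosen isomorphisms; the second reduces, after fixing objects, to quasi-isomorphisms on each morphism complex, handled by the spectral sequence above. Once the top arrow is a quasi-isomorphism of mixed complexes, the commutative square and the vertical quasi-isomorphisms force $\textnormal{coHoch}_\bullet(f)$ to be a quasi-isomorphism of mixed complexes as well.
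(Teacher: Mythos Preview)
Your proposal is correct and follows essentially the same route as the paper: the paper's proof simply invokes Proposition~\ref{pimapofmixedcomplexes}, the naturality of the (co)Hochschild constructions, and the quasi-equivalence invariance of Hochschild homology for locally $R$-flat dg categories, exactly as you do via the commutative square with vertical $\overline{\pi}$'s. The only difference is that you sketch an argument for the Hochschild invariance (the spectral sequence and the essential-surjectivity step), whereas the paper treats this as a standard fact and does not reprove it.
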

\begin{proof}
This follows from Proposition \ref{pimapofmixedcomplexes} together with the naturality of the (co)Hochschild complex and the quasi-equivalence invariance of Hochschild homology for locally $R$-flat dg categories. 
\end{proof}
\begin{remark}
The map $\overline{\alpha} \colon \text{coHoch}_{\bullet}(C) \to \text{Hoch}_{\bullet}(\mathbf{\Omega}C)$, which is a chain homotopy inverse of $\overline{\pi}$, does not strictly intertwine the operators $P$ and $B$.  
\end{remark}

\begin{proposition} \label{Gmapofmixedcomplexes}
For any topological space $X$, the map 
 \[\mathfrak{G} \colon \textnormal{Hoch}_\bullet(\mathbf{\Omega}(\mathcal{C}(X))) \rightarrow C^{\Delta \times \square}_\bullet(\fancy{L}X) \] 
 defined in \ref{Gmaphoch} is a quasi-isomorphism of mixed complexes.
\end{proposition}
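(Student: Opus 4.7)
The plan is to realize $\mathfrak{G}$ as the normalized totalization of a morphism of cyclic dg $R$-modules, and then invoke the general principle (recalled in \S\ref{mixedcomplexes}) that totalization sends cyclic maps to maps of mixed complexes. By \S\ref{mixedcomplexes}--\S\ref{chainsonLX}, both mixed complexes at hand arise this way: the source from the cyclic dg $R$-module $[n]\mapsto \mathcal{Z}_n(\mathbf{\Omega}(\mathcal{C}(X)))$, and the target from the cyclic dg $R$-module $[n]\mapsto C^{\Delta\times\square}_{n,\bullet}(\mathcal{L}X)$, whose Connes operator $Q$ agrees with $\mathcal{R}$ by the previous proposition. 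Since $\mathfrak{G}$ is already a quasi-isomorphism of the underlying chain complexes by Theorem~\ref{freeloopsmodel}, the entire content of the proposition is the compatibility with the mixed operators $B$ and $\mathcal{R}$.

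First, I would unpack the composition defining $\mathfrak{G}$ in \ref{Gmaphoch} to produce, for each $n\geq 0$, a cyclic-degree-preserving chain map
\[
\mathfrak{G}_n \colon \mathcal{Z}_n(\mathbf{\Omega}(\mathcal{C}(X))) \to C^{\Delta\times\square}_{n,\bullet}(\mathcal{L}X)
\]
sending a composable string $(a_1,\dots,a_n,a_{n+1})$ to the prism $\Delta^n \times I^{|a_1|+\cdots+|a_{n+1}|} \to \mathcal{L}X$ that assigns to $(u,w)$ the loop based at the marked point of $\widetilde{\mathcal{G}}(a_1,\dots,a_n)(u,w_1,\dots,w_n)$, traversing $\Theta(a_{n+1})$ as wrap-around. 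Chain-map compatibility with internal differentials is inherited from $\widetilde{\mathcal{G}}$ and $\theta^\Delta$. Compatibility with the face operators is then routine: for $i\geq 1$, setting $u_i=0$ fuses $\gamma_i*\gamma_{i+1}$ into a single Moore path, matching $a_ia_{i+1}$; and for $i=0$, setting $u_0=0$ slides the marked point past $\gamma_1$, producing exactly the loop associated to $(a_2,\dots,a_n,a_{n+1}a_1)$. Degeneracy compatibility reduces to inserting identity paths, which is harmless after passing to normalized chains.

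The crux is the cyclic compatibility $\mathfrak{G}_n \circ t = t\circ \mathfrak{G}_n$. Using the explicit formula $t(\sigma)(u_0,\dots,u_n,v) = \sigma_{-u_0}(u_1,\dots,u_n,u_0,v)$ from \S\ref{chainsonLX}, this reduces to showing that cyclically permuting $(a_1,\dots,a_{n+1})\mapsto(a_{n+1},a_1,\dots,a_n)$ reproduces the same prism up to permuting the simplex barycentric coordinates and rotating the resulting loop by $-u_0$ in $S^1$. Unwinding the Moore parameterizations, the marked-point time $\sum_{j} l_j\big(\sum_{i\geq j} u_i\big)$ and the insertion of $\Theta(a_{n+1})$ as wrap-around transform, under the cyclic shift, by exactly the amount $u_0\,\ell(\Theta(a_1))$ modulo the total loop length --- which is the shift produced by moving $a_1$ out of the bracket across $a_{n+1}$. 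This matching of lengths is the same computation that identified $Q$ with $\mathcal{R}$ in the proposition of \S\ref{chainsonLX}, and constitutes the main obstacle: the bookkeeping is purely geometric but must be verified carefully so that the rotation parameter $-u_0$ on $S^1$ matches the reparameterization induced by the cyclic permutation at the dg-category level.

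With $\mathfrak{G}_\bullet$ established as a natural transformation $\Lambda^{\mathrm{op}}\to \mathsf{Ch}_R$, its normalized totalization is automatically a morphism of mixed complexes intertwining $(\delta,B)$ with $(\partial,\mathcal{R})$. Combined with the quasi-isomorphism of underlying chain complexes from Theorem~\ref{freeloopsmodel}, this yields the desired quasi-isomorphism of mixed complexes.
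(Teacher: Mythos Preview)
Your strategy is sound: both sides are totalizations of cyclic dg $R$-modules, and exhibiting $\mathfrak{G}$ as the totalization of a cyclic map immediately gives the mixed-complex compatibility. However, the paper proceeds differently and more economically. It factors
\[
\mathfrak{G}\;=\;\lambda^{\square}\circ \textnormal{Hoch}_\bullet(\Theta)\colon
\textnormal{Hoch}_\bullet(\mathbf{\Omega}(\mathcal{C}(X)))\longrightarrow
\textnormal{Hoch}_\bullet(\mathbf{P}^{\square}X)\longrightarrow
C^{\Delta\times\square}_\bullet(\mathcal{L}X),
\]
and observes that each factor is a quasi-isomorphism of mixed complexes for general reasons: $\textnormal{Hoch}_\bullet(\Theta)$ because the Hochschild mixed complex is functorial in dg functors and $\Theta$ is a quasi-equivalence of locally $R$-flat dg categories; and $\lambda^{\square}$ because it is obtained by applying cubical chains levelwise to Goodwillie's map $\lambda$, which is already a weak equivalence of \emph{cyclic topological spaces} (Lemma V.1.3 of \cite{Goodwillie}). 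Thus the cyclic compatibility is inherited from the space level and never needs to be checked by hand.

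Your direct route, by contrast, amounts to reproving Goodwillie's cyclic compatibility together with the Adams-map bookkeeping. Two points in your ``crux'' paragraph would need attention before this goes through. First, the cyclic operator $t$ on $\mathcal{Z}_n(\mathbf{\Omega}(\mathcal{C}(X)))$ permutes the tensor factors $a_i$, hence also permutes the blocks $I^{|a_1|}\times\cdots\times I^{|a_{n+1}|}$ of cube coordinates, whereas the cyclic operator on $C^{\Delta\times\square}_{n,\bullet}(\mathcal{L}X)$ leaves $v$ fixed; you must reconcile this block permutation with the Koszul sign carried by $t$ on the source. Second, the rotation by ``$u_0\,\ell(\Theta(a_1))$'' is not the quantity that appears on the target: $t(\sigma)=\sigma_{-u_0}$ rotates by $-u_0$ times the \emph{total} loop length, not by the length of a single segment. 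The paper's factorization sidesteps both issues entirely by pushing the cyclic verification to the level of spaces, where there are no Koszul signs and Goodwillie has already done the computation.
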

\begin{proof}
The map $\mathfrak{G} \colon \text{Hoch}_\bullet(\mathbf{\Omega}(\mathcal{C}(X))) \rightarrow C^{\Delta \times \square}_\bullet(\fancy{L}X)$ is given as the composition
\[\textnormal{Hoch}_\bullet(\mathbf{\Omega}(\mathcal{C}(X))) \xrightarrow{\textnormal{Hoch}_\bullet(\Theta)}  \textnormal{Hoch}_\bullet(\mathbf{P}^{\square}X) \xrightarrow{\lambda^{\square}}  C^{\Delta \times \square}_\bullet(\fancy{L}X), 
\]
where $\Theta \colon \mathbf{\Omega}(\mathcal{C}(X)) \to \mathbf{P}^{\square}X$ is (the many object version of) Adams' classical map and $\lambda^{\square}$ is induced by (a many-object version of) the map of cyclic topological spaces $\lambda$ defined in \cite[Section V.1]{Goodwillie} upon taking cubical chains and passing to total complexes.  In fact, in the proof of Lemma V.1.3 in \cite{Goodwillie}, it is noticed that $\lambda$ is a weak homotopy equivalence of cyclic topological spaces. For a complete calculation see \cite{tolosa-thesis}. Hence, the induced map $\lambda^{\square}$ is a quasi-isomorphism of mixed complexes. The map $\Theta \colon \mathbf{\Omega}(\mathcal{C}(X)) \to \mathbf{P}^{\square}X$ is a quasi-equivalence between two dg categories both of which have chain complexes of morphisms that are flat as graded $R$-modules. Therefore, by the quasi-equivalence invariance of the Hochschild chain complex, it follows that $\textnormal{Hoch}_\bullet(\Theta)$ is a quasi-isomorphism of mixed complexes.
\end{proof}
The following, which is Theorem \ref{thm1} in the introduction, is an immediate consequence of Propositions \ref{pimapofmixedcomplexes} and \ref{Gmapofmixedcomplexes}.
\begin{theorem} \label{maintheorem}
For any topological space $X$, there is a zig-zag of natural quasi-isomorphisms of mixed complexes
    \[
    (\textnormal{coHoch}_\bullet (\mathcal{C}(X)), d,P) \xleftarrow{\overline{\pi}} (\textnormal{Hoch}_\bullet( \mathbf{\Omega} \mathcal{C}(X)),\delta,B) \xrightarrow{\mathcal{E} \circ \mathfrak{G}} (C_\bullet(\fancy{L}X), \partial, \mathcal{R}).
    \] 
\end{theorem}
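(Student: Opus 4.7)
The plan is to assemble Theorem \ref{maintheorem} from the two compatibility results already established in the section, namely Proposition \ref{pimapofmixedcomplexes} and Proposition \ref{Gmapofmixedcomplexes}, together with the statement (recorded in \ref{chainsonLX}) that the Eilenberg--Zilber subdivision $\mathcal{E} \colon C_\bullet^{\Delta \times \square}(\fancy{L}X) \to C_\bullet(\fancy{L}X)$ is a quasi-isomorphism of mixed complexes. Concretely, the zig-zag displayed in the theorem has two legs, and each leg is proved separately.

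For the left-pointing arrow, I would simply invoke Proposition \ref{pimapofmixedcomplexes}, which asserts that $\overline{\pi} \colon \textnormal{Hoch}_\bullet(\mathbf{\Omega}C) \to \textnormal{coHoch}_\bullet(C)$ is a natural quasi-isomorphism of mixed complexes for \emph{any} categorical coalgebra $C$; specializing to $C = \mathcal{C}(X)$ gives exactly the desired arrow. The fact that $\overline{\pi}$ is a chain map and a quasi-isomorphism (at the level of chain complexes without the cyclic operators) is already supplied by Corollary \ref{hoch-cohoch}, so the only additional content is the compatibility with $B$ and $P$, which is the content of that proposition.

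For the right-pointing arrow, I would factor it as
\[
(\textnormal{Hoch}_\bullet(\mathbf{\Omega}\mathcal{C}(X)), \delta, B) \xrightarrow{\mathfrak{G}} (C_\bullet^{\Delta \times \square}(\fancy{L}X), \partial, \mathcal{R}) \xrightarrow{\mathcal{E}} (C_\bullet(\fancy{L}X), \partial, \mathcal{R}).
\]
Proposition \ref{Gmapofmixedcomplexes} handles the first map: it is the composition of $\textnormal{Hoch}_\bullet(\Theta)$, which is a quasi-isomorphism of mixed complexes by quasi-equivalence invariance of Hochschild chains together with Theorem \ref{adamsthm}, and the many-object cubical version of Goodwillie's map $\lambda^\square$, which is induced by a weak equivalence of cyclic topological spaces and hence a quasi-isomorphism of mixed complexes. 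The map $\mathcal{E}$ is the Eilenberg--Zilber subdivision; its compatibility with the rotation operator $\mathcal{R}$ follows from the formula \ref{subdivision} and standard acyclic-models naturality, and it is a quasi-isomorphism on the underlying chain complexes.

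The main obstacle, strictly speaking, is already hidden inside Proposition \ref{Gmapofmixedcomplexes}: verifying that the cubical many-object analogue of Goodwillie's $\lambda$ respects the cyclic structure on the nose (not just up to homotopy) so that one obtains a strict map of mixed complexes rather than only of chain complexes. Once this bookkeeping is in place — and the paper attributes the computation to \cite{tolosa-thesis} — the proof of Theorem \ref{maintheorem} reduces to the two-line observation that the composition $\mathcal{E} \circ \mathfrak{G}$ on the right and $\overline{\pi}$ on the left are the two quasi-isomorphisms of mixed complexes we have just named, which is precisely the zig-zag claimed.
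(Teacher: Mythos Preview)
Your proposal is correct and matches the paper's own proof, which simply states that the theorem is an immediate consequence of Propositions \ref{pimapofmixedcomplexes} and \ref{Gmapofmixedcomplexes}. You have even spelled out the role of $\mathcal{E}$ (recorded in \ref{chainsonLX}) more explicitly than the paper does, but the overall structure is identical.
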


\begin{remark} Corollary \ref{invariance} and Theorem \ref{maintheorem} imply that the coHochschild complex of any categorical coalgebra $\mathbf{\Omega}$-quasi-equivalent to $\mathcal{C}(X)$ models the singular chains on $\mathcal{L}X$ as a mixed complex. 
\end{remark}

\subsection{Cyclic homology and $S^1$-equivariant homology}
Given any mixed complex $(M,b,B)$, one may consider the graded $R$-module $M\llbracket u , u^{-1} \rrbracket$ defined as the power series ring on $u$ and $u^{-1}$, where $u$ is a formal variable of degree $2$, so that the grading on monomials is given by $\abs{mu^i}=\abs{m}+2i$. The following lemma describes a standard construction.
\begin{lemma}
If $(M,b,B)$ is a mixed complex, then $(M \llbracket u,u^{-1} \rrbracket, b+u^{-1}B)$ is a dg $R$-module. Furthermore, there is a natural isomorphism of dg $R$-modules \[M\llbracket u , u^{-1} \rrbracket \iso Tot^{\Pi}(\fancy{B}M),\] where $\fancy{B}M$ is the bi-complex associated to $(M,b,B)$ and $Tot^{\Pi}$ is the (direct product) totalization of a bi-complex, i.e. if $(C_{\bullet,\bullet}, d_{ver} ,d_{hor})$ is a bi-complex, then $Tot^{\Pi}(C)_n = \prod\limits_{p+q=n} C_{p,q}$.
\end{lemma}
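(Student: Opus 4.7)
The plan is to verify the two assertions by direct inspection, since the content is essentially bookkeeping about gradings and the three defining relations $b^2=0$, $B^2=0$, $bB+Bb=0$ of a mixed complex.

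First, I would check that $b+u^{-1}B$ is a differential of degree $-1$ on $M\llbracket u,u^{-1}\rrbracket$. The degree count is immediate: $b$ has degree $-1$ and, since $|u^{-1}|=-2$ and $|B|=+1$, the operator $u^{-1}B$ also has degree $-1$, so their sum is homogeneous of degree $-1$. Squaring and using $R$-linearity in $u$ gives
\begin{equation*}
(b+u^{-1}B)^2 = b^2 + u^{-1}(bB+Bb) + u^{-2}B^2 = 0,
\end{equation*}
by the three mixed complex relations. Thus $(M\llbracket u,u^{-1}\rrbracket, b+u^{-1}B)$ is a dg $R$-module.

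Next, I would set up the bicomplex $\fancy{B}M$ explicitly. With the conventions of the paper (so that the totalization will recover the periodic object $M\llbracket u,u^{-1}\rrbracket$), declare
\begin{equation*}
(\fancy{B}M)_{p,q} := M_{q-p}, \qquad p\in\mathbb{Z},\ q\in\mathbb{Z},
\end{equation*}
with vertical differential $d_{\mathrm{ver}}:=b\colon (\fancy{B}M)_{p,q}\to (\fancy{B}M)_{p,q-1}$ and horizontal differential $d_{\mathrm{hor}}:=B\colon (\fancy{B}M)_{p,q}\to (\fancy{B}M)_{p+1,q+1}$ (note the target: $B$ raises $q-p$ by $1$, as it must, since $|B|=+1$). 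The mixed complex relations translate exactly to $d_{\mathrm{ver}}^2=0$, $d_{\mathrm{hor}}^2=0$, and $d_{\mathrm{ver}}d_{\mathrm{hor}}+d_{\mathrm{hor}}d_{\mathrm{ver}}=0$, so $\fancy{B}M$ is a genuine bicomplex.

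Finally, I would produce the isomorphism. Define
\begin{equation*}
\Phi\colon M\llbracket u,u^{-1}\rrbracket \longrightarrow \mathrm{Tot}^{\Pi}(\fancy{B}M), \qquad \Phi\Bigl(\sum_{i\in\mathbb{Z}} m_i\, u^i\Bigr)_{p,q} := m_{-p}\ \text{if } q-p=|m_{-p}|,
\end{equation*}
sending the monomial $m\,u^i$ (with $m$ homogeneous of degree $k$, so $|mu^i|=k+2i$) to the element sitting in bidegree $(p,q)=(-i,k-i)$ of $\fancy{B}M$; this lies in the summand for total degree $p+q=k+2i$, matching the grading on $M\llbracket u,u^{-1}\rrbracket$. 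Because an element of $\mathrm{Tot}^{\Pi}$ in total degree $n$ is by definition a family indexed by all $(p,q)$ with $p+q=n$, and for each such $n$ the bidegrees $(p,q)$ with $q-p=|m|$ are in bijection with the integer $i=-p$, the map $\Phi$ is a degree-preserving bijection. A direct check shows $\Phi$ sends $b+u^{-1}B$ to $d_{\mathrm{ver}}+d_{\mathrm{hor}}$: the summand $b$ acts within each column (without changing $p$, decreasing $q$ by $1$), while $u^{-1}B$ lowers the power of $u$ by one (i.e.\ increases $p$ by $1$) and applies $B$ (increasing $q$ by $1$), matching $d_{\mathrm{hor}}$. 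This establishes the claimed natural isomorphism of dg $R$-modules.

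The only genuine subtlety, and therefore the main ``obstacle,'' is choosing the convention for $\fancy{B}M$ consistent with the paper's usage of $\mathrm{Tot}^{\Pi}$ (direct product totalization over all integer columns), so that the resulting totalization is the periodic object $M\llbracket u,u^{-1}\rrbracket$ rather than its positive or negative truncation; once the indexing is fixed as above the verifications are mechanical.
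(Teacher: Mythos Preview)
The paper does not supply a proof of this lemma---it is introduced as ``a standard construction''---so your direct verification is exactly what is called for, and the first half (that $(b+u^{-1}B)^2=0$ by the three mixed-complex relations) is correct.

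However, your bicomplex indexing is internally inconsistent. You set $(\fancy{B}M)_{p,q}=M_{q-p}$ and then claim $d_{\mathrm{hor}}=B$ maps $(p,q)$ to $(p+1,q+1)$; but with that target $q-p$ is unchanged, contradicting your own parenthetical remark that ``$B$ raises $q-p$ by $1$,'' and moreover the total degree $p+q$ would increase by $2$, so this cannot be a differential on $\mathrm{Tot}^{\Pi}$ with the grading $\mathrm{Tot}^{\Pi}(C)_n=\prod_{p+q=n}C_{p,q}$ stipulated in the lemma. Your placement of $mu^i$ (with $|m|=k$) at bidegree $(-i,k-i)$ has the same problem: then $p+q=k-2i$, not the $k+2i$ you claim. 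A consistent convention is to keep $(\fancy{B}M)_{p,q}=M_{q-p}$, take $d_{\mathrm{ver}}=b\colon(p,q)\to(p,q-1)$ and $d_{\mathrm{hor}}=B\colon(p,q)\to(p-1,q)$ (both of total degree $-1$), and place $mu^i$ at $(p,q)=(i,k+i)$; then $p+q=k+2i=|mu^i|$, the operator $b$ matches $d_{\mathrm{ver}}$, and $u^{-1}B$ sends $(i,k+i)$ to $(i-1,k+i)$, matching $d_{\mathrm{hor}}$. With these corrections your argument goes through.
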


There are eight variations of the above construction given by subcomplexes and quotients of $M\llbracket u,u^{-1} \rrbracket$ giving rise to eight versions of \textit{cyclic chains} associated to a mixed complex. Namely, one may consider
\[\begin{tikzcd}
	\textcolor{rgb,255:red,214;green,92;blue,92}{M\llbracket u,u^{-1} \rrbracket} && {M[u,u^{-1}]} \\
	{M\llbracket u,u^{-1} ]} && {M [u,u^{-1} \rrbracket} \\
	{M[u]} && \textcolor{rgb,255:red,214;green,92;blue,92}{M[u^{-1}]} \\
	\textcolor{rgb,255:red,214;green,92;blue,92}{M\llbracket u \rrbracket} && {M\llbracket u^{-1} \rrbracket}
\end{tikzcd}\]
with the induced differentials. The complexes highlighted in colors are called \textit{classical versions} of cyclic chains: $M\llbracket u , u^{-1} \rrbracket$ is called the \textit{periodic cyclic chains}, $M[u^{-1}]$ the \textit{negative cyclic chains}, and $M\llbracket u \rrbracket$ the \textit{positive cyclic chains}. The complexes $M[u]$ and $M\llbracket u \rrbracket$ are quotients of $M\llbracket u , u^{-1} \rrbracket$, while the rest are subcomplexes of $M\llbracket u , u^{-1} \rrbracket$. All versions are non-isomorphic in general. We will denote the homology of the classical versions by $M^+ , M^-$, and $M^{\infty}$, for the \textit{homology} of the positive, negative, and periodic cyclic chain complexes, respectively. If the mixed complex $(M,b,B)$ arises as the total complex of a cyclic dg $R$-module (see \ref{mixedcomplexes}) all of the eight versions above of the cyclic chains are either acyclic or quasi-isomorphic to one of the classical versions \cite{cieliebakvolkov}. For any categorical coalgebra $C$, we apply these constructions to the mixed complex $(\textnormal{Hoch}_\bullet(C), d, P)$.

\begin{definition}\label{3versions} Let $C$ be an $R$-categorical coalgebra. We call the graded $R$-modules $\textnormal{coHoch}_\bullet(C)^+$, $\textnormal{coHoch}_\bullet(C)^-$, and $\textnormal{coHoch}_\bullet(C)^{\infty}$ the \textit{positive, negative and periodic cyclic homology of} $C$, respectively. 
\end{definition}

Given a topological space $X$, the positive cyclic homology of $(C_\bullet(\fancy{L}X),\partial,\mathcal{R})$, and consequently the positive cyclic homology of $(C^{\Delta \times \square}_\bullet(\fancy{L}X),\partial,\mathcal{R})$, is naturally isomorphic to $H^{S^1}_\bullet(\fancy{L}X)$, the $S^1$-equivariant homology of the free loop space as an $S^1$-space \cite{jones1987cyclic}. The latter is defined as the singular homology of its Borel construction, i.e. \[H^{S^1}_\bullet(\fancy{L}X) = H_\bullet(ES^1 \times_{S^1} \fancy{L}X).\] The mixed complex $(C^{\Delta \times \square}_\bullet(\fancy{L}X),\partial,\mathcal{R})$ is the mixed complex of a cyclic dg $R$-module as discussed in \ref{chainsonLX}. In particular, there are only three versions of cyclic homology of $(C^{\Delta \times \square}_\bullet(\fancy{L}X),\partial,\mathcal{R})$, up to isomorphism.

\begin{theorem}
   For any topological space $X$, there are natural isomorphisms
    \begin{align}
        &\textnormal{coHoch}_\bullet(\mathcal{C}(X))^+ \iso C_\bullet(\fancy{L}X) ^+ \\
        & \textnormal{coHoch}_\bullet(\mathcal{C}(X))^- \iso  C_\bullet(\fancy{L}X)^-,\\
        & \textnormal{coHoch}_\bullet(\mathcal{C}(X))^{\infty} \iso C_\bullet(\fancy{L}X)^{\infty} .
    \end{align}
    In particular, the first isomorphism above gives rise to a natural isomorphism between the positive cyclic homology of the categorical coalgebra $\mathcal{C}(X)$ of singular chains on $X$ and the $S^1$-equivariant homology of $\mathcal{L}X$.
\end{theorem}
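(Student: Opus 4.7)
The plan is to deduce this theorem directly from Theorem \ref{maintheorem}, which supplies a zig-zag of natural quasi-isomorphisms of mixed complexes
\[
(\textnormal{coHoch}_\bullet (\mathcal{C}(X)), d,P) \xleftarrow{\overline{\pi}} (\textnormal{Hoch}_\bullet( \mathbf{\Omega} \mathcal{C}(X)),\delta,B) \xrightarrow{\mathcal{E} \circ \mathfrak{G}} (C_\bullet(\fancy{L}X), \partial, \mathcal{R}),
\]
together with the general fact that each of the three classical cyclic chain constructions $M \mapsto M^+, M^-, M^{\infty}$ sends quasi-isomorphisms of mixed complexes to quasi-isomorphisms of dg $R$-modules. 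Once this functoriality is in hand, applying it to the zig-zag above produces the three desired natural isomorphisms on homology.

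The first step is therefore to verify the invariance statement: if $f \colon (M,b,B) \to (M',b',B')$ is a quasi-isomorphism of mixed complexes, then the induced maps on $M[u^{-1}] \to M'[u^{-1}]$, on $M\llbracket u \rrbracket \to M'\llbracket u \rrbracket$, and on $M\llbracket u,u^{-1} \rrbracket \to M'\llbracket u,u^{-1} \rrbracket$ are all quasi-isomorphisms. For the positive and negative versions, this is a standard spectral sequence argument: the $u$-adic filtration yields a convergent spectral sequence whose $E^1$ page is built from $H_\bullet(M,b)$, so a quasi-isomorphism at the level of $(M,b)$ induces an isomorphism on $E^1$ and hence, by the Eilenberg-Moore comparison theorem, on the abutment. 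For the periodic case one uses the direct product totalization, and convergence is ensured by the standard Mittag-Leffler condition on the (complete, exhaustive) filtration by powers of $u$; see \cite{loday2013cyclic} or \cite{cieliebakvolkov} for details.

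The second step is to apply this invariance to each of the two quasi-isomorphisms in the zig-zag of Theorem \ref{maintheorem}. This yields isomorphisms
\[
\textnormal{coHoch}_\bullet(\mathcal{C}(X))^\star \xleftarrow{\cong} \textnormal{Hoch}_\bullet(\mathbf{\Omega}\mathcal{C}(X))^\star \xrightarrow{\cong} C_\bullet(\mathcal{L}X)^\star
\]
for $\star \in \{+,-,\infty\}$, which are moreover natural in $X$ by naturality of $\overline{\pi}$ and $\mathcal{E}\circ \mathfrak{G}$.

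The third and final step is to identify $C_\bullet(\mathcal{L}X)^+$ with $H^{S^1}_\bullet(\mathcal{L}X;R)$. Since $(C_\bullet(\mathcal{L}X),\partial,\mathcal{R})$ is the normalized total complex of the cyclic singular set of $\mathcal{L}X$ described in \ref{chainsonLX} (with Connes' $B$ operator agreeing with the rotation operator $\mathcal{R}$ by the proposition proved in \ref{chainsonLX}), this is precisely Jones' theorem \cite{jones1987cyclic} identifying the positive cyclic homology of a cyclic space with the equivariant homology of its Borel construction, so $C_\bullet(\mathcal{L}X)^+ \cong H_\bullet(ES^1 \times_{S^1} \mathcal{L}X)$. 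Combining this identification with the isomorphism $\textnormal{coHoch}_\bullet(\mathcal{C}(X))^+ \cong C_\bullet(\mathcal{L}X)^+$ from the second step gives the last claim of the theorem.

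The main technical obstacle is the invariance claim for the \emph{periodic} version, since $M\llbracket u,u^{-1} \rrbracket$ is a direct product totalization and a naive spectral sequence argument does not converge without an auxiliary boundedness or completeness hypothesis. The safe way to handle this is to verify that the $u$-adic filtration is complete, Hausdorff, and exhaustive on both sides, and that the induced map on associated graded pieces (which is just a shifted copy of $f$) is a quasi-isomorphism; then the comparison theorem for complete filtrations applies. The remaining compatibilities, including the naturality of the three isomorphisms in $X$, are then routine and follow immediately from the naturality of every map in the zig-zag of Theorem \ref{maintheorem} and of the totalization functors $(-)^+, (-)^-, (-)^\infty$.
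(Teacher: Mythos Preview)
Your proposal is correct and follows essentially the same approach as the paper's own proof: deduce the result from Theorem \ref{maintheorem} together with the fact that quasi-isomorphisms of mixed complexes induce quasi-isomorphisms on each classical version of cyclic chains, and then invoke Jones' theorem for the identification with $S^1$-equivariant homology. You supply more detail than the paper does (in particular the spectral sequence argument and the care taken with the periodic case), but the strategy is identical.
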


\begin{proof}
This follows directly from Theorem \ref{maintheorem} since quasi-isomorphisms of mixed complexes induce quasi-isomorphisms of dg $R$-modules after passing to any of the classical versions of the cyclic chain complexes. Furthermore, the positive cyclic homology of $(C_\bullet (\mathcal{L}X), \partial, \mathcal{R})$ is the $S^1$-equivariant homology of $\mathcal{L}X$. 
\end{proof}

\printbibliography

\end{document}